\subjclass{2020 Mathematics Subject Classification: 
Primary 30F15, 30F60; Secondary 30C62, 30C35}
\title{Weil--Petersson curves and Dirichlet finite harmonic functions \\on Riemann surfaces}
\author{%
\name{Katsuhiko \surname{MATSUZAKI}}
\CAE{matsuzak@waseda.jp} 
}
\address{Shinjuku, Tokyo 169-8050, Japan}}
\begin{document}
\maketitle

\section{Introduction}

Suppose a closed Jordan curve $\gamma$ on a Riemann surface $R$ divides it into two subsurfaces, $R_1$ and $R_2$. 
In this setting, we can study the correspondence between harmonic functions on $R_1$ and $R_2$ that share the same boundary values on $\gamma$, 
provided that $\gamma$ has sufficient regularity. Schippers and Staubach \cite{SS,SS20,SS21} and Shiga \cite{Shiga} investigated this problem, 
focusing on the case where $\gamma$ is a quasicircle and the harmonic functions have finite Dirichlet integrals. When $\gamma$ is a quasicircle, 
the condition of coinciding boundary values on $\gamma$ can be directly formulated using non-tangential limits. 
However, as closely examined in \cite[Theorem 2.4]{SS0}, 
this can be interpreted indirectly via conformal mappings and translate this concept into the correspondence under the composition operator acting on a Hilbert space of boundary functions on the unit circle.
We adapt this definition in this paper.

There are several differences in our approach compared to previous studies.  
First, we assume a higher degree of regularity for $\gamma$.  
A Weil--Petersson curve in the plane is a rectifiable Jordan curve that arises as the image of a specific embedding of the unit circle.  
This embedding can be described in terms of the universal Teich\-m\"ul\-ler space, particularly its subspace associated with integrable Beltrami coefficients (the integrable Teich\-m\"ul\-ler space).  
Bishop \cite{Bi, Bi1} conducted a comprehensive study on Weil--Petersson curves, exploring their connections to various mathematical fields.  
A Weil--Petersson curve can also be defined on a Riemann surface.  
Due to its geometric properties, it is possible to define the boundary function space on $\gamma$,  
which allows us to directly describe the relationship between Dirichlet finite harmonic functions on the complementary domains $R_1$ and $R_2$ of $\gamma$ on a Riemann surface $R$.  
Although this approach is feasible, we opt for a more general framework that includes the case of quasicircles in parallel.  
This approach is only used for presenting a recent result by Wei and Zinsmeister \cite{WZ,WZp}
concerning the characterization of chord-arc curves by the extension of
Dirichlet finite harmonic functions.

Second, we generalize the exponent of the Dirichlet integral from $2$ to an arbitrary $p>1$.  
This generalization aligns with the extension of the integrability exponent of Beltrami coefficients in the integrable Teich\-m\"ul\-ler space.  
The classical theory primarily considers the case $p=2$, where a $2$-Weil--Petersson curve is associated with the $2$-integrable Teich\-m\"ul\-ler space $T_2$.  
By introducing the $p$-integrable Teich\-m\"ul\-ler space $T_p$ for $p>1$,  
we define the notion of a $p$-Weil--Petersson curve, as discussed in \cite{WM-4}.  
For such a curve $\gamma$, it is natural to consider $p$-Dirichlet finite harmonic functions on $R_1$ and $R_2$.  
In this framework, the space of boundary functions on $\gamma$ corresponds to the homogeneous $p$-Besov space $B_p(\mathbb{S})$ on the circle,  
which is well studied in real analysis and offers additional scale parameters $B^s_{p,q}$.  
Let ${\rm HD}_p(R_i)$ denote the space of $p$-Dirichlet finite harmonic functions on $R_i$ $(i=1,2)$.  
After establishing the isomorphism between ${\rm HD}_p(R_i)$ and $B_p(\mathbb{S})$ as complex Banach spaces,  
the transmission operator $\Theta_\gamma: {\rm HD}_p(R_1) \to {\rm HD}_p(R_2)$,  
which encodes the correspondence of harmonic functions with the same boundary values,  
can be realized as an automorphism of $B_p(\mathbb{S})$.

Although we assume that $\gamma$ is a Weil--Petersson curve, most of the results in this paper remain valid even when $\gamma$ is merely a quasicircle,  
as in previous studies.  
In this sense, our results extend some of the known theorems to the setting of $p$-Dirichlet finite harmonic functions,  
by incorporating the $p$-Besov space of boundary functions.  
An exception arises in the final subsection,  
where we analyze the variation of transmission operators under deformations of $R$  
via certain quasiconformal homeomorphisms associated with the $p$-integrable Teich\-m\"ul\-ler space.  
Suppose a sequence of pairs $(R_n,\gamma_n)$ converges to $(R_0,\gamma_0)$ in this Teich\-m\"ul\-ler space.  
We represent the corresponding sequence of transmission operators  
$\Theta_{\gamma_n}: {\rm HD}_p((R_1)_n) \to {\rm HD}_p((R_2)_n)$  
as a sequence of Banach space automorphisms $C(R_n,\gamma_n)$ of $B_p(\mathbb{S})$.  
We then prove that  
$C(R_n,\gamma_n)$ converges strongly to $C(R_0,\gamma_0)$ as $n \to \infty$.

%\medskip
%\noindent
%{\it Acknowledgements.} The author is grateful to the referee for drawing his attention to related previous studies, whose remarks were very helpful in preparing the revised version of this paper.

\section{Planar and square integrable case}\label{2}

In this section, we review our problems in the simplest setting of 
Dirichlet finite harmonic functions on planar domains, and motivate us to further generalization.

\subsection{The space of Dirichlet finite harmonic functions}
The space of complex-valued Dirichlet finite harmonic functions $\Phi$ on a planar domain $\Omega$ 
in the extended complex plane $\widehat{\mathbb C}$ is given as
\begin{align*}
{\rm HD}_2(\Omega)=\{\Phi:{\rm harmonic}\ {\rm on}\ \Omega \mid \Vert \Phi \Vert_{D_2}<\infty\}, \quad
\Vert \Phi \Vert_{D_2}=\left(\int_{\Omega}(|\Phi_x|^2+|\Phi_y|^2)dxdy\right)^{1/2}.
\end{align*}
By ignoring the difference in constant functions, we regard ${\rm HD}_2(\Omega)$ as
a complex Hilbert space.

Let $\mathbb D=\{|z|<1\}$ be the unit disk and $\mathbb D^*=\{|z|>1\} \cup\{\infty\}$ its exterior.
We consider {the correspondence between ${\rm HD}_2(\mathbb D)$ and ${\rm HD}_2(\mathbb D^*)$} across the unit circle
$\mathbb S=\{|z|=1\}$. 
The conformal reflection $\lambda(z)=z^*:=(\bar z)^{-1}$ gives directly the correspondence
$$
{\rm HD}_2(\mathbb D) \ni \Phi \longleftrightarrow \Phi^*=\Phi \circ \lambda \in {\rm HD}_2(\mathbb D^*).
$$

In addition, through the boundary values on $\mathbb S$, we can also represent this correspondence.
The space of boundary functions on $\mathbb S$ is the (homogeneous) Besov space or the square integrable Sobolev space of 
half-order differentiability:
\begin{align*}
H^{1/2}(\mathbb S)={B_2(\mathbb S)}=\{\phi \in L_2(\mathbb S) \mid \Vert \phi \Vert_{B_2}<\infty\},\quad
\Vert \phi \Vert_{B_2}=\left(\int_{\mathbb S} \int_{\mathbb S} \frac{|\phi(x)-\phi(y)|^2}{|x-y|^2}dxdy \right)^{1/2}.
\end{align*}
This is also regarded as a complex Hilbert space by ignoring the difference in constant functions.

Every function $\Phi \in {\rm HD}_2(\mathbb D)$ has non-tangential limits almost everywhere on $\mathbb S$,
and this boundary function, denoted by $E(\Phi)$, belongs to $B_2(\mathbb S)$. 
In fact, non-tangential limits exist except on a set of logarithmic capacity zero by the Beurling theorem
(see \cite[Theorem 2.1]{SS0} for an exposition on this fact).
Conversely,
for every $\phi \in B_2(\mathbb S)$, its Poisson integral $P(\phi)$ given by
$$
P(\phi)(z)=\frac{1}{2\pi}\int_{\mathbb S}\frac{1-|\zeta|^2}{|\zeta-z|^2}\phi(\zeta)|d\zeta| 
\qquad (z \in \mathbb D)
$$
belongs to ${\rm HD}_2(\mathbb D)$. Then, the Douglas formula (see \cite[Theorem 2-5]{Ah}) implies the following.

\begin{proposition}\label{Douglas}
The boundary extension operator $E: {\rm HD}_2(\mathbb D) \to B_2(\mathbb S)$ and the Poisson integral operator
$P:B_2(\mathbb S) \to {\rm HD}_2(\mathbb D)$ are isometric linear isomorphisms up to a constant multiple with $P^{-1}=E$. 
\end{proposition}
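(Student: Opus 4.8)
The plan is to verify the three assertions in turn: linearity, the mutual-inverse relation $P^{-1}=E$, and the isometry up to a fixed constant. Linearity of both $E$ and $P$ is immediate from their definitions, and the preceding discussion already supplies well-definedness: Beurling's theorem guarantees that $E(\Phi)\in B_2(\mathbb S)$ for every $\Phi\in {\rm HD}_2(\mathbb D)$, while the Poisson integral $P(\phi)$ lies in ${\rm HD}_2(\mathbb D)$ for every $\phi\in B_2(\mathbb S)$. Throughout I would keep track of the fact that both spaces are taken modulo constants, so all identities below are understood up to an additive constant.

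For the relation $E\circ P=\mathrm{id}$, I would invoke Fatou's theorem: since $B_2(\mathbb S)\subset L_2(\mathbb S)$, the Poisson integral of $\phi\in B_2(\mathbb S)$ has non-tangential limits equal to $\phi$ almost everywhere on $\mathbb S$, so that $E(P(\phi))=\phi$. The reverse relation $P\circ E=\mathrm{id}$ is the substantive point. Here I would pass to Fourier series: writing $\Phi(re^{i\theta})=\sum_{n} a_n r^{|n|}e^{in\theta}$ for $\Phi\in {\rm HD}_2(\mathbb D)$, the boundary function is $E(\Phi)=\sum_n a_n e^{in\theta}$ and its Poisson integral reproduces $\sum_n a_n r^{|n|}e^{in\theta}=\Phi$. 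The justification that the radial boundary values indeed recover the Fourier coefficients, and that the resulting series represents $\Phi$, is exactly where finite Dirichlet energy is used: a direct computation gives $\Vert\Phi\Vert_{D_2}^2=2\pi\sum_n |n|\,|a_n|^2$, so Dirichlet finiteness forces $\sum_n |n|\,|a_n|^2<\infty$ and legitimizes the term-by-term manipulations and the identification of boundary data.

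The isometry up to a constant then follows from the Douglas formula. Applied to $\Phi=P(\phi)$, it identifies the Dirichlet integral of the harmonic extension with a fixed constant multiple of the double integral defining $\Vert\phi\Vert_{B_2}$; equivalently, by Parseval both norms are constant multiples of $\sum_n |n|\,|a_n|^2$, whence $\Vert P(\phi)\Vert_{D_2}=c\,\Vert\phi\Vert_{B_2}$ and $\Vert E(\Phi)\Vert_{B_2}=c^{-1}\Vert\Phi\Vert_{D_2}$ for a single universal constant $c$. Combined with the two inverse relations, this shows that $E$ and $P$ are mutually inverse linear isomorphisms that are isometric after rescaling.

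I expect the main obstacle to be the reproduction property $P\circ E=\mathrm{id}$ rather than the isometry. Establishing that a Dirichlet-finite, hence possibly unbounded, harmonic function on $\mathbb D$ equals the Poisson integral of its non-tangential boundary values requires controlling the behaviour of $\Phi$ near $\mathbb S$; the clean route is to show that finite Dirichlet energy, i.e. the summability $\sum_n|n|\,|a_n|^2<\infty$, forces enough convergence to identify the coefficients of $E(\Phi)$ with those of $\Phi$, while carefully discarding the constant term to respect the modulo-constants convention in both spaces.
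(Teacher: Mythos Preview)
Your argument is correct. The paper does not actually supply a proof of this proposition: it is stated as an immediate consequence of the Douglas formula, with a citation to Ahlfors' \emph{Conformal Invariants}, after the well-definedness of $E$ and $P$ has been noted via Beurling's theorem. Your Fourier-coefficient computation $\Vert\Phi\Vert_{D_2}^2=2\pi\sum_n |n|\,|a_n|^2$, the use of Fatou's theorem for $E\circ P=\mathrm{id}$, and the appeal to Douglas/Parseval for the isometry constant are exactly the standard details behind that citation, so there is nothing to compare.
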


\subsection{The correspondence across a quasicircle}
We consider the spaces of Dirichlet finite harmonic functions on the complementary domains
divided by a quasicircle.
A closed Jordan curve $\Gamma$ in the complex plane $\mathbb C$ is called a {\em quasicircle} if
it is the image of $\mathbb S$ under some quasiconformal self-homeomorphism $G:\mathbb C \to \mathbb C$. 
If there exists such $G$ with the maximal dilatation not greater than $K \geq 1$, then $\Gamma$ is called
$K$-quasicircle.

\begin{proposition}\label{quasicircle}
A closed Jordan curve $\Gamma$ in $\mathbb C$ is a quasicircle if and only if there exists a constant $C \geq 1$
such that ${\rm diam}(\widetilde{z_1z_2}) \leq C|z_1-z_2|$ for any $z_1, z_2 \in \Gamma$, where
$\widetilde{z_1z_2}$ is the smaller arc in $\Gamma$ between $z_1$ and $z_2$.
\end{proposition}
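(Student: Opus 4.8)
The statement is the classical Ahlfors characterization of quasicircles by the bounded-turning (arc--chord) condition, and the plan is to prove the two implications separately: the quasisymmetry of plane quasiconformal maps handles the forward direction, while a conformal welding argument handles the converse.

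\emph{Forward direction.} First I would reduce to a normalized $K$-quasiconformal homeomorphism $G:\mathbb{C}\to\mathbb{C}$ with $G(\mathbb{S})=\Gamma$, post-composing with a M\"obius transformation so that $\infty$ is fixed. The crucial input is that such a $G$ is $\eta$-quasisymmetric with $\eta$ depending only on $K$ (by Mori's theorem and the distortion theorem): there is an increasing homeomorphism $\eta$ with $|a-c|\le t\,|a-b|$ implying $|G(a)-G(c)|\le \eta(t)\,|G(a)-G(b)|$. Given $z_1,z_2\in\Gamma$ with preimages $\zeta_1,\zeta_2\in\mathbb{S}$, every point $\zeta$ on the shorter arc $A$ of $\mathbb{S}$ between $\zeta_1$ and $\zeta_2$ satisfies $|\zeta_1-\zeta|\le|\zeta_1-\zeta_2|$, since that arc subtends an angle at most $\pi$ and the chord length is monotone. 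Hence $|z_1-G(\zeta)|\le\eta(1)\,|z_1-z_2|$, and applying this to the endpoints of a diameter-realizing pair on $G(A)$ bounds $\mathrm{diam}\,G(A)\le 2\eta(1)\,|z_1-z_2|$. Since the smaller arc $\widetilde{z_1z_2}$ has diameter no larger than that of $G(A)$, the asserted condition follows with $C=2\eta(1)$.

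\emph{Reverse direction.} Let $\Omega_1,\Omega_2$ be the two Jordan domains bounded by $\Gamma$, and let $f:\mathbb{D}\to\Omega_1$ and $F:\mathbb{D}^*\to\Omega_2$ be Riemann maps, which by Carath\'eodory's theorem extend to homeomorphisms of the closures because $\Gamma$ is a Jordan curve. The central claim is that the welding homeomorphism $h=F^{-1}\circ f|_{\mathbb{S}}:\mathbb{S}\to\mathbb{S}$ is quasisymmetric, i.e. satisfies the Beurling--Ahlfors three-point condition on the circle. To prove this I would estimate the conformal moduli of quadrilaterals abutting $\Gamma$ from the two sides: the bounded-turning hypothesis prevents the curve from folding back on itself, so it yields two-sided control of the extremal length of curve families joining prescribed boundary subarcs, and comparing these estimates computed in $\Omega_1$ and in $\Omega_2$ gives the comparability of the images $h(I_1),h(I_2)$ of adjacent equal arcs.

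Once $h$ is known to be quasisymmetric, the Beurling--Ahlfors extension theorem furnishes a quasiconformal self-map $\psi$ of $\mathbb{D}^*$ with $\psi|_{\mathbb{S}}=h$. Setting $G=f$ on $\overline{\mathbb{D}}$ and $G=F\circ\psi$ on $\mathbb{D}^*$ defines a homeomorphism of $\mathbb{C}$ that is conformal on $\mathbb{D}$, quasiconformal on $\mathbb{D}^*$, and continuous across $\mathbb{S}$ because $F\circ\psi|_{\mathbb{S}}=F\circ h=f|_{\mathbb{S}}$. As $\mathbb{S}$ is removable for quasiconformality, $G$ is globally quasiconformal with $G(\mathbb{S})=f(\mathbb{S})=\Gamma$, so $\Gamma$ is a quasicircle. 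I expect the main obstacle to be precisely the modulus estimate in this direction: converting the purely metric bounded-turning condition into quantitative control of the conformal welding requires the careful extremal-length comparison sketched above, whereas the remaining steps are standard normalizations or direct appeals to the Mori, Carath\'eodory, and Beurling--Ahlfors theorems.
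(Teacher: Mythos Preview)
The paper does not supply a proof of this proposition at all; it is quoted as the classical Ahlfors bounded-turning characterization and used only as background, so there is nothing in the paper to compare your argument against.

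On the merits of your outline: the forward direction is essentially complete and correct --- the global quasisymmetry of a normalized planar $K$-quasiconformal map gives the diameter bound exactly as you describe, and your remark that the ``smaller'' arc has diameter at most that of $G(A)$ disposes of the ambiguity about which arc is which.

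The reverse direction, however, is only a strategy, not a proof. Showing directly that the conformal welding $h=F^{-1}\circ f|_{\mathbb S}$ is quasisymmetric from the bounded-turning condition requires exactly the kind of two-sided extremal-length estimate you allude to, and this is where the entire content of the theorem lives; your paragraph does not indicate how the metric hypothesis actually controls the relevant moduli, and in practice this step is as hard as the theorem itself. You correctly flag it as the main obstacle, but as written it is a genuine gap. It is also worth noting that the standard proofs (Ahlfors' original argument in \emph{Lectures on Quasiconformal Mappings}, or the treatments in Lehto--Virtanen and in Gehring's survey on quasidisks) do not pass through conformal welding in this direction: they build a quasiconformal reflection across $\Gamma$ directly from the three-point condition, which avoids the modulus comparison entirely. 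If you want to complete your welding approach, you would effectively need to prove that bounded turning implies both complementary domains are John (or linearly locally connected), and then invoke the known equivalence of those conditions with quasisymmetry of the welding --- at which point you are citing results of comparable depth to the proposition itself.
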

\smallskip

Let $\Omega_1$ and $\Omega_2$ be simply connected domains in $\widehat{\mathbb C}$
divided by a closed Jordan curve $\Gamma$.
We consider the correspondence between ${\rm HD}_2(\Omega_1)$ and ${\rm HD}_2(\Omega_2)$.
To this end, generalizing the conformal reflection with respect to $\mathbb S$,
we define quasiconformal reflection.
For a quasicircle $\Gamma$,
an anti-quasiconformal self-homeomorphism $\lambda:\widehat{\mathbb C} \to \widehat{\mathbb C}$ is
called a {\em quasiconformal reflection} with respect to $\Gamma$ if $\lambda \circ \lambda={\rm id}$ and
$\lambda|_\Gamma={\rm id}|_\Gamma$.
Any $K$-quasicircle admits a $K^2$-quasiconformal reflection.

For $\Phi_1 \in {\rm HD}_2(\Omega_1)$, $\Phi_1 \circ \lambda$ is a Dirichlet finite Tonelli function on $\Omega_2$.
A Tonelli function is a continuous function with ACL property whose partial derivatives are
locally square integrable (see \cite[p.147]{SN}).
If $\lambda$ is a $K$-quasiconformal reflection, then
\begin{equation}\label{Kqc}
\Vert \Phi_1 \circ \lambda \Vert_{D_2} \leq K^{1/2} \Vert \Phi_1 \Vert_{D_2}.
\end{equation}
{The Dirichlet principle} implies that there exists a unique minimizer $\Phi_2$ of 
$\Vert \cdot \Vert_{D_2}$, which is harmonic,
among all Tonelli functions having the same boundary value as $\Phi_1 \circ \lambda$.
We note that the boundary value of $\Phi_1 \circ \lambda$ is defined on $\Gamma$ 
almost everywhere with respect to the harmonic measure of $\Omega_2$ as explained in the next subsection.
This gives a bounded linear operator 
$\Theta_\Gamma:{\rm HD}_2(\Omega_1) \to {\rm HD}_2(\Omega_2)$ defined by $\Phi_1 \mapsto \Phi_2$.
By considering the converse correspondence ${\rm HD}_2(\Omega_2) \to {\rm HD}_2(\Omega_1)$, we see the following.

\begin{proposition}
For any $K$-quasicircle $\Gamma$, the operator
$\Theta_\Gamma:{\rm HD}_2(\Omega_1) \to {\rm HD}_2(\Omega_2)$ is a Hilbert isomorphism with 
$\Vert \Theta_\Gamma \Vert \leq K$.
\end{proposition}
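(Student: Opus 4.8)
The plan is to prove the norm estimate first and then establish bijectivity by producing an explicit inverse, relying throughout on the minimizing property in the Dirichlet principle and on the fact that $\lambda$ fixes $\Gamma$ pointwise.

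For the bound $\Vert \Theta_\Gamma \Vert \le K$, I would use that a $K$-quasicircle admits a $K^2$-quasiconformal reflection $\lambda$, and apply the estimate \eqref{Kqc} with dilatation $K^2$ in place of $K$, so that $\Vert \Phi_1 \circ \lambda \Vert_{D_2} \le (K^2)^{1/2}\Vert \Phi_1 \Vert_{D_2} = K\Vert \Phi_1 \Vert_{D_2}$ for every $\Phi_1 \in {\rm HD}_2(\Omega_1)$. Since $\Phi_2 = \Theta_\Gamma \Phi_1$ is by definition the Dirichlet minimizer among all Tonelli functions on $\Omega_2$ sharing the boundary value of $\Phi_1 \circ \lambda$, and $\Phi_1 \circ \lambda$ is itself one such competitor, minimality gives $\Vert \Phi_2 \Vert_{D_2} \le \Vert \Phi_1 \circ \lambda \Vert_{D_2} \le K\Vert \Phi_1 \Vert_{D_2}$. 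This yields $\Vert \Theta_\Gamma \Vert \le K$. Linearity, already noted in the construction, follows because taking boundary values and solving the Dirichlet minimization are each linear in the data, the minimizer being unique.

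To show $\Theta_\Gamma$ is an isomorphism, I would construct the converse operator $\Theta_\Gamma' : {\rm HD}_2(\Omega_2) \to {\rm HD}_2(\Omega_1)$ by the identical recipe, using the same involutive reflection $\lambda$ now carrying $\Omega_2$ onto $\Omega_1$; the argument above gives $\Vert \Theta_\Gamma' \Vert \le K$ as well. The crucial observation is that $\lambda|_\Gamma = {\rm id}$ forces the boundary value of $\Phi_1 \circ \lambda$ on $\Gamma$ to coincide with that of $\Phi_1$, so $\Phi_2 = \Theta_\Gamma \Phi_1$ is the harmonic function on $\Omega_2$ with the same boundary values as $\Phi_1$. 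Applying $\Theta_\Gamma'$ then produces the harmonic Dirichlet minimizer on $\Omega_1$ whose boundary value equals that of $\Phi_2$, hence that of $\Phi_1$; but $\Phi_1$ is already harmonic with exactly this boundary value, and by the uniqueness in the Dirichlet principle it is the minimizer, so $\Theta_\Gamma' \circ \Theta_\Gamma = {\rm id}$. Interchanging the roles of $\Omega_1$ and $\Omega_2$ gives $\Theta_\Gamma \circ \Theta_\Gamma' = {\rm id}$, so $\Theta_\Gamma$ is a bounded linear bijection with bounded inverse, that is, a Hilbert isomorphism.

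The main obstacle is the bookkeeping of boundary values. One must check that the non-tangential limits of $\Phi_1 \circ \lambda$ on $\Gamma$, taken with respect to the harmonic measure of $\Omega_2$, really agree almost everywhere with those of $\Phi_1$, taken with respect to the harmonic measure of $\Omega_1$, and that the relation ``same boundary value almost everywhere'' is symmetric across the two sides. This rests on the mutual absolute continuity of the two harmonic measures on a quasicircle and on the preservation of the relevant null sets under the quasiconformal reflection. I would make this precise by transporting everything to $\mathbb S$ through the conformal maps of $\Omega_1$ and $\Omega_2$, as indicated in the introduction, so that equality of boundary data becomes equality almost everywhere on $\mathbb S$ of the associated Besov functions; the identity $\Theta_\Gamma' \circ \Theta_\Gamma = {\rm id}$ is then immediate once the Dirichlet principle supplies uniqueness of the harmonic representative.
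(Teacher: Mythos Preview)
Your proposal is correct and follows essentially the same approach as the paper: the paper derives the bound $\Vert \Theta_\Gamma \Vert \le K$ from the $K^2$-quasiconformal reflection and estimate \eqref{Kqc} together with the minimizing property of the Dirichlet principle, and then simply remarks that ``considering the converse correspondence ${\rm HD}_2(\Omega_2) \to {\rm HD}_2(\Omega_1)$'' yields the isomorphism. Your write-up spells out this converse step in more detail, in particular the role of $\lambda|_\Gamma = {\rm id}$ and uniqueness of the harmonic minimizer, but the underlying argument is the same.
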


\subsection{The composition operator}
We examine the correspondence between ${\rm HD}_2(\Omega_1)$ and ${\rm HD}_2(\Omega_2)$
through boundary functions on the quasicircle $\Gamma$. In fact, the extension to $\Gamma$
of the functions in ${\rm HD}_2(\Omega_1)$ and ${\rm HD}_2(\Omega_2)$
can be
understood as the functions in ${\rm HD}_2(\mathbb D)$ and ${\rm HD}_2(\mathbb D^*)$ to $\mathbb S$
by using the normalized Riemann mappings $F_1:\mathbb D \to \Omega_1$ and $F_2:\mathbb D^* \to \Omega_2$. 
The normalization is given by fixing three boundary points $1$, $i$, and $-i$. Hence, $\Gamma$ is also assumed to
pass through these points.
Moreover, in this situation,
we take the normalized quasisymmetric homeomorphism $h_\Gamma:\mathbb S \to \mathbb S$ as $F_1^{-1} \circ F_2|_{\mathbb S}$.
This is uniquely determined 
by a given $K$-quasicircle $\Gamma$ so that the quasisymmetry constant of $h$ depends only on $K$. 
Even if $\Gamma$ is merely a closed Jordan curve not necessarily a quasicircle, we have that $h_\Gamma$ is a homeomorphism.
Conversely, for any normalized quasisymmetric homeomorphism $h$ of $\mathbb S$, 
the conformal welding yields a quasicircle $\Gamma$ for which $h=h_\Gamma$.

Now, applying these operations,
we have the following sequence composing $\Theta_\Gamma$ from ${\rm HD}_2(\Omega_1)$ to ${\rm HD}_2(\Omega_2)$:
\begin{align}\label{compose}
\Theta_\Gamma:\ &{\rm HD}_2(\Omega_1) \overset{(F_1)^*}{\longrightarrow} {\rm HD}_2(\mathbb D) \overset{E_1}{\longrightarrow} B_2(\mathbb S)
\overset{h_\Gamma^*}{\longrightarrow} B_2(\mathbb S) \overset{P_2}{\longrightarrow} {\rm HD}_2(\mathbb D^*)
\overset{(F_2)_*}{\longrightarrow} {\rm HD}_2(\Omega_2).
\end{align}
Here, $E_1$ stands for the non-tangential limit, $P_2$ the Poisson integral, and $\ast$ indicates the pull-back or the push-forward
by the prescribed mappings. The fact that $h_\Gamma^*$ preserves $B_2(\mathbb S)$ is discussed below.

\begin{definition}
For a quasicircle $\Gamma$ and its complementary domains $\Omega_1$ and $\Omega_2$ in $\widehat{\mathbb C}$,
the operator $\Theta_\Gamma:{\rm HD}_2(\Omega_1) \to {\rm HD}_2(\Omega_2)$ defined by \eqref{compose} is
called the {\em transmission operator} across $\Gamma$.
\end{definition}
\smallskip

For an orientation-preserving homeomorphism $h:\mathbb S \to \mathbb S$ in general,
$C_h(\phi)=h^*(\phi)=\phi \circ h$ for a function $\phi$ on $\mathbb S$ is called the {\em composition operator} induced by $h$.
When $\Gamma$ is a quasicircle, the quasisymmetric homeomorphism $h_\Gamma$ 
preserves sets of logarithmic capacity zero though $h_\Gamma$ is not necessarily absolutely continuous.
Since the boundary values of $\Phi \in {\rm HD}_2(\mathbb D)$ are defined by non-tangential limits except on a set of
logarithmic capacity zero, the coincidence of the boundary extensions of harmonic functions in ${\rm HD}_2(\Omega_1)$ and 
${\rm HD}_2(\Omega_2)$
can be formulated by the correspondence under the composition operator $C_{h_\Gamma}$. See \cite[Theorem 2.4]{SS0}.

\smallskip
\begin{remark}
In the preceding work \cite{SS0, SS, SS20, SS21, SS1} and \cite{Shiga}, the transmission is defined by
the coincidence of the non-tangential limits on $\Gamma$ in the above sense for the functions in 
${\rm HD}_2(\Omega_1)$ and ${\rm HD}_2(\Omega_2)$. However, we define this by the correspondence under
the composition operator defined by $\Gamma$. As mentioned above, our definition is compatible with theirs.
In the sequel, we use an expression ``the same boundary values
on $\Gamma$''
for the functions in 
${\rm HD}_2(\Omega_1)$ and ${\rm HD}_2(\Omega_2)$ or for those in more generalized spaces
meaning that they correspond under the composition operator on $\mathbb S$.
In particular, this convention makes it possible that
for a $p$-Dirichlet finite harmonic function $\Phi$ discussed in the next section and 
for a closed Jordan curve $\Gamma$ in general, our arguments work under
the existence of non-tangential limits only almost everywhere. 
\end{remark}
\smallskip

As a special case of the Vodop\cprime yanov theorem for composition operators
(the more general case is presented later), we have the following theorem. See \cite[Theorem 3.1]{NS}.
The if-part follows from the Dirichlet principle in this special case. 

\begin{theorem}\label{compo2}
The composition operator $C_h$ on $B_2(\mathbb S)$
is a Hilbert automorphism of $B_2(\mathbb S)$ if and only if $h$ is quasisymmetric.
\end{theorem}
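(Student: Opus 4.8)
The plan is to prove the two implications separately, leaning throughout on the isometric identification $B_2(\mathbb S)\cong{\rm HD}_2(\mathbb D)$ of Proposition \ref{Douglas} to convert statements about boundary functions into statements about Dirichlet integrals, where the Dirichlet principle and the quasiconformal distortion estimate \eqref{Kqc} are available. For the if-part, suppose $h$ is quasisymmetric and let $\phi\in B_2(\mathbb S)$, with harmonic extension $\Phi=P(\phi)\in{\rm HD}_2(\mathbb D)$. Since $h$ is quasisymmetric it extends to a $K$-quasiconformal self-homeomorphism $H$ of $\mathbb D$, with $K$ depending only on the quasisymmetry constant of $h$, so that $\Phi\circ H$ is a Dirichlet finite Tonelli function on $\mathbb D$ whose boundary values on $\mathbb S$ are $\phi\circ h=C_h(\phi)$. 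The distortion inequality gives $\Vert\Phi\circ H\Vert_{D_2}\le K^{1/2}\Vert\Phi\Vert_{D_2}$, and by the Dirichlet principle the harmonic representative $P(C_h(\phi))$ has Dirichlet integral no larger than $\Vert\Phi\circ H\Vert_{D_2}$. Hence $C_h(\phi)\in B_2(\mathbb S)$, and Proposition \ref{Douglas} turns these bounds into $\Vert C_h(\phi)\Vert_{B_2}\le c\,K^{1/2}\Vert\phi\Vert_{B_2}$ for a universal constant $c$. Applying the same argument to $h^{-1}$, which is again quasisymmetric, shows $C_{h^{-1}}=C_h^{-1}$ is bounded, so $C_h$ is a Hilbert automorphism.

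For the only-if part, assume $C_h$ is a Hilbert automorphism, so that both $C_h$ and $C_h^{-1}=C_{h^{-1}}$ are bounded by some $M$; the goal is the three-point (Beurling--Ahlfors) quasisymmetry condition on $h$. The idea is to express this condition through a conformally natural quantity that $C_h$ is forced to distort by at most a factor depending on $M$. Because the $B_2$-norm equals, up to a constant, the square root of the Dirichlet energy of the harmonic extension, the $2$-capacity of a condenser formed by two boundary arcs---equivalently the extremal length of the associated quadrilateral---can be realized by minimizing $\Vert\cdot\Vert_{B_2}$ over boundary functions with prescribed values on the arcs. Boundedness of $C_h$ in both directions then shows that $h$ quasi-preserves these capacities, hence the moduli of such quadrilaterals. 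Testing against the quadrilaterals determined by three nearly equally spaced points and invoking the standard relation between bounded modulus distortion and the quasisymmetry inequality (equivalently, Proposition \ref{quasicircle} applied to the welded curve) yields the quasisymmetry of $h$ with constant depending only on $M$.

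The main obstacle is precisely this converse. The naive route---changing variables $u=h(x)$, $v=h(y)$ in the Douglas integral to read off a pointwise weight bound---fails because $h$, even when quasisymmetric, need not be absolutely continuous, so no Jacobian is available. The argument must therefore be routed entirely through conformally invariant capacities or extremal lengths, which are insensitive to this lack of regularity, and the technical heart is the construction of near-extremal test functions in $B_2(\mathbb S)$ that simultaneously have controlled norm and sharply detect the distortion of arc-length ratios under $h$. This is exactly the content of the Vodop\cprime yanov-type theorem cited as \cite[Theorem 3.1]{NS}, whose general $p$-form we return to later.
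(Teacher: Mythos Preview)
Your if-part is correct and is precisely what the paper intends: it states explicitly that ``The if-part follows from the Dirichlet principle in this special case,'' and your argument via a quasiconformal extension $H$, the distortion bound \eqref{Kqc}, and the Dirichlet principle is the standard way to carry this out.

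For the only-if part, note that the paper does not give a proof at all; it simply cites \cite[Theorem 3.1]{NS} and records the result as a special case of the Vodop\cprime yanov theorem. Your capacity/modulus sketch is a genuine and valid route to this direction and in fact supplies more than the paper does. Two small comments are worth making. First, the appeal to Proposition~\ref{quasicircle} is roundabout; the clean statement you want is that a circle homeomorphism which quasi-preserves the conformal moduli of all boundary quadrilaterals is quasisymmetric (this is the classical equivalence underlying Beurling--Ahlfors), and no detour through a welded curve is needed. Second, the absolute-continuity obstacle you flag in the last paragraph does not actually bite in your own argument: the extremal function for the condenser is continuous on $\mathbb S$ (it is the boundary trace of the harmonic measure of one side of the quadrilateral), so $\phi\circ h$ is defined pointwise and is an admissible competitor for the $h^{-1}$-transformed condenser regardless of whether $h$ preserves null sets. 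The change-of-variables difficulty is real only for the naive Jacobian approach you correctly rejected, not for the capacity comparison.

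In summary, your proposal is correct; the if-part coincides with the paper's indicated argument, and the only-if part goes beyond the paper (which offers only a citation) with a sound extremal-length outline.
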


\begin{corollary}
If $\Gamma$ is a quasicircle, then $\Theta_\Gamma:{\rm HD}_2(\Omega_1) \to {\rm HD}_2(\Omega_2)$ is a Hilbert isomorphism.
Conversely, if this correspondence gives a Hilbert isomorphism of ${\rm HD}_2(\Omega_1)$ onto ${\rm HD}_2(\Omega_2)$, then
$\Gamma$ is a quasicircle.
\end{corollary}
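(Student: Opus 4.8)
The plan is to exploit the factorization \eqref{compose} of $\Theta_\Gamma$ into five operators and to isolate the only factor that depends on the geometry of $\Gamma$. First I would verify that the four outer maps are Hilbert isomorphisms regardless of $\Gamma$: the pull-back $(F_1)^*$ and the push-forward $(F_2)_*$ are isometric isomorphisms because the Dirichlet integral is invariant under the conformal changes of variable $F_1$ and $F_2$, while $E_1$ and $P_2$ are isomorphisms by Proposition \ref{Douglas}. Composing these observations shows that $\Theta_\Gamma:{\rm HD}_2(\Omega_1)\to{\rm HD}_2(\Omega_2)$ is a Hilbert isomorphism if and only if the middle factor $h_\Gamma^*=C_{h_\Gamma}$ is a Hilbert automorphism of $B_2(\mathbb S)$.

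With this reduction in hand, I would apply Theorem \ref{compo2}, which asserts that $C_{h_\Gamma}$ is a Hilbert automorphism of $B_2(\mathbb S)$ precisely when $h_\Gamma$ is quasisymmetric. Thus the entire statement collapses to the equivalence ``$\Gamma$ is a quasicircle $\iff$ $h_\Gamma$ is quasisymmetric'', which is the conformal welding dictionary recalled in the text. For the first (forward) assertion of the corollary I would use the quoted fact that a $K$-quasicircle has a normalized welding $h_\Gamma$ whose quasisymmetry constant depends only on $K$; hence $C_{h_\Gamma}$ is an automorphism and $\Theta_\Gamma$ is a Hilbert isomorphism.

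For the converse I would argue through welding: assuming $\Theta_\Gamma$ is a Hilbert isomorphism forces $C_{h_\Gamma}$ to be an automorphism, so $h_\Gamma$ is quasisymmetric by Theorem \ref{compo2}; then conformal welding applied to the quasisymmetric map $h_\Gamma$ produces a quasicircle $\Gamma'$, normalized through $1,i,-i$, whose welding equals $h_\Gamma$. The hard part will be concluding $\Gamma'=\Gamma$ rather than merely some quasicircle sharing the welding; this is where I would invoke the uniqueness of conformal welding for quasisymmetric homeomorphisms, which recovers the normalized curve from its welding and hence shows that the original $\Gamma$ is itself a quasicircle. The only subtle point throughout is that Proposition \ref{Douglas} gives isometries only up to a constant multiple, but since being an isomorphism is insensitive to such constants, the equivalence between $\Theta_\Gamma$ and $C_{h_\Gamma}$ is unaffected.
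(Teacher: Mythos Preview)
Your proposal is correct and matches the paper's intended argument: the paper presents the corollary as an immediate consequence of Theorem \ref{compo2} together with the factorization \eqref{compose}, and your reduction to the equivalence ``$\Gamma$ is a quasicircle $\iff$ $h_\Gamma$ is quasisymmetric'' is exactly that route. The one step you flag as ``hard'' in the converse---passing from quasisymmetry of $h_\Gamma$ back to the original curve $\Gamma$---is handled correctly via welding uniqueness, though a slightly more direct variant avoids invoking uniqueness: extend $h_\Gamma$ to a quasiconformal $H:\mathbb D\to\mathbb D$, glue $F_1\circ H$ on $\overline{\mathbb D}$ with $F_2$ on $\mathbb D^*$, and use removability of $\mathbb S$ (rather than of $\Gamma'$) to obtain a global quasiconformal map sending $\mathbb S$ to $\Gamma$.
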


\smallskip
This corollary is shown in \cite[Theorem 2.14]{SS0}, \cite[Theorem 3.5]{SS1}, \cite[Theorem 3.1]{Shiga}, and \cite[Theorem 1.1]{WZ}
under slightly different definitions and assumptions.

\subsection{Dirichlet principle}
Here, we revisit the Dirichlet principle.
Let $H^1(\Omega)=D_2(\Omega)$ be the (homogeneous) Sobolev space of 
locally integrable functions $\Psi$ on $\Omega$ such that the weak derivatives 
satisfy
$$
\Vert \Psi \Vert_{D_2}=\left(\int_{\Omega}(|\Psi_x|^2+|\Psi_y|^2)dxdy\right)^{1/2}<\infty.
$$

\begin{theorem}
The Sobolev space $D_2(\Omega)$
admits the orthogonal direct sum decomposition
\begin{equation}\label{dirichlet2}
D_2(\Omega)={\rm HD}_2(\Omega) \oplus \overline{C_0^\infty(\Omega)},
\end{equation}
where the closure is taken with respect to the Dirichlet norm $\Vert \cdot \Vert_{D_2}$.
\end{theorem}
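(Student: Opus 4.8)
The plan is to realize \eqref{dirichlet2} as the orthogonal splitting of the Hilbert space $D_2(\Omega)$, taken modulo constants, into the closed subspace $\overline{C_0^\infty(\Omega)}$ and its orthogonal complement, and then to identify that complement with ${\rm HD}_2(\Omega)$. The relevant inner product is the Dirichlet pairing
$$
\langle \Psi, \Phi\rangle_{D_2}=\int_\Omega\bigl(\Psi_x\overline{\Phi_x}+\Psi_y\overline{\Phi_y}\bigr)\,dxdy,
$$
for which $\Vert\cdot\Vert_{D_2}$ is the associated norm once constants are quotiented out. The preliminary point to secure is that $D_2(\Omega)$ is complete in this norm. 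Given a Dirichlet--Cauchy sequence $\Psi_n$, the gradients $\nabla\Psi_n$ form a Cauchy sequence in $L_2(\Omega)$ and converge to some field $V$; fixing a ball $B_0\Subset\Omega$ and normalizing so that $\int_{B_0}\Psi_n\,dxdy=0$, the local Poincar\'e inequality makes $\Psi_n$ Cauchy in $L_2(B_0)$, and patching along chains of overlapping balls produces locally Sobolev convergence to a function $\Psi$ with $\nabla\Psi=V$, whence $\Psi\in D_2(\Omega)$.

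Granting completeness, $\overline{C_0^\infty(\Omega)}$ is by construction a closed subspace, so the orthogonal projection theorem gives $D_2(\Omega)=\overline{C_0^\infty(\Omega)}\oplus(\overline{C_0^\infty(\Omega)})^\perp$, and it remains only to prove that $(\overline{C_0^\infty(\Omega)})^\perp={\rm HD}_2(\Omega)$. For the inclusion ${\rm HD}_2(\Omega)\subseteq(\overline{C_0^\infty(\Omega)})^\perp$, I would take $\Phi\in{\rm HD}_2(\Omega)$ and any $u\in C_0^\infty(\Omega)$ and apply Green's formula: since $u$ has compact support the boundary terms vanish, so $\langle\Phi,u\rangle_{D_2}=-\int_\Omega(\Delta\Phi)\,\bar u\,dxdy=0$ because $\Phi$ is harmonic, and continuity of the inner product extends the orthogonality to all of $\overline{C_0^\infty(\Omega)}$.

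For the reverse inclusion, I would take $\Phi\in(\overline{C_0^\infty(\Omega)})^\perp$, so that $\int_\Omega\nabla\Phi\cdot\overline{\nabla u}\,dxdy=0$ for every $u\in C_0^\infty(\Omega)$. Splitting $u$ into real and imaginary parts, this is exactly the assertion that $\Phi$ is harmonic in the distributional sense. By Weyl's lemma $\Phi$ coincides almost everywhere with a genuinely harmonic (hence $C^\infty$) function, and since $\Phi$ already lies in $D_2(\Omega)$ this representative has finite Dirichlet integral and therefore belongs to ${\rm HD}_2(\Omega)$. Combining the two inclusions yields $(\overline{C_0^\infty(\Omega)})^\perp={\rm HD}_2(\Omega)$ and hence the orthogonal direct sum \eqref{dirichlet2}.

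The step I expect to be the main obstacle is the completeness of $D_2(\Omega)$ modulo constants: the Dirichlet quantity is only a seminorm before quotienting, and one must verify that an $L_2$-convergent sequence of gradients is again the gradient of a $D_2$ function by the local Poincar\'e and patching argument above, checking that a single global normalization suffices to reconstruct the limit potential on the whole of $\Omega$ irrespective of its topology. Once this Hilbert-space framework is in place, the orthogonality computation via Green's formula and the regularity input from Weyl's lemma are routine.
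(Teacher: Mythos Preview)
Your proof is correct and follows the classical Weyl orthogonal-projection method, but the paper argues along a different line. The paper first establishes the decomposition on the unit disk $\mathbb D$ via boundary-value theory: the bounded trace operator $E:D_2(\mathbb D)\to B_2(\mathbb S)$ together with the Poisson integral $P:B_2(\mathbb S)\to{\rm HD}_2(\mathbb D)$ (an isometric right inverse by the Douglas formula, Proposition~\ref{Douglas}) give $D_2(\mathbb D)={\rm Im}\,P\oplus{\rm Ker}\,E$, after which ${\rm Ker}\,E$ is identified with $\overline{C_0^\infty(\mathbb D)}$; the general $\Omega$ is then handled by pushing forward through the Riemann mapping $\mathbb D\to\Omega$. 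Your route is more elementary and self-contained, invokes only Green's formula and Weyl's lemma, and works for an arbitrary planar domain with no connectivity hypothesis. The paper's route, by contrast, embeds the result into the trace/Poisson framework that is reused throughout (in particular for the $p$-Dirichlet principle and the transmission operators), at the cost of relying on the Riemann mapping and hence on simple connectivity of $\Omega$.
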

\smallskip

Indeed, in the case of $\Omega=\mathbb D$, there exists
a bounded trace operator $E:D_2(\mathbb D) \to B_2(\mathbb S)$ which gives the boundary extension of 
$\Psi \in D_2(\mathbb D)$ to $\mathbb S$. Then,
the Poisson integral operator $P:B_2(\mathbb S) \to {\rm HD}_2(\mathbb D)$ is the isometric inverse
for $E$ as in Proposition \ref{Douglas}. This yields the orthogonal direct sum decomposition
$D_2(\mathbb D)={\rm Im}\,P \oplus {\rm Ker}\,E$, where ${\rm Ker}\,E$ is also represented as
$\overline{C_0^\infty(\mathbb D)}$. Pushing forward the functions on $\mathbb D$ by the
Riemann mapping $\mathbb D \to \Omega$, we obtain \eqref{dirichlet2}.
We may regard this orthogonal decomposition as the Dirichlet principle.

\subsection{Chord-arc curves}
In the above arguments, the functions on $\Gamma$ mediate
between ${\rm HD}_2(\Omega_1)$ and ${\rm HD}_2(\Omega_2)$, but they are translated from those on $\mathbb S$ by
the Riemann mapping and the function space on $\Gamma$ is not essentially utilized.
However, if a better regularity on $\Gamma$ are assumed, a certain function space on $\Gamma$ exists
isomorphically between ${\rm HD}_2(\Omega_1)$ and ${\rm HD}_2(\Omega_2)$.
 
\begin{definition}
A closed Jordan curve $\Gamma$ in the complex plane $\mathbb C$ is called a {\em chord-arc curve} if
it is the image of $\mathbb S$ under a bi-Lipschitz self-homeomorphism $G:\mathbb C \to \mathbb C$. 
\end{definition}
\smallskip

A bi-Lipschitz homeomorphism is quasiconformal. Hence, a chord-arc curve is a quasicircle.
Moreover, a chord-arc curve is rectifiable. Arc-length is denoted by $\ell$.

\begin{proposition}
A rectifiable closed Jordan curve $\Gamma$ is a chord-arc curve if and only if there exists a constant $\kappa \geq 1$
such that $\ell(\widetilde{z_1z_2}) \leq \kappa |z_1-z_2|$ for any $z_1, z_2 \in \Gamma$, where
$\widetilde{z_1z_2}$ is the smaller subarc in $\Gamma$ between $z_1$ and $z_2$.
\end{proposition}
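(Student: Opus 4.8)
The plan is to prove the two implications separately, treating the elementary direction first and isolating the extension of a boundary homeomorphism as the genuine difficulty.

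For necessity, suppose $\Gamma=G(\mathbb S)$ with $G$ an $L$-bi-Lipschitz self-homeomorphism, so that $L^{-1}|z-w|\le |G(z)-G(w)|\le L|z-w|$. Given $z_1,z_2\in\Gamma$, write $z_i=G(\zeta_i)$ with $\zeta_i\in\mathbb S$, and let $A$ be the shorter of the two arcs of $\mathbb S$ joining $\zeta_1$ and $\zeta_2$. The two subarcs of $\Gamma$ determined by $z_1,z_2$ are the images of the two arcs of $\mathbb S$, so $\ell(\widetilde{z_1z_2})\le \ell(G(A))$, and the upper Lipschitz bound gives $\ell(G(A))\le L\,\ell(A)$. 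On the unit circle the shorter arc satisfies $\ell(A)\le \tfrac{\pi}{2}|\zeta_1-\zeta_2|$, while the lower bi-Lipschitz bound gives $|\zeta_1-\zeta_2|\le L|z_1-z_2|$. Combining these yields $\ell(\widetilde{z_1z_2})\le \tfrac{\pi}{2}L^2|z_1-z_2|$, so the chord-arc condition holds with $\kappa=\tfrac{\pi}{2}L^2$.

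For sufficiency, I would first reconstruct a bi-Lipschitz boundary parametrization from the chord-arc condition. Let $\Lambda=\ell(\Gamma)$ and let $g:\mathbb S\to\Gamma$ be the arc-length parametrization normalized by $g(e^{2\pi i s/\Lambda})=\gamma(s)$, where $\gamma$ traverses $\Gamma$ at unit speed. For $z_j=g(\zeta_j)$ the intrinsic distance along $\Gamma$ equals $\ell(\widetilde{z_1z_2})$, and the chord-arc hypothesis together with the trivial inequality $|z_1-z_2|\le \ell(\widetilde{z_1z_2})$ gives $\kappa^{-1}\ell(\widetilde{z_1z_2})\le |z_1-z_2|\le \ell(\widetilde{z_1z_2})$. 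Thus the Euclidean chord metric on $\Gamma$ is bi-Lipschitz equivalent to its intrinsic arc-length metric. Since the latter corresponds under $g$ to a constant multiple of arc length on $\mathbb S$, which is in turn comparable to the chord $|\zeta_1-\zeta_2|$ within the factor $\tfrac{\pi}{2}$, the map $g:\mathbb S\to\Gamma$ is bi-Lipschitz as a homeomorphism of metric spaces.

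The main obstacle is to upgrade this boundary parametrization to a global bi-Lipschitz self-homeomorphism of $\mathbb C$ carrying $\mathbb S$ onto $\Gamma$. For this I would invoke the bi-Lipschitz version of the Sch\"onflies theorem in the plane due to Tukia, which asserts that every bi-Lipschitz embedding of $\mathbb S$ into $\mathbb C$ extends to a bi-Lipschitz self-homeomorphism of $\mathbb C$. Applying it to $g$ produces the required $G$ with $G|_{\mathbb S}=g$ and $G(\mathbb S)=\Gamma$, completing the equivalence. I expect the purely metric reductions above to be routine; the substantive input is the extension theorem, whose proof is nontrivial and which I would cite rather than reprove. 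One may alternatively note that the chord-arc condition forces the two-point condition of Proposition \ref{quasicircle}, since ${\rm diam}(\widetilde{z_1z_2})\le \ell(\widetilde{z_1z_2})\le \kappa|z_1-z_2|$, so $\Gamma$ is a quasicircle; but the quasiconformal extension this provides is weaker than the bi-Lipschitz extension actually required.
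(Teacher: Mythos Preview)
The paper does not actually supply a proof of this proposition: it is stated as a known characterization, parallel to Proposition~\ref{quasicircle} for quasicircles, and is used only to remark that chord-arc curves are quasicircles. Your proposal is therefore not being compared against an argument in the paper but stands on its own.

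Your argument is correct and is the standard route. The necessity direction is elementary and your chain of inequalities is fine; in particular you correctly handle the possibility that $G$ need not carry the shorter arc of $\mathbb S$ to the shorter arc of $\Gamma$ by taking $\ell(\widetilde{z_1z_2})\le\ell(G(A))$. For sufficiency, you correctly reduce to showing that the arc-length parametrization $g:\mathbb S\to\Gamma$ is bi-Lipschitz in the Euclidean metric, and you correctly identify Tukia's planar bi-Lipschitz Sch\"onflies theorem as the one nontrivial ingredient needed to extend $g$ to a bi-Lipschitz self-homeomorphism of $\mathbb C$. Citing that result rather than reproving it is appropriate here. Your closing observation that the chord-arc inequality immediately yields the Ahlfors two-point condition (and hence that $\Gamma$ is a quasicircle) is exactly the remark the paper makes after stating the proposition.
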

\smallskip

By comparing this characterization with Proposition \ref{quasicircle}, we also see that a chord-arc curve is a quasicircle.

Since a chord-arc curve $\Gamma$ is rectifiable, we can directly define the Besov space on $\Gamma$:
\begin{align*}
B_2(\Gamma)=\{\phi \in L_2(\Gamma) \mid \Vert \phi \Vert_{B_2}<\infty\},\quad
\Vert \phi \Vert_{B_2}=\left(\int_{\Gamma} \int_{\Gamma} \frac{|\phi(x)-\phi(y)|^2}{|x-y|^2}dxdy \right)^{1/2},
\end{align*}
where $x$ and $y$ are arc-length parameters of $\Gamma$.
Then, we have the correspondence under the boundary extension by
the non-tangential limits and the integral by the harmonic measures likewise in the case of the unit circle:
$$
{\rm HD}_2(\Omega_1) \overset{E_1}{\underset{P_1}{\rightleftarrows}} B_2(\Gamma)
\overset{E_2}{\underset{P_2}{\leftrightarrows}} {\rm HD}_2(\Omega_2).
$$
We note that by the Lavrentiev theorem for chord-arc curves, the harmonic measures on $\Gamma$ with respect to $\Omega_1$, $\Omega_2$ and 
the arc-length measure on $\Gamma$ are
absolutely continuous with $A_\infty$ derivative (strongly quasisymmetric) to each other. See \cite[p.222]{JK}.

The boundedness of $E$ and $P$ are shown by comparing the norm $\Vert \phi \Vert_{B_2}$ with
$\Vert F^*\phi \Vert_{B_2}$ under the Riemann mapping $F:\mathbb D \to \Omega$ with the use of
Theorem \ref{compo2},
and then applying Proposition \ref{Douglas}.
In addition to this,
Wei and Zinsmeister \cite[Theorem 3]{WZ} also prove the converse statement, which 
gives a characterization of chord-arc curve in terms of the boundedness of
the boundary extension operator.

\begin{theorem}\label{WZ}
For a rectifiable closed Jordan curve $\Gamma$, the boundary extension operator $E$ on ${\rm HD}_2(\Omega)$
defines a Hilbert isomorphism onto $B_2(\Gamma)$ if and only if $\Gamma$ is a chord-arc curve.
\end{theorem}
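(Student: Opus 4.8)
The plan is to transfer the whole problem to the unit circle by means of the Riemann mapping and Proposition~\ref{Douglas}, and then to recognize the boundary extension operator as a reparametrization operator on Besov spaces whose invertibility is controlled by Theorem~\ref{compo2}. Fix a conformal homeomorphism $F:\mathbb D \to \Omega$ and write $f=F|_{\mathbb S}:\mathbb S \to \Gamma$ for its boundary correspondence, together with the arc-length parametrization $s:\mathbb S \to \Gamma$. For $\Phi \in {\rm HD}_2(\Omega)$, conformal invariance of the Dirichlet integral together with Proposition~\ref{Douglas} gives $\Vert \Phi \Vert_{D_2} \asymp \Vert (E\Phi)\circ f \Vert_{B_2(\mathbb S)}$, so that $\Phi \mapsto (E\Phi)\circ f$ is a fixed isomorphism of ${\rm HD}_2(\Omega)$ onto the standard space $B_2(\mathbb S)$, valid for any rectifiable $\Gamma$. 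Under this identification, requiring $E:{\rm HD}_2(\Omega)\to B_2(\Gamma)$ to be an isomorphism becomes the requirement that the two norms $\Vert (E\Phi)\circ f\Vert_{B_2(\mathbb S)}$ and $\Vert E\Phi\Vert_{B_2(\Gamma)}$ be comparable. Pulling $B_2(\Gamma)$ back isometrically by $s$ and writing $\sigma=s^{-1}\circ f$ so that $(E\Phi)\circ f=\bigl((E\Phi)\circ s\bigr)\circ\sigma$, this splits conceptually into two points: the comparison of the arc-length/chordal norm inherited from $B_2(\Gamma)$ with the standard norm of $B_2(\mathbb S)$, and the invertibility of the composition operator $C_\sigma$.

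For the sufficiency, suppose $\Gamma$ is chord-arc. Then arc and chord are comparable, $\ell(\widetilde{z_1z_2}) \asymp |z_1-z_2|$, so the norm pulled back from $B_2(\Gamma)$ along $s$ is comparable to the standard norm of $B_2(\mathbb S)$; it therefore suffices to see that $C_{\sigma^{-1}}$ is an automorphism of the standard $B_2(\mathbb S)$. By the Lavrentiev theorem quoted above, harmonic measure and arc-length on $\Gamma$ are mutually absolutely continuous with $A_\infty$ densities, which forces the change of parameter $\sigma$ to be quasisymmetric; Theorem~\ref{compo2} then gives that $C_{\sigma^{-1}}$ is a Hilbert automorphism. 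Chaining $\Vert E\Phi\Vert_{B_2(\Gamma)}\asymp\Vert (E\Phi)\circ s\Vert_{B_2(\mathbb S)}=\Vert C_{\sigma^{-1}}\bigl((E\Phi)\circ f\bigr)\Vert_{B_2(\mathbb S)}\asymp\Vert (E\Phi)\circ f\Vert_{B_2(\mathbb S)}\asymp\Vert\Phi\Vert_{D_2}$ yields the isomorphism, which is precisely the boundedness of $E$ and $P$ recorded in the discussion preceding the theorem.

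For the necessity, assume $E$ is an isomorphism, i.e.\ $\Vert (E\Phi)\circ f\Vert_{B_2(\mathbb S)} \asymp \Vert E\Phi \Vert_{B_2(\Gamma)}$ for all $\Phi$, and deduce that $\Gamma$ is chord-arc. The most direct route, and the one I would follow, is by contradiction through test functions: if the chord-arc inequality fails, there are points $z_1,z_2\in\Gamma$ whose joining subarc is much longer than the chord $|z_1-z_2|$, and one constructs a boundary function localized near this configuration whose $B_2(\Gamma)$-norm (sensitive to arc-length and the chordal distance) and whose transported $B_2(\mathbb S)$-norm (sensitive to harmonic measure) are forced apart, violating the two-sided estimate; conceptually this is the statement that the norm equivalence compels arc-length and harmonic measure to be $A_\infty$-equivalent, whence $\Gamma$ is chord-arc by the converse direction of the Lavrentiev characterization. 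This necessity is exactly the content of Wei--Zinsmeister \cite[Theorem 3]{WZ}, and its quantitative core is the main obstacle: from a single scalar comparison of norms one must simultaneously recover \emph{both} the comparability of chord and arc \emph{and} the quasisymmetry of $\sigma$, neither of which is available a priori, so the two constituents identified in the first paragraph must be disentangled rather than assumed.
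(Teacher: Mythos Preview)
Your proposal is correct and follows essentially the same route as the paper. The paper does not give a formal proof of this theorem: it sketches the sufficiency in one sentence---compare $\Vert\phi\Vert_{B_2(\Gamma)}$ with $\Vert F^*\phi\Vert_{B_2(\mathbb S)}$ via the Riemann map, invoke Theorem~\ref{compo2}, then Proposition~\ref{Douglas}---and attributes the necessity to Wei--Zinsmeister \cite[Theorem~3]{WZ}; your argument is a fleshed-out version of exactly this, making explicit the factorization through the arc-length parametrization $s$, the reparametrization $\sigma=s^{-1}\circ f$, and the use of Lavrentiev to obtain quasisymmetry of $\sigma$, while likewise deferring the converse to \cite{WZ}.
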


\section{$p$-Dirichlet finite harmonic functions}\label{3}

Harmonic functions on the unit disk with finite Dirichlet integral are well-studied. 
In this section, we generalize the exponent of the integrability of the derivative from $p=2$ to any $p>1$, 
and lay the foundation for this space.

\subsection{$p$-Dirichlet integral and $p$-Besov functions}

Let $\Phi$ be a complex-valued harmonic function on a planar domain $\Omega$.
For $p > 1$, let
$$
\Vert \Phi \Vert_{D_p}=\left(\int_{\Omega} (\vert \nabla \Phi(z) \vert
\rho_{\Omega}^{-1}(z))^p \rho_{\Omega}^2(z)dxdy\right)^{1/p},
$$
where $|\cdot |$ is the Euclidean norm on $\mathbb C^2$ and $\rho_{\Omega}$ is the hyperbolic density on $\Omega$.
The set of all those {\em $p$-Dirichlet finite} harmonic functions $\Phi$ with $\Vert \Phi \Vert_{D_p}<\infty$ is denoted by 
${\rm HD}_p(\Omega)$.
By ignoring the difference in constant functions, ${\rm HD}_{p}(\Omega)$ is a complex Banach space with norm $\Vert \Phi \Vert_{D_p}$.
For $p=2$, this is the space of Dirichlet finite harmonic functions on $\Omega$ defined in Section \ref{2}.
If $p<p'$, then the inclusion ${\rm HD}_p(\Omega) \hookrightarrow {\rm HD}_{p'}(\Omega)$ is continuous.

We consider the correspondence between ${\rm HD}_p(\mathbb D)$ and ${\rm HD}_p(\mathbb D^*)$ as before.
The conformal reflection $\lambda$ with respect to $\mathbb S$ induces the isometric isomorphism between them.
The space of their boundary functions is given as follows.

\begin{definition}
For an integrable function $\phi$ on the unit circle $\mathbb S$ and for $p > 1$, let
$$
\Vert \phi \Vert_{B_p}=\left(\int_{\mathbb S}\int_{\mathbb S}\frac{|\phi(x)-\phi(y)|^p}{|x-y|^2}dydy\right)^{1/p},
$$
and the set of all those $\phi$ with $\Vert \phi \Vert_{B_p}<\infty$ is denoted by $B_p(\mathbb S)$ and called 
the (homogeneous critical) {\it $p$-Besov space}. This is regarded as a complex Banach space.
\end{definition}
\smallskip

Every harmonic function $\Phi$ in ${\rm HD}_p(\mathbb D)$ has non-tangential limits almost everywhere on $\mathbb S$
because ${\rm HD}_p(\mathbb D)$ is contained in the harmonic Hardy space $h_1$. 
This boundary function is denoted by $E(\Phi)$. Then, $E(\Phi)$ belongs to $B_p(\mathbb S)$.
Conversely, for any $\phi \in B_p(\mathbb S)$, its Poisson integral $P(\phi)$
belongs to ${\rm HD}_p(\mathbb D)$. Moreover, the following property is known. See \cite[Chap.V, Proposition 7$'$]{St}.

\begin{theorem}\label{PE}
For $p>1$, the boundary extension operator $E: {\rm HD}_p(\mathbb D) \to B_p(\mathbb S)$ and the Poisson integral operator
$P:B_p(\mathbb S) \to {\rm HD}_p(\mathbb D)$ are Banach isomorphisms with $P^{-1}=E$. 
\end{theorem}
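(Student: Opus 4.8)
The plan is to recognize Theorem~\ref{PE} as the harmonic-extension characterization of a critical (homogeneous) Besov space, transported to the disk and the circle. First I would make the relevant weight explicit. Since the hyperbolic density on $\mathbb D$ is $\rho_{\mathbb D}(z)=2/(1-|z|^2)$, the integrand $(|\nabla\Phi|\rho_{\mathbb D}^{-1})^p\rho_{\mathbb D}^2$ equals a constant multiple of $|\nabla\Phi(z)|^p(1-|z|^2)^{p-2}$, so that
\begin{equation*}
\Vert\Phi\Vert_{D_p}^p \asymp \int_{\mathbb D}|\nabla\Phi(z)|^p(1-|z|^2)^{p-2}\,dxdy.
\end{equation*}
The particular normalization by $\rho_{\mathbb D}$ is what makes $\Vert\cdot\Vert_{D_p}$ conformally invariant and the reflection $\lambda$ isometric, but for the present estimate on the disk only the resulting weight $(1-|z|^2)^{p-2}$ matters.

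The heart of the theorem is then the two-sided estimate
\begin{equation*}
\Vert P(\phi)\Vert_{D_p}\asymp\Vert\phi\Vert_{B_p},\qquad \phi\in B_p(\mathbb S).
\end{equation*}
I would obtain it by identifying $B_p(\mathbb S)$ with the homogeneous Besov space $\dot B^{s}_{p,p}(\mathbb S)$ for the exponent $s$ solving $1+sp=2$ on the one-dimensional circle, that is $s=1/p$, since the Gagliardo seminorm defining $\Vert\cdot\Vert_{B_p}$ has kernel $|x-y|^{-2}$. For this critical index the Poisson-extension characterization of Besov spaces (the weighted square-function description in Stein \cite[Chap.~V, Proposition~7$'$]{St}) involves exactly the derivative weight $t^{\,p(1-s)-1}=t^{\,p-2}$, and near $\mathbb S$ one has $1-|z|^2\asymp\mathrm{dist}(z,\mathbb S)$, which plays the role of $t$. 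Transferring this statement from $\mathbb R$ to the circle by periodization, or by passing through a conformal map to a half-plane, I obtain both the boundedness of $P:B_p(\mathbb S)\to{\rm HD}_p(\mathbb D)$ and, on its image, the reverse inequality, which together with the first paragraph yield the norm equivalence and hence also the boundedness of $E$.

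It remains to see that $E$ and $P$ are mutually inverse. For $\phi\in B_p(\mathbb S)\subset L^1(\mathbb S)$ the Poisson integral $P(\phi)$ has non-tangential limit $\phi$ almost everywhere by Fatou's theorem, so $E(P(\phi))=\phi$. Conversely, any $\Phi\in{\rm HD}_p(\mathbb D)$ lies in the harmonic Hardy space $h_1$, as already noted, and therefore admits the Poisson representation $\Phi=P(E(\Phi))$. Thus $P$ and $E$ are bounded linear maps satisfying $E\circ P=\mathrm{id}$ and $P\circ E=\mathrm{id}$; combined with the norm equivalence $\Vert\Phi\Vert_{D_p}\asymp\Vert E(\Phi)\Vert_{B_p}$ this shows that both are Banach isomorphisms with $P^{-1}=E$.

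The main obstacle is the norm equivalence of the second paragraph: establishing that the weighted $L^p$ norm of the gradient of the harmonic extension is comparable to the Gagliardo Besov seminorm precisely in this critical case, and verifying that the weight $(1-|z|^2)^{p-2}$ on the disk matches the flat weight $t^{\,p-2}$ of Stein's half-space statement after the change of variables. The one-sided inequalities follow from Littlewood--Paley and square-function estimates, but equality genuinely fails for $p\neq 2$, which is why the conclusion is an isomorphism rather than an isometry; keeping track of the correct scaling exponent and justifying the transfer from $\mathbb R$ to $\mathbb S$ is the delicate part.
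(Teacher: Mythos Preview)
Your proposal is correct and follows exactly the route the paper indicates: the paper does not give its own proof but simply cites Stein \cite[Chap.~V, Proposition~7$'$]{St}, and you have correctly unpacked how that Poisson-extension characterization of $\dot B^{1/p}_{p,p}$ yields the two-sided estimate $\Vert P(\phi)\Vert_{D_p}\asymp\Vert\phi\Vert_{B_p}$ after identifying the hyperbolic weight with $(1-|z|^2)^{p-2}$. Your supplementary remarks on $E\circ P=\mathrm{id}$ via Fatou and $P\circ E=\mathrm{id}$ via the $h_1$ representation fill in the bijectivity, which the paper also leaves implicit.
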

\smallskip

Thus, we have the correspondence ${\rm HD}_p(\mathbb D) \cong B_p(\mathbb S) \cong {\rm HD}_p(\mathbb D^*)$ 
under the Banach isomorphisms. We generalize this to the function spaces on
quasidisks $\Omega_1$ and $\Omega_2$ divided by a quasicircle $\Gamma$.

\subsection{The generalized transmission operator}

For the composition operator $C_h$ on $B_p(\mathbb S)$ given by 
an orientation-preserving homeomorphism  $h:\mathbb S \to \mathbb S$,
Theorem \ref{compo2} can be generalized to the Vodop\cprime yanov theorem as follows.

\begin{theorem}\label{vod}
The composition operator $C_h$ on $B_p(\mathbb S)$ for $p>1$
defines a Banach automorphism of $B_p(\mathbb S)$ if and only if $h$ is quasisymmetric.
\end{theorem}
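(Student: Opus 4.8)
The plan is to prove both implications by transferring the problem to the conformally invariant $p$-Dirichlet space on $\mathbb D$ via Theorem \ref{PE}, and I expect the \emph{if}-part to be the genuine difficulty for $p\neq 2$.

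For the \emph{if}-part, suppose $h$ is quasisymmetric. First I would extend $h$ to a quasiconformal self-homeomorphism $H$ of $\mathbb D$ using the conformally natural (Douady--Earle) extension, for which $H$ is moreover \emph{bi-Lipschitz with respect to the hyperbolic metric}, with a constant $\lambda\geq 1$ depending only on the quasisymmetry constant of $h$. Given $\phi\in B_p(\mathbb S)$, set $u=P(\phi)\in {\rm HD}_p(\mathbb D)$, so that $\Vert u\Vert_{D_p}\asymp\Vert\phi\Vert_{B_p}$ by Theorem \ref{PE}. Since the integrand $(\vert\nabla u\vert\rho_{\mathbb D}^{-1})^p\rho_{\mathbb D}^2\,dxdy$ of $\Vert\cdot\Vert_{D_p}$ is the $p$-th power of the hyperbolic gradient norm integrated against hyperbolic area, the chain rule together with the two-sided hyperbolic bounds on $DH$ yields, after the change of variables $w=H(z)$, the estimate $\Vert u\circ H\Vert_{D_p}\leq\lambda^{(p+2)/p}\Vert u\Vert_{D_p}$. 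The composition $u\circ H$ lies in the ($p$-)Dirichlet space $D_p(\mathbb D)$ and has boundary trace $\phi\circ h$ almost everywhere. Applying the trace theorem for $D_p(\mathbb D)$ --- the $p$-analogue of the bounded trace operator $E\colon D_2(\mathbb D)\to B_2(\mathbb S)$ used in the Dirichlet principle --- gives $C_h(\phi)=\phi\circ h\in B_p(\mathbb S)$ with $\Vert C_h(\phi)\Vert_{B_p}\lesssim\Vert\phi\Vert_{B_p}$. Running the same argument for $h^{-1}$ (also quasisymmetric) shows that $C_h$ is a Banach automorphism with inverse $C_{h^{-1}}$.

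For the \emph{only if}-part, suppose $C_h$ is a Banach automorphism, so that both $C_h$ and $C_{h^{-1}}$ are bounded. Via the isomorphisms ${\rm HD}_p(\mathbb D)\cong B_p(\mathbb S)\cong{\rm HD}_p(\mathbb D^*)$ of Theorem \ref{PE}, I would reformulate this as a two-sided norm comparison for harmonic functions whose boundary values correspond under $h$. Testing this comparison on condensers, I would attach to each pair of boundary arcs $A,B\subset\mathbb S$ the conformally invariant $p$-capacity ${\rm cap}_p(A,B)=\inf\Vert u\Vert_{D_p}^p$ over $u\in{\rm HD}_p(\mathbb D)$ separating $A$ from $B$; because $\Vert\cdot\Vert_{D_p}$ is M\"obius invariant, this capacity is a M\"obius-invariant function of the configuration of $A$ and $B$. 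The boundedness of $C_h$ and $C_{h^{-1}}$ forces ${\rm cap}_p(h(A),h(B))\asymp{\rm cap}_p(A,B)$ for all such pairs, and the quasi-invariance of these condenser capacities over all configurations is equivalent to the geometric (cross-ratio, or adjacent equal-length arc) definition of quasisymmetry. Hence $h$ is quasisymmetric.

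The main obstacle is the \emph{if}-part for $p\neq 2$. For $p=2$ one needs only the plain quasiconformal distortion bound \eqref{Kqc} --- whose proof leaves no residual Jacobian factor --- together with the fact that harmonic functions minimize the Dirichlet energy; both simplifications fail for general $p$, since the hyperbolic Jacobian enters with exponent $p/2-1\neq 0$ and harmonic functions are no longer $p$-energy minimizers. The remedy is precisely to use a \emph{hyperbolically bi-Lipschitz} extension, so that $\Vert DH\Vert$ and the Jacobian of $H$ in the hyperbolic metric are bounded on both sides, and to replace the energy-minimization step by the trace theorem for $D_p(\mathbb D)$. Verifying that the Douady--Earle extension indeed has two-sided hyperbolic derivative bounds controlled by the quasisymmetry constant, and that the conformally invariant $p$-capacity comparison in the \emph{only if}-part sharply recovers quasisymmetry, are the two points I expect to require the most care.
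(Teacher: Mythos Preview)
Your \emph{if}-part is correct and takes a genuinely different route from the paper's sketch. The paper follows Bourdaud--Sickel: realize $B_p(\mathbb S)$ as the trace space of $B^{2/p}_{p,p}(\mathbb D)$, obtain the latter by real interpolation $({\rm BMO}(\mathbb D),W^1_2(\mathbb D))_{2/p,p}$ for $p>2$, and use that composition with a quasiconformal self-map of $\mathbb D$ is bounded on both interpolation endpoints. You instead exploit that the Douady--Earle extension is bi-Lipschitz in the hyperbolic metric, estimate $\Vert u\circ H\Vert_{D_p}$ directly from the chain rule and the hyperbolic change of variables, and then trace down via the Uspenski\u{\i} theorem underlying Theorem~\ref{dirichletp}. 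Your argument is more elementary, handles all $p>1$ uniformly without a separate treatment of $1<p<2$, and makes the operator bound explicit in the hyperbolic Lipschitz constant; the interpolation route, by contrast, extends immediately to the full Besov scale $B^s_{p,q}$ and does not rely on any particular choice of extension. One caution on your \emph{only if}-part: your ${\rm cap}_p(A,B)$ must be taken over \emph{separated} arcs (characteristic functions of complementary arcs are not in $B_p(\mathbb S)$), and because the $D_p$-integrand carries the hyperbolic weight $\rho_{\mathbb D}^{2-p}$, the resulting quantity is not the classical conformal $p$-capacity; to deduce quasisymmetry you will need an explicit two-sided asymptotic for this weighted capacity as a function of the cross-ratio of the four arc endpoints, and that computation is exactly where the care you anticipate is required.
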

\smallskip

The proof is more complicated than that of Theorem \ref{compo2}.
We use the trace operator $E:B^{2/p}_{p,p}(\mathbb D) \to B_p(\mathbb S)$ and the real interpolation
$B^{2/p}_{p,p}(\mathbb D)=({\rm BMO}(\mathbb D), W^1_2(\mathbb D))_{2/p,p}$ for $p>2$. Then, the composition operator induced by 
a quasiconformal self-homeomorphism of $\mathbb D$ is bounded on ${\rm BMO}(\mathbb D)$ and $W^1_2(\mathbb D)$.
See \cite[Theorem 1.3]{BS} and \cite[Theorem 12]{B2}.

\smallskip
\begin{remark}
Several statements and definitions in this paper can be also generalized to the case of $p =1$,
but Theorem \ref{vod} is critical for our arguments in the sense that the condition $p>1$ is required.
The treatment for the case $p=1$ is developed in \cite{Mnew}.
\end{remark}
\smallskip

Now, we consider the correspondence between ${\rm HD}_p(\Omega_1)$ and ${\rm HD}_p(\Omega_2)$
for a quasicircle $\Gamma$. As before,
by the normalized Riemann mappings $F_1:\mathbb D \to \Omega_1$ and $F_2:\mathbb D^* \to \Omega_2$, and
the orientation-preserving homeomorphism 
$h_\Gamma=F_1^{-1} \circ F_2|_{\mathbb S}$, we obtain the sequence for the definition of the transmission operator
\begin{align}\label{sequence}
\Theta_\Gamma:\ &{\rm HD}_p(\Omega_1) \overset{(F_1)^*}{\longrightarrow} {\rm HD}_p(\mathbb D) \overset{E_1}{\longrightarrow} B_p(\mathbb S)
\overset{h_\Gamma^*}{\longrightarrow} B_p(\mathbb S) \overset{P_2}{\longrightarrow} {\rm HD}_p(\mathbb D^*)
\overset{(F_2)_*}{\longrightarrow} {\rm HD}_p(\Omega_2).
\end{align}
Then, Theorems \ref{PE} and \ref{vod} imply the following.

\begin{corollary}\label{main1}
If $\Gamma$ is a quasicircle, then $\Theta_\Gamma:{\rm HD}_p(\Omega_1) \to {\rm HD}_p(\Omega_2)$ is a Banach isomorphism.
Conversely, if this correspondence defines a Banach isomorphism of ${\rm HD}_p(\Omega_1)$ onto ${\rm HD}_p(\Omega_2)$, then
$\Gamma$ is a quasicircle.
\end{corollary}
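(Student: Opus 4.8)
The plan is to read the conclusion directly off the factorization \eqref{sequence} of $\Theta_\Gamma$, observing that every arrow in that sequence except the middle composition operator $h_\Gamma^\ast = C_{h_\Gamma}$ is a Banach isomorphism regardless of any regularity of $\Gamma$. Consequently the entire composition is a Banach isomorphism precisely when $C_{h_\Gamma}$ is a Banach automorphism of $B_p(\mathbb S)$, and the equivalence in the statement is then inherited from the ``if and only if'' in Theorem \ref{vod}.

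First I would record that the four outer arrows are Banach isomorphisms. The pull-back $(F_1)^\ast$ and the push-forward $(F_2)_\ast$ by the normalized Riemann mappings are isometric, because the $p$-Dirichlet norm $\Vert\cdot\Vert_{D_p}$ is built from the hyperbolic density and is therefore conformally invariant: writing $\rho_{\mathbb D}(w)=\rho_\Omega(F(w))|F'(w)|$ and $|\nabla(\Phi\circ F)|=(|\nabla\Phi|\circ F)\,|F'|$, the factors $|F'|$ and the area element transformation combine so that the integrand $(|\nabla\Phi|\,\rho^{-1})^p\rho^2\,dxdy$ is left unchanged. The extension operator $E_1$ is a Banach isomorphism by Theorem \ref{PE}, and $P_2$ is the analogous isomorphism for $\mathbb D^\ast$, obtained from Theorem \ref{PE} via the conformal reflection $\lambda$ that identifies ${\rm HD}_p(\mathbb D)$ with ${\rm HD}_p(\mathbb D^\ast)$ isometrically.

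For the forward implication, if $\Gamma$ is a quasicircle then the welding homeomorphism $h_\Gamma=F_1^{-1}\circ F_2|_{\mathbb S}$ is quasisymmetric, with quasisymmetry constant controlled by the dilatation bound as recalled after Proposition \ref{quasicircle}. Theorem \ref{vod} then makes $C_{h_\Gamma}$ a Banach automorphism of $B_p(\mathbb S)$, so the composition of the five isomorphisms in \eqref{sequence} shows that $\Theta_\Gamma$ is a Banach isomorphism. For the converse, I would solve \eqref{sequence} for the middle factor,
$$
C_{h_\Gamma}=P_2^{-1}\circ (F_2)_\ast^{-1}\circ \Theta_\Gamma\circ \bigl((F_1)^\ast\bigr)^{-1}\circ E_1^{-1},
$$
so that if $\Theta_\Gamma$ is a Banach isomorphism, then $C_{h_\Gamma}$ is a Banach automorphism of $B_p(\mathbb S)$, since the outer four maps are isomorphisms. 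The ``only if'' part of Theorem \ref{vod} yields that $h_\Gamma$ is quasisymmetric, and the conformal welding of a quasisymmetric $h_\Gamma$ produces a quasicircle with that same welding homeomorphism; by uniqueness of conformal welding under the three-point normalization this quasicircle coincides with $\Gamma$, whence $\Gamma$ is a quasicircle.

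Once Theorems \ref{PE} and \ref{vod} are available the argument is essentially formal, so I do not expect a serious obstacle. The two points requiring care are (i) phrasing the conformal invariance of $\Vert\cdot\Vert_{D_p}$ correctly so that the Riemann-mapping operators are genuinely isometries, and (ii) in the converse, the passage from ``$h_\Gamma$ is quasisymmetric'' to ``$\Gamma$ is a quasicircle'', which rests on uniqueness of conformal welding rather than on anything about harmonic functions. I would treat (ii) as the delicate step and make sure the normalization is invoked so that the welding map determines $\Gamma$ unambiguously.
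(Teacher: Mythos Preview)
Your proposal is correct and follows precisely the route the paper intends: the corollary is stated immediately after the sequence \eqref{sequence} with the one-line justification ``Theorems \ref{PE} and \ref{vod} imply the following,'' and you have simply unpacked that sentence, noting the conformal invariance of $\Vert\cdot\Vert_{D_p}$ for the outer isometries and invoking Theorem \ref{vod} for the equivalence in the middle. Your caution about point (ii) is well placed---the passage from ``$h_\Gamma$ quasisymmetric'' to ``$\Gamma$ is a quasicircle'' via uniqueness of conformal welding is exactly the fact recorded in Section~2.3 (``Conversely, for any normalized quasisymmetric homeomorphism $h$ of $\mathbb S$, the conformal welding yields a quasicircle $\Gamma$ for which $h=h_\Gamma$''), so the paper has already set this up for you.
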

\smallskip

When $\Gamma$ is a chord-arc curve, the $p$-Besov space $B_p(\Gamma)$ on $\Gamma$ for $p>1$ is defined by the same way as
in the case of $p=2$:
\begin{align*}
B_p(\Gamma)=\{\phi \in L_p(\Gamma) \mid \Vert \phi \Vert_{B_p}<\infty\},\quad
\Vert \phi \Vert_{B_p}=\left(\int_{\Gamma} \int_{\Gamma} \frac{|\phi(x)-\phi(y)|^p}{|x-y|^2}dxdy \right)^{1/p}.
\end{align*}
Since $\Vert \phi \Vert_{B_p}$ for $\phi \in B_p(\Gamma)$ is comparable to
$\Vert F^*\phi\Vert_{B_p}$ by Theorem \ref{vod} as before, 
Theorem \ref{PE} implies the generalization of the if-part of Theorem \ref{WZ}.
See \cite[Theorem 3]{WZp}.

\begin{corollary}
If $\Gamma$ is a chord-arc curve, the boundary extension operator 
$E:{\rm HD}_p(\Omega) \to B_p(\Gamma)$ is a Banach isomorphism for $p>1$.
\end{corollary}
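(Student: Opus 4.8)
The plan is to factor $E$ through the unit disk and reduce the claim to the disk case (Theorem \ref{PE}) combined with Theorem \ref{vod}. Fix the normalized Riemann mapping $F:\mathbb D\to\Omega$ and let $f=F|_{\mathbb S}:\mathbb S\to\Gamma$ be its boundary homeomorphism. I would first observe that the $p$-Dirichlet norm is conformally invariant: since $\rho_{\mathbb D}(w)=\rho_\Omega(F(w))|F'(w)|$ and $|\nabla(\Phi\circ F)|=(|\nabla\Phi|\circ F)\,|F'|$, a change of variables shows $\Vert\Phi\circ F\Vert_{D_p}=\Vert\Phi\Vert_{D_p}$, so $F^*:{\rm HD}_p(\Omega)\to{\rm HD}_p(\mathbb D)$ is an isometric isomorphism. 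Because a conformal map of a quasidisk carries non-tangential approach to non-tangential approach almost everywhere, the boundary extensions intertwine, giving $E_{\mathbb D}\circ F^*=f^*\circ E$ with $E_{\mathbb D}:{\rm HD}_p(\mathbb D)\to B_p(\mathbb S)$ the disk extension and $f^*\phi=\phi\circ f$. As $E_{\mathbb D}$ is a Banach isomorphism by Theorem \ref{PE}, the corollary reduces to showing that $f^*:B_p(\Gamma)\to B_p(\mathbb S)$ is a Banach isomorphism.

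To handle $f^*$ I would split off the arc-length parametrization $\alpha:\mathbb S\to\Gamma$. Since $\Gamma$ is chord-arc, arc length and chordal distance are comparable, so the double integral defining $\Vert\cdot\Vert_{B_p(\Gamma)}$ is comparable to the standard $B_p(\mathbb S)$ norm pulled back by $\alpha$; hence $\alpha^*:B_p(\Gamma)\to B_p(\mathbb S)$ is a Banach isomorphism. Setting $h=\alpha^{-1}\circ f:\mathbb S\to\mathbb S$ yields the factorization $f^*=C_h\circ\alpha^*$, where $C_h$ is the composition operator on $B_p(\mathbb S)$. By Theorem \ref{vod}, $C_h$ is a Banach automorphism exactly when $h$ is quasisymmetric, so everything comes down to the quasisymmetry of $h$.

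This verification is the main obstacle, and it is precisely where the chord-arc geometry enters. The map $f$ pushes normalized Lebesgue measure on $\mathbb S$ forward to the harmonic measure $\omega$ of $\Omega$ on $\Gamma$, while $\alpha$ pushes it forward to arc-length measure $\ell$; thus $h$ transports Lebesgue measure to a measure whose density relative to Lebesgue is controlled by $d\omega/d\ell$. By the Lavrentiev theorem for chord-arc curves (as already invoked for $p=2$), $\omega$ and $\ell$ are mutually absolutely continuous with $A_\infty$ Radon--Nikodym derivative, so $h$ is strongly quasisymmetric and in particular quasisymmetric. Then $C_h$ is a Banach automorphism by Theorem \ref{vod}, $f^*=C_h\circ\alpha^*$ is a Banach isomorphism, and therefore $E=(f^*)^{-1}\circ E_{\mathbb D}\circ F^*$ is a composition of Banach isomorphisms. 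This proves the corollary for all $p>1$, the inverse of $E$ being the Poisson-type operator given by integration against the harmonic measure of $\Omega$.
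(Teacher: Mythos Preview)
Your proof is correct and follows essentially the same route as the paper's (very terse) argument: factor through the Riemann map, reduce to the norm comparison $\Vert\phi\Vert_{B_p(\Gamma)}\asymp\Vert F^*\phi\Vert_{B_p(\mathbb S)}$, and obtain that comparison by combining the chord-arc bi-Lipschitz estimate for the arc-length parametrization with Theorem~\ref{vod} applied to the quasisymmetric self-map $h=\alpha^{-1}\circ f$ (quasisymmetry coming from the Lavrentiev $A_\infty$ relation between harmonic measure and arc length, exactly as the paper invokes in the $p=2$ discussion). Your write-up simply makes explicit the intermediate factorization $f^*=C_h\circ\alpha^*$ that the paper leaves implicit in the phrase ``by Theorem~\ref{vod} as before.''
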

\smallskip

For $p \geq 2$, the generalization of Theorem \ref{WZ} to this case is given in \cite[Theorem 1]{WZp}.

\subsection{$p$-Dirichlet principle}
For $p>1$, let $D_p(\Omega)$ be the (homogeneous) Sobolev space of locally integrable and weakly differentiable functions $\Phi$ on 
a planar domain $\Omega$
with $\Vert \Phi \Vert_{D_p}<\infty$, defined analogously to the case of $p=2$. 
Together with the Banach isomorphism $P:B_p(\mathbb S) \to {\rm HD}_p(\mathbb D)$,
the boundedness of the trace operator $E:D_p(\mathbb D) \to B_p(\mathbb S)$ implies
$D_p(\mathbb D)={\rm Im}\,P \oplus {\rm Ker}\,E$, and thus yields
the following $p$-Dirichlet principle, generalizing the case of $p=2$.
Through the Riemann mapping, we can assert this for any Jordan domain $\Omega \subset \widehat{\mathbb C}$.

\begin{theorem}\label{dirichletp}
For any $p>1$, the Sobolev space
$D_p(\Omega)$ admits the topological direct sum decomposition
$$
D_p(\Omega)={\rm HD}_p(\Omega) \oplus \overline{C_0^\infty(\Omega)}.
$$
\end{theorem}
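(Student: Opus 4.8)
The plan is to establish the decomposition first on the model domain $\mathbb{D}$, where Theorem \ref{PE} supplies the essential tools, and then to transport it to an arbitrary Jordan domain $\Omega$ by the conformal invariance of the $p$-Dirichlet norm. On $\mathbb{D}$, I would combine the bounded trace operator $E:D_p(\mathbb{D}) \to B_p(\mathbb{S})$ with the Poisson integral $P:B_p(\mathbb{S}) \to {\rm HD}_p(\mathbb{D}) \subset D_p(\mathbb{D})$ to form the operator $\pi=P \circ E$ on $D_p(\mathbb{D})$. Since $E \circ P={\rm id}$ on $B_p(\mathbb{S})$ by Theorem \ref{PE}, the operator $\pi$ is a bounded idempotent: $\pi^2 = P(EP)E = PE = \pi$. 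The same identity shows $E$ is surjective, whence ${\rm Im}\,\pi = P(B_p(\mathbb{S})) = {\rm HD}_p(\mathbb{D})$, while the injectivity of $P$ gives $\ker\pi = \ker E$. A bounded projection on a Banach space produces a topological direct sum of its image and its kernel, so I obtain
\begin{equation*}
D_p(\mathbb{D}) = {\rm HD}_p(\mathbb{D}) \oplus \ker E .
\end{equation*}

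The crux is then the identification $\ker E = \overline{C_0^\infty(\mathbb{D})}$. One inclusion is immediate: every $u \in C_0^\infty(\mathbb{D})$ has vanishing boundary trace, so $C_0^\infty(\mathbb{D}) \subset \ker E$, and since $\ker E$ is closed (by continuity of $E$) we get $\overline{C_0^\infty(\mathbb{D})} \subset \ker E$. For the reverse inclusion I would take $\Psi \in D_p(\mathbb{D})$ with $E\Psi=0$ and approximate it in the $D_p$-norm by a cutoff-and-mollify scheme: multiply by radial cutoffs $\chi_\varepsilon$ equal to $1$ on $\{|w|\le 1-2\varepsilon\}$ and vanishing near $\mathbb{S}$, then regularize by convolution. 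Writing $\nabla(\chi_\varepsilon\Psi) = \chi_\varepsilon\nabla\Psi + \Psi\nabla\chi_\varepsilon$, the first term tends to $\nabla\Psi$ by dominated convergence, and the whole difficulty lies in the commutator term $\Psi\nabla\chi_\varepsilon$, supported in a thin boundary annulus where $|\nabla\chi_\varepsilon|\sim\varepsilon^{-1}$. Here the hypothesis $E\Psi=0$ enters: the vanishing trace forces $\Psi$ to decay near $\mathbb{S}$ at a rate controlled by its gradient, and a Hardy-type inequality adapted to the weight $\rho_{\mathbb{D}}^{2-p}$ should make this boundary contribution tend to $0$. I expect this weighted Hardy estimate to be the main obstacle, since $\rho_{\mathbb{D}}^{2-p}$ degenerates (for $p>2$) or blows up (for $p<2$) at $\mathbb{S}$ and must be reconciled with the $\varepsilon^{-1}$ growth of the cutoff gradient.

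Finally, I would transfer the result to a general Jordan domain $\Omega \subset \widehat{\mathbb{C}}$. Fix a Riemann map $F:\mathbb{D} \to \Omega$ and the push-forward $F_*\Phi = \Phi \circ F^{-1}$. The weighting by the hyperbolic density is exactly what makes $\Vert\cdot\Vert_{D_p}$ conformally invariant: using $\rho_\Omega(F(w))|F'(w)| = \rho_{\mathbb{D}}(w)$, the identity $|\nabla(\Phi \circ F)| = |F'|\,(|\nabla\Phi|\circ F)$ valid because $F$ is holomorphic, and $dx\,dy = |F'|^2\,du\,dv$, a direct substitution gives $\Vert \Phi \circ F\Vert_{D_p(\mathbb{D})} = \Vert\Phi\Vert_{D_p(\Omega)}$. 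Hence $F_*:D_p(\mathbb{D}) \to D_p(\Omega)$ is a surjective isometric isomorphism. It carries harmonic functions to harmonic functions, so $F_*({\rm HD}_p(\mathbb{D})) = {\rm HD}_p(\Omega)$; it maps $C_0^\infty(\mathbb{D})$ onto $C_0^\infty(\Omega)$ because $F$ is a diffeomorphism of the open disk onto $\Omega$ and thus preserves compact support; and, being an isometric isomorphism, it sends closures to closures, so $F_*(\overline{C_0^\infty(\mathbb{D})}) = \overline{C_0^\infty(\Omega)}$. Applying $F_*$ to the decomposition on $\mathbb{D}$ then yields $D_p(\Omega) = {\rm HD}_p(\Omega) \oplus \overline{C_0^\infty(\Omega)}$, completing the argument.
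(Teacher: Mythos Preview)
Your proposal is correct and follows essentially the same route as the paper: construct the bounded projection $P\circ E$ on $D_p(\mathbb D)$ to split off ${\rm HD}_p(\mathbb D)$ from $\ker E$, identify $\ker E=\overline{C_0^\infty(\mathbb D)}$, and then transport to $\Omega$ by conformal invariance of the $p$-Dirichlet norm. The only difference is emphasis: where you sketch a cutoff--mollify argument with a weighted Hardy inequality for the identification $\ker E=\overline{C_0^\infty(\mathbb D)}$, the paper simply invokes the Uspenski\u{\i} trace theorem (citing Maz'ya and Mironescu--Russ), which already packages both the boundedness of $E:D_p(\mathbb D)\to B_p(\mathbb S)$ and the density of $C_0^\infty$ in its kernel.
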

\smallskip

The existence and the boundedness of the above trace operator $E$ is known as the Uspenski\u{\i} theorem.
See \cite[Theorem 1.1]{MR} and \cite[Section 10.1.1, Theorem 1]{Maz}.

We note that in contrast to the case $p=2$, $p$-Dirichlet principle does not work for the argument using
the quasiconformal reflection $\lambda$ with respect to a quasicircle $\Gamma$. This is because
we cannot estimate the $p$-Dirichlet norm of $\Phi \circ \lambda$ for $\Phi \in {\rm HD}_p(\Omega)$.

\section{The $p$-integrable Teich\-m\"ul\-ler space and $p$-Weil--Petersson curves}

In this section, we introduce the integrable Teich\-m\"ul\-ler space as the parameter space for Weil--Petersson curves in the plane. 

\subsection{$p$-integrable Teich\-m\"ul\-ler space}

For $p>1$, the $p$-integrable Teich\-m\"ul\-ler space $T_p$
is defined in the universal Teich\-m\"ul\-ler space $T$. 
Basic definitions and fundamental results on the universal Teich\-m\"ul\-ler space can be found in 
a textbook \cite{Le}.

\begin{definition}
For $p > 1$, a Beltrami coefficient $\mu \in M(\mathbb D)$, a measurable function on 
the unit disk $\mathbb D$ with $\Vert \mu \Vert_\infty<1$,
is $p$-integrable 
with respect to the hyperbolic metric $\rho_{\mathbb D}(z)|dz|$ if it satisfies
$$
\Vert \mu \Vert_p=\left(\int_{\mathbb D} |\mu(z)|^p \rho^2_{\mathbb D}(z)dxdy\right)^{1/p}<\infty.
$$
The set of all such $p$-integrable Beltrami coefficients is denoted by $M_p(\mathbb D)$.
The topology of $M_p(\mathbb D)$ is defined by the norm $\Vert \mu \Vert_p+\Vert \mu \Vert_\infty$.
The space $M_p(\mathbb D^*)$ is defined similarly on $\mathbb D^*$.
\end{definition}
\smallskip

For $p > 1$, the {\it $p$-integrable Teich\-m\"ul\-ler space} $T_p$ is the
set of all normalized (fixing $1$, $i$, and $-i$)
quasisymmetric homeomorphisms $h:\mathbb S \to \mathbb S$
that can be extended to quasiconformal self-homeomorphisms of $\mathbb D$
whose complex dilatations belong to 
$M_p(\mathbb D)$. 

The measurable Riemann mapping theorem asserts that
for any $\mu \in M(\mathbb D)$, there exists a unique normalized quasiconformal
self-homeomorphism $H^\mu$ of $\mathbb D$ whose complex dilatation is $\mu$.
By this correspondence from $\mu$
to the quasisymmetric homeomorphisms $h^\mu=H^\mu|_{\mathbb S}$, we have
the {\em Teich\-m\"ul\-ler projection} $\pi: M(\mathbb D) \to T$.
An element $h^\mu$ of $T_p$ can be represented by 
the Teich\-m\"ul\-ler equivalence class $[\mu]$ for $\mu \in M_p(\mathbb D)$.
We provide the quotient topology induced from $M_p(\mathbb D)$ for $T_p$.

The integrable Teich\-m\"ul\-ler space $T_p$ possesses the group structure as a subgroup of 
the universal Teich\-m\"ul\-ler space $T$,
where $T$ is regarded as the group of all normalized quasisymmetric homeomorphisms of $\mathbb S$.
For $\mu_1, \mu_2 \in M(\mathbb D)$ with $\pi(\mu_i)=h_i$ $(i=1,2)$, the group operation in $T$ 
is defined and denoted by
$[\mu_1] \ast [\mu_2]=\pi(\mu_1 \ast \mu_2)=h_1 \circ h_2$.

The following claim is in \cite[Chap.1, Theorem 3.8]{TT} for $p=2$ and \cite[Theorem 6.1]{WM-4} in general.

\begin{proposition}\label{group}
$T_p$ is a topological group. 
\end{proposition}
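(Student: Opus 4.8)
The plan is to verify the two topological axioms, namely continuity of the multiplication $m:T_p\times T_p\to T_p$, $([\mu],[\nu])\mapsto[\mu]\ast[\nu]$, and of the inversion $\iota:T_p\to T_p$, $[\mu]\mapsto[\mu]^{-1}$, taking the algebraic fact that $T_p$ is a subgroup of $T$ as granted. Since $T_p$ carries the quotient topology induced by the Teich\-m\"ul\-ler projection $\pi:M_p(\mathbb D)\to T_p$, the strategy is to realize $m$ and $\iota$ locally as composites $\pi\circ\widetilde m\circ(s_1\times s_2)$ and $\pi\circ\widetilde\iota\circ s$, where $\widetilde m$ and $\widetilde\iota$ are explicit lifts to $M_p(\mathbb D)$ given by the classical dilatation formulas and $s_1,s_2,s$ are continuous local sections of $\pi$. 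The Bers embedding together with the Ahlfors--Weill construction provides such continuous local sections into $M_p(\mathbb D)$ (developed in \cite{WM-4} and, for $p=2$, in \cite{TT}), so the topological statement will reduce to a statement about the lifted operations on $M_p(\mathbb D)$.

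First I would write the lifts explicitly. The Beltrami coefficient of $H^\mu\circ H^\nu$ is
$$
\widetilde m(\mu,\nu)=\mu\ast\nu=\frac{\nu+(\mu\circ H^\nu)\,\theta_\nu}{1+\overline{\nu}\,(\mu\circ H^\nu)\,\theta_\nu},\qquad \theta_\nu=\frac{\overline{(H^\nu)_z}}{(H^\nu)_z},
$$
while the coefficient of $(H^\mu)^{-1}$ is $\widetilde\iota(\mu)=-\big(\mu\,\overline{\theta_\mu}\big)\circ (H^\mu)^{-1}$, with $|\theta_\nu|=|\theta_\mu|=1$. By construction $\pi\circ\widetilde m=m\circ(\pi\times\pi)$ and $\pi\circ\widetilde\iota=\iota\circ\pi$. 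Since $\pi$ is continuous and admits continuous local sections into $M_p(\mathbb D)$, it suffices to prove that $\widetilde m$ and $\widetilde\iota$ are well defined as maps into $M_p(\mathbb D)$ and continuous for the $\Vert\cdot\Vert_p+\Vert\cdot\Vert_\infty$ topology; continuity of $m$ and $\iota$ then follows by composing with sections over a covering of $T_p\times T_p$ and $T_p$.

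The technical core is the following \emph{composition lemma}: for $\sigma\in M_p(\mathbb D)$ the pull-back operators $\mu\mapsto\mu\circ H^\sigma$ and $\mu\mapsto\mu\circ (H^\sigma)^{-1}$ map $M_p(\mathbb D)$ boundedly into itself, jointly continuously in $(\mu,\sigma)$. Granting this, the formula for $\widetilde m$ combines $\mu\circ H^\nu$ with the unimodular multiplier $\theta_\nu$, the bounded factor $\overline\nu$, and a denominator bounded away from $0$, so $\widetilde m(\mu,\nu)\in M_p(\mathbb D)$; likewise $\widetilde\iota(\mu)$ lands in $M_p(\mathbb D)$ via pull-back by $(H^\mu)^{-1}$. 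Continuity of both lifts then reduces to the joint continuity in the lemma together with the standard continuous dependence of $H^\sigma$ and $\theta_\sigma$ on $\sigma$ coming from the measurable Riemann mapping theorem.

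To prove the lemma one changes variables, reducing the claim to the quasi-invariance of the hyperbolic area element: one must bound $(\rho_{\mathbb D}^2\circ G)\,|J_G|\le C(\sigma)\,\rho_{\mathbb D}^2$ a.e. for $G=H^\sigma$ and for $G=(H^\sigma)^{-1}$. I expect this to be the main obstacle, since it is exactly where integrability of $\sigma$, not merely $\Vert\sigma\Vert_\infty<1$, becomes essential: a general quasiconformal self-map of $\mathbb D$ need not quasi-preserve hyperbolic area pointwise, so one must exploit $\sigma\in M_p(\mathbb D)$, through the associated control on the conformal distortion of $H^\sigma$ in the hyperbolic metric, to produce the constant $C(\sigma)$. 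This is the step where I would follow the estimates of \cite{WM-4}, transferring the problem if necessary to the Bers embedding, where $T_p$ is modelled on a Bergman-type Banach space and right translations are biholomorphic; continuity of the operations in that model then transports back to give the topological group structure on $T_p$.
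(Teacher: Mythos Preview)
The paper does not supply its own proof of this proposition; it merely records the statement and cites \cite[Chap.~1, Theorem~3.8]{TT} for $p=2$ and \cite[Theorem~6.1]{WM-4} for general $p>1$. Your sketch is therefore more detailed than anything appearing in the paper, and it follows the same strategy as those references: lift the group operations to $M_p(\mathbb D)$ via continuous local sections of $\pi$ (supplied by the Ahlfors--Weill construction through the Bers embedding, as in Propositions~\ref{pre-Bers} and~\ref{Bers}), and then verify continuity of the explicit dilatation formulas. You have also correctly isolated the genuine difficulty---the quasi-invariance of the hyperbolic area element under pull-back by $H^\sigma$ when $\sigma\in M_p(\mathbb D)$, which is precisely where the $p$-integrability hypothesis enters and which fails for a general quasiconformal self-map of $\mathbb D$---and you rightly defer that estimate to \cite{WM-4}. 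Since both the paper and your proposal ultimately rest on the same cited source for this step, there is no substantive discrepancy to report.
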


\subsection{Analytic Besov space}

Let $G_\mu$ denote the  
quasiconformal self-homeomorphism of 
$\widehat{\mathbb C}$ satisfying $G_\mu(\infty)=\infty$, $G_\mu(0)=0$, and $(G_\mu)'(0)=1$
whose complex dilatation
is $\mu$ on $\mathbb D^*$ and $0$ on $\mathbb D$.
This normalization is applied only for the following definition.
The pre-Schwarzian derivative map $L:M_p(\mathbb D^*) \to {\mathcal B}_p(\mathbb D)$
is defined by the
correspondence $\mu \mapsto \log (G_\mu|_{\mathbb D})'$, where 
${\mathcal B}_p(\mathbb D)$ is the Banach space of holomorphic functions $\Phi$ on $\mathbb D$ with
$$
\Vert \Phi \Vert_{{\mathcal B}_p}=\left(\int_{\mathbb D} (|\Phi'(z)| \rho^{-1}_{\mathbb D}(z))^p
\rho_{\mathbb D}^2(z)dxdy \right)^{1/p}<\infty.
$$
This is called
the {\it $p$-analytic Besov space}. See \cite[Section 5]{Zhu}.
By ignoring the difference in constant functions, ${\mathcal B}_p(\mathbb D)$ 
is a complex Banach space with norm $\Vert \Phi \Vert_{\mathcal B_p}$. 

The following claim is proved in \cite[Theorem 2.5]{Sh} for $p=2$, \cite[Theorem 2.5]{TS} for $p \geq 2$,
and \cite[Section 3]{WM-4} for general $p$.

\begin{proposition}\label{pre-Bers}
The pre-Schwarzian derivative map $L:M_p(\mathbb D^*) \to {\mathcal B}_p(\mathbb D)$ is
holomorphic with local holomorphic right inverse at every point in the image $L(M_p(\mathbb D^*))$.
\end{proposition}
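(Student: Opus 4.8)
The plan is to establish two properties separately and then combine them through the holomorphic implicit function theorem in Banach spaces: first, that $L$ is holomorphic, and second, that its Fr\'echet derivative at every point is a bounded surjection admitting a bounded linear right inverse. A holomorphic map between Banach spaces whose derivative splits in this way is a holomorphic submersion, and hence possesses a holomorphic local right inverse at each point of its image.

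First I would treat the holomorphicity. By the Ahlfors--Bers theorem on the holomorphic dependence of normalized quasiconformal mappings on their Beltrami coefficients (see \cite{Le}), the assignment $\mu \mapsto G_\mu$ is holomorphic; since $\mu$ is supported on $\mathbb D^*$, the restriction $G_\mu|_{\mathbb D}$ is conformal with non-vanishing derivative, so $\mu \mapsto \log(G_\mu|_{\mathbb D})'$ is a holomorphic family of holomorphic functions on $\mathbb D$. The substantive point is to verify that this family lands in ${\mathcal B}_p(\mathbb D)$ with locally uniform control of $\Vert \cdot \Vert_{{\mathcal B}_p}$. I would extract this from the variational representation of $G_\mu$, which writes the pre-Schwarzian derivative $(\log(G_\mu|_{\mathbb D})')'$ as a singular integral of $\mu$. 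The required estimate of this integral, weighted by $\rho_{\mathbb D}^{-1}$, in the $p$-Besov norm then reduces to the $L_p$-boundedness of the Beurling and Cauchy transforms for $1<p<\infty$, which is precisely where the hypothesis $p>1$ is used.

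Next I would compute the derivative and produce its right inverse. At the basepoint $\mu=0$ one has $G_0=\mathrm{id}$, and the Ahlfors--Bers variational formula yields $dL_0[\nu]=(\dot G[\nu]|_{\mathbb D})'$, an explicit singular-integral operator carrying $\nu\in M_p(\mathbb D^*)$ to a holomorphic function on $\mathbb D$. Going in the reverse direction, for a given $\Phi\in{\mathcal B}_p(\mathbb D)$ I would construct $\nu\in M_p(\mathbb D^*)$ by a reflection construction of Ahlfors--Weill type across $\mathbb S$ of the infinitesimal deformation prescribed by $\Phi$, arranged so that $dL_0[\nu]=\Phi$ and $\Vert\nu\Vert_p+\Vert\nu\Vert_\infty$ is controlled by $\Vert\Phi\Vert_{{\mathcal B}_p}$; this is the desired bounded section of $dL_0$, and its norm bound again rests on the weighted $L_p$ estimates above. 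For a general basepoint $\mu_0$, I would reduce to the origin using the group structure of $T_p$ from Proposition \ref{group}: writing $G_\mu=G_{\mu_0}\circ\Psi$ with $\Psi$ a normalized map of small dilatation supported on $\mathbb D^*$ and holomorphic in the new parameter, the pre-Schwarzian chain rule $\log(f\circ g)'=(\log f')\circ g+\log g'$ exhibits $L$ as equivariant up to an explicit affine cocycle, so the splitting of the derivative at the origin transports to $\mu_0$ after conjugation by bounded isomorphisms reflecting the quasiconformal invariance of the $p$-Besov and $p$-integrable norms (the boundary analogue being Theorem \ref{vod}).

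The conclusion then follows: $L$ is holomorphic and $dL_\mu$ is a bounded split surjection for every $\mu$, hence $L$ is a holomorphic submersion and admits a holomorphic local right inverse $s$ with $L\circ s=\mathrm{id}$ near each point $L(\mu)$ of the image. I expect the main obstacle to be the construction and estimation of the right inverse: the delicate quantitative claim is that the singular-integral operator $dL_0$ is surjective onto ${\mathcal B}_p(\mathbb D)$ with a bounded section measured in the hyperbolically weighted $p$-Besov norm, a statement that genuinely breaks down at $p=1$ and for which the $L_p$-theory of Calder\'on--Zygmund operators is indispensable.
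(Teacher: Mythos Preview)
The paper does not supply its own proof of this proposition; it simply cites \cite{Sh} for $p=2$, \cite{TS} for $p\geq 2$, and \cite{WM-4} for general $p>1$. So there is no in-paper argument to match against, only the approach in those references.

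Your plan is sound in outline but organized differently from the cited sources. You aim to produce the local section abstractly: compute $dL_\mu$, exhibit a bounded linear right inverse for it, and invoke the Banach implicit function theorem. The references instead build the section by hand. Given $\Phi\in\mathcal{B}_p(\mathbb D)$ near $L(\mu_0)$, one writes down the Becker-type quasiconformal extension of the conformal map whose pre-Schwarzian is $\Phi$, and checks directly that the resulting complex dilatation lies in $M_p(\mathbb D^*)$ with holomorphic dependence on $\Phi$. This explicit route avoids two steps that your scheme requires: a separate surjectivity analysis of $dL_0$, and the transport to a general basepoint via the group structure and the pre-Schwarzian chain rule. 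The latter transport is where your sketch is thinnest: the cocycle $\Phi\mapsto(\log f')\circ g+\log g'$ must be shown to act as a bounded affine isomorphism of $\mathcal{B}_p(\mathbb D)$ when $g$ is quasiconformal on $\mathbb D$, and this interior statement is not the same as the boundary result in Theorem~\ref{vod}, so it would need its own justification. Both approaches rest on the same analytic engine---the $L_p$ theory of the Cauchy and Beurling transforms for $p>1$---but the direct Becker construction is shorter and is what the literature actually does.
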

\smallskip

Let $J(\Phi)=\Phi''-(\Phi')^2/2$. Then, $J(\Phi)$ for 
$\Phi=\log (G_\mu|_{\mathbb D})' \in {\mathcal B}_p(\mathbb D)$ is the Schwarzian derivative 
of $G_\mu|_{\mathbb D}$, and it belongs to
the Banach space ${\mathcal A}_p(\mathbb D)$ of holomorphic functions $\Psi$
on $\mathbb D$ with
$$
\Vert \Psi \Vert_{{\mathcal A}_p}=\left(\int_{\mathbb D} (|\Psi(z)| \rho^{-2}_{\mathbb D}(z))^p
\rho_{\mathbb D}^2(z)dxdy \right)^{1/p}<\infty.
$$
Moreover, $J\circ L$ is factored through the Teich\-m\"ul\-ler projection $\pi:M_p(\mathbb D^*) \to T_p$ to induce
a well-defined injection $\alpha:T_p \to {\mathcal A}_p(\mathbb D)$ such that
$\alpha \circ \pi=J \circ L$. This map $\alpha$ is called the {\it Bers embedding}.

\begin{proposition}\label{Bers}
The Bers embedding $\alpha$ is
a homeomorphism onto a contractible domain $\alpha(T_p)=J \circ L(M_p(\mathbb D^*))$ in 
${\mathcal A}_p(\mathbb D)$.
\end{proposition}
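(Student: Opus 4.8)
The plan is to derive all three assertions from one structural fact: the holomorphic map $J\circ L\colon M_p(\mathbb D^*)\to\mathcal A_p(\mathbb D)$ admits a local holomorphic right inverse at every point of its image. First, since $\pi\colon M_p(\mathbb D^*)\to T_p$ is a surjective quotient projection and $\alpha\circ\pi=J\circ L$, the image identity $\alpha(T_p)=J\circ L(M_p(\mathbb D^*))$ is immediate. Continuity of $\alpha$ follows because $J\circ L$ is continuous (it is the composition of the holomorphic map $L$ from Proposition \ref{pre-Bers} with the polynomial map $J$) and $T_p$ carries the quotient topology; and since $M_p(\mathbb D^*)$ is convex, hence connected, its continuous image $\alpha(T_p)$ is connected.

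The core step is to manufacture those local right inverses. A local holomorphic right inverse of $L$ near any $\Phi_0=L(\mu_0)$ is furnished by Proposition \ref{pre-Bers}, so it remains to invert $J$ locally near $\Phi_0$. I would do this via the holomorphic inverse function theorem in Banach spaces. The derivative $DJ_{\Phi_0}(\phi)=\phi''-\Phi_0'\,\phi'$ is a linear differential operator whose right inverse is explicit through the integrating factor $e^{\Phi_0}=f_0'$, where $f_0=G_{\mu_0}|_{\mathbb D}$: setting $g=\phi'$ and solving $g'-\Phi_0'g=\psi$ gives $\psi\mapsto\int_0^z f_0'(\zeta)\int_0^\zeta \psi(w)/f_0'(w)\,dw\,d\zeta$. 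Once this operator is shown to be bounded from $\mathcal A_p(\mathbb D)$ into $\mathcal B_p(\mathbb D)$, the operator $DJ_{\Phi_0}$ is surjective with a bounded right inverse, so $J$ is a holomorphic submersion at $\Phi_0$ and admits a local holomorphic right inverse $s_J$ with $s_J(J(\Phi_0))=\Phi_0$. The composition $s_L\circ s_J$ is then the desired local holomorphic right inverse of $J\circ L$. Consequently $\alpha(T_p)$ is open, since each of its points is interior (covered by the domain of such a right inverse $s$, because that domain equals $J\circ L(s(\cdot))$); moreover $\pi\circ(s_L\circ s_J)$ is a continuous local section of $\alpha$, and together with the given injectivity this forces $\alpha^{-1}$ to be continuous. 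Hence $\alpha$ is a homeomorphism onto the open connected domain $\alpha(T_p)$.

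For contractibility I would transport the star-shaped structure of $M_p(\mathbb D^*)$. The ball $M_p(\mathbb D^*)$ is star-shaped about $0$ via $\mu\mapsto t\mu$ for $t\in[0,1]$, and $J\circ L(0)=0$ because $G_0=\mathrm{id}$. Choosing a global continuous section $\sigma\colon\alpha(T_p)\to M_p(\mathbb D^*)$ of $J\circ L$, the map $H(\Psi,t)=J\circ L\bigl(t\,\sigma(\Psi)\bigr)$ stays in $\alpha(T_p)$ and interpolates from $H(\Psi,1)=\Psi$ to $H(\Psi,0)=0$, giving the contraction. Such a global section is supplied by the conformally natural barycentric (Douady--Earle) extension composed with $\alpha^{-1}$, provided it preserves $p$-integrability; alternatively one may simply invoke the known contractibility of $T_p$ and the homeomorphism $\alpha$ just established.

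The main obstacle is the norm estimate in the second paragraph: bounding the $\mathcal B_p$-norm of the iterated-integral operator by the $\mathcal A_p$-norm of its input, uniformly near $\Phi_0$. This is exactly where the hyperbolic weights defining $\mathcal A_p$ and $\mathcal B_p$ and the restriction $p>1$ must be used, and it is the analytic heart of the proof, the remaining topological bookkeeping being routine. A secondary subtlety is confirming that the barycentric extension lands in $M_p$, which can be sidestepped by the alternative route to contractibility noted above.
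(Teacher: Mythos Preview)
The paper does not prove this proposition; it records the statement and refers to \cite{Cu, Gu, Sh, TT, TS} and \cite[Theorem~4.1]{WM-1} for the argument. Your outline must therefore be measured against those references rather than anything in the paper itself.

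Your strategy of inverting $L$ and $J$ separately and composing the sections is coherent, but it is not the route taken in the cited literature. There the local right inverse of $J\circ L$ is built in a single step by an Ahlfors--Weill type reflection: a Schwarzian $\Psi\in\mathcal A_p(\mathbb D)$ of small norm is sent directly to the Beltrami coefficient $\mu(z)\sim -\tfrac12\Psi(1/\bar z)\rho_{\mathbb D}^{-2}(1/\bar z)$ on $\mathbb D^*$, and the verification that $\mu\in M_p(\mathbb D^*)$ is essentially the definition of $\Vert\Psi\Vert_{\mathcal A_p}$; points away from the origin are then handled by right translation in the group $T_p$. This bypasses your ``main obstacle'' entirely. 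The iterated-integral bound you isolate is plausible---at $\Phi_0=0$ it reduces to a standard weighted-Bergman integration inequality, and for general $\Phi_0\in L(M_p(\mathbb D^*))$ the distortion bounds on $\log f_0'$ for univalent $f_0$ should control the factor $f_0'$---but carrying it through is genuinely more work than the Ahlfors--Weill construction, and it is not a lemma available off the shelf.

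Your contractibility argument rests on a second unproved input: that the Douady--Earle extension yields a continuous global section $T_p\to M_p(\mathbb D^*)$. This is true and is in fact one of the building blocks of the theory in the cited references, but it is itself a substantial theorem, not a by-product of what you have set up. Your fallback---transport contractibility of $T_p$ through the homeomorphism $\alpha$---is not circular, since one can prove $T_p$ contractible via the Douady--Earle section without mentioning $\alpha$; but that is the same missing ingredient in another wrapper. In summary, the skeleton is sound and you have located the two load-bearing analytic facts correctly, but neither is established, and the first is not the standard tool for this theorem.
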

\smallskip

This provides the complex Banach structure for $T_p$. See \cite{Cu, Gu, Sh, TT, TS} for pioneering work
and \cite[Theorem 4.1]{WM-1} for the general case.

We mention the relationship among ${\mathcal B}_p(\mathbb D)$, ${\rm HD}_p(\mathbb D)$, and $B_p(\mathbb S)$.
For $\phi \in B_p(\mathbb S)$, the {\it Hilbert transform} is defined by the singular integral
$$
{\mathcal H}(\phi)(x)=\frac{1}{\pi i}\,{\rm p.v} \int_{\mathbb S} \frac{\phi(\zeta)}{\zeta-x}d \zeta \quad (x \in \mathbb S).
$$
Then, ${\mathcal H}(\phi)$ belongs to $B_p(\mathbb S)$.
Moreover, the {\it Szeg\"o projection} is defined by the Cauchy integral
$$
{\mathcal S}(\phi)(z)=\frac{1}{2\pi i} \int_{\mathbb S} \frac{\phi(\zeta)}{\zeta-z}d \zeta \quad (z \in \mathbb D).
$$
Then, ${\mathcal S}(\phi)$ belongs to the $p$-analytic Besov space $\mathcal B_p(\mathbb D)$.

\begin{proposition}
The following are satisfied:
\begin{itemize}
\item[$(1)$]
${\mathcal H}:B_p(\mathbb S) \to B_p(\mathbb S)$ is a Banach automorphism with ${\mathcal H} \circ {\mathcal H}=I$;
\item[$(2)$]
${\mathcal S}:B_p(\mathbb S) \to \mathcal B_p(\mathbb D)$ is a bounded linear map such that 
$E \circ {\mathcal S}:B_p(\mathbb S) \to B_p(\mathbb S)$ is a bounded projection onto 
$E(\mathcal B_p(\mathbb D))$ satisfying 
$E \circ {\mathcal S}=\frac{1}{2}(I +\mathcal H)$.
\end{itemize}
\end{proposition}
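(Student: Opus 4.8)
The plan is to reduce every assertion to the boundedness of the analytic projection, which follows at once from Theorem~\ref{PE} together with an elementary pointwise inequality, and then to read off the operator identities from the Plemelj--Sokhotski jump relations. The key structural remark is that a complex-valued harmonic $\Phi \in {\rm HD}_p(\mathbb D)$ decomposes, modulo constants, as $\Phi = f + \bar g$ with $f, g$ holomorphic, and that $\partial_z \Phi = f'$ while $\partial_{\bar z} \Phi = \overline{g'}$. Hence $|\nabla \Phi|^2 = 2(|f'|^2 + |g'|^2)$, so that $|f'| \le |\nabla \Phi|/\sqrt{2}$ and $|g'| \le |\nabla \Phi|/\sqrt{2}$ pointwise on $\mathbb D$. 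Comparing the integrands of $\Vert \cdot \Vert_{\mathcal B_p}$ and $\Vert \cdot \Vert_{D_p}$, these inequalities show that the holomorphic projection $\pi_+:\Phi \mapsto f$ and the anti-holomorphic projection $\pi_-:\Phi \mapsto \bar g$ are bounded from ${\rm HD}_p(\mathbb D)$ onto $\mathcal B_p(\mathbb D)$ and $\overline{\mathcal B_p(\mathbb D)}$, each with norm at most $1/\sqrt 2$. In particular ${\rm HD}_p(\mathbb D) = \mathcal B_p(\mathbb D) \oplus \overline{\mathcal B_p(\mathbb D)}$ is a topological direct sum, and the boundary extension $E$ restricts to a bounded map $\mathcal B_p(\mathbb D) \to B_p(\mathbb S)$.

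For part $(2)$ I first identify $\mathcal S$ with $\pi_+ \circ P$. Writing $\phi = \sum_n c_n \zeta^n$, the Poisson integral is $P(\phi) = \sum_{n \ge 0} c_n z^n + \sum_{n<0} c_n \bar z^{-n}$, whose holomorphic part is precisely $\sum_{n \ge 0} c_n z^n = \mathcal S(\phi)$; thus $\mathcal S = \pi_+ \circ P$. Since $P: B_p(\mathbb S) \to {\rm HD}_p(\mathbb D)$ is a Banach isomorphism by Theorem~\ref{PE} and $\pi_+$ is bounded, $\mathcal S: B_p(\mathbb S) \to \mathcal B_p(\mathbb D)$ is bounded. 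Next, using the mutual inverse relation $PE = {\rm id}$ on ${\rm HD}_p(\mathbb D)$ and the idempotency $\pi_+^2 = \pi_+$, I compute $E\mathcal S \circ E\mathcal S = E\pi_+(PE)\pi_+ P = E\pi_+^2 P = E\mathcal S$, so $E\mathcal S$ is a bounded idempotent on $B_p(\mathbb S)$. Its image equals $E\pi_+ P(B_p(\mathbb S)) = E\pi_+({\rm HD}_p(\mathbb D)) = E(\mathcal B_p(\mathbb D))$, so $E\mathcal S$ is a bounded projection onto $E(\mathcal B_p(\mathbb D))$, as claimed.

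It remains to produce the identity $E\mathcal S = \tfrac12(I + \mathcal H)$ and part $(1)$. By the Plemelj--Sokhotski formula, the non-tangential boundary value of the Cauchy integral $\mathcal S(\phi)$ from inside $\mathbb D$ equals $\tfrac12 \phi + \tfrac12 \mathcal H(\phi)$, which is exactly $E\mathcal S = \tfrac12(I + \mathcal H)$; concretely, evaluating on the monomials $\zeta^n$ gives $\mathcal H(\zeta^n) = \operatorname{sgn}(n)\,\zeta^n$ (the constant mode being annihilated in $B_p(\mathbb S)$) and $E\mathcal S(\zeta^n) = \zeta^n$ for $n \ge 0$ and $0$ for $n < 0$, confirming the identity on trigonometric polynomials. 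As $E\mathcal S$ and $I$ are bounded and trigonometric polynomials are dense in $B_p(\mathbb S)$, the singular integral $\mathcal H$ extends to the bounded operator $2E\mathcal S - I$; this yields both the boundedness of $\mathcal H$ on $B_p(\mathbb S)$ and the identity in part $(2)$. Finally $\mathcal H^2 = (2E\mathcal S - I)^2 = 4(E\mathcal S)^2 - 4E\mathcal S + I = I$ because $E\mathcal S$ is idempotent, so $\mathcal H$ is its own inverse and therefore a Banach automorphism, establishing part $(1)$.

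Since the only substantial analytic ingredient, Theorem~\ref{PE}, is already at hand, I expect the real work to lie in the last step: namely, justifying the boundary-value identity $E\mathcal S = \tfrac12(I + \mathcal H)$ rigorously for the rough functions of $B_p(\mathbb S)$---either through the jump relations or by Abel summation of the Fourier series---and controlling the passage from trigonometric polynomials to all of $B_p(\mathbb S)$. The latter requires both the density of trigonometric polynomials in $B_p(\mathbb S)$ and the continuity of the singular integral $\mathcal H$ on this space, so that it genuinely coincides with the bounded extension $2E\mathcal S - I$ rather than merely agreeing with it on a dense subspace.
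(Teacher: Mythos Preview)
The paper states this proposition without proof, treating it as a classical fact about the Hilbert transform and Szeg\H{o} projection on Besov spaces; there is no argument in the paper to compare against.

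Your proof is sound and self-contained relative to the paper's ingredients. The pointwise identity $|\nabla\Phi|^2=2(|f'|^2+|g'|^2)$ for the holomorphic/anti-holomorphic decomposition is correct and gives the boundedness of $\pi_\pm$ cleanly; the identification $\mathcal S=\pi_+\circ P$ via Fourier coefficients is valid, and the algebraic derivation of idempotency and of $\mathcal H^2=I$ from $(2E\mathcal S-I)^2=I$ is clean. One small correction: with the paper's normalization one actually gets $\mathcal H(1)=1$, not $0$, but as you note this is irrelevant modulo constants.

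The concerns you flag in the last paragraph are genuine but easily closed. Density of trigonometric polynomials in $B_p(\mathbb S)$ is standard. For the agreement of the singular integral with the bounded operator $2E\mathcal S-I$, you do not need to assume continuity of $\mathcal H$ on $B_p$ (which would be circular); instead use the continuous embedding $B_p(\mathbb S)\hookrightarrow {\rm BMO}(\mathbb S)\hookrightarrow L_q(\mathbb S)$ for any $q<\infty$ (the first inclusion is noted in the paper). Then $\phi_n\to\phi$ in $B_p$ forces $\phi_n\to\phi$ in $L_q$, hence $\mathcal H\phi_n\to\mathcal H\phi$ in $L_q$ by the classical M.~Riesz theorem, while $(2E\mathcal S-I)\phi_n\to(2E\mathcal S-I)\phi$ in $B_p$; the two limits coincide as $L_q$-functions, so $\mathcal H=2E\mathcal S-I$ on all of $B_p(\mathbb S)$.
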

\smallskip

Moreover, defining
$$
{\mathcal S}^*(\phi)(z)=\frac{1}{2\pi i} \int_{\mathbb S} \frac{\phi(\zeta)}{\zeta-z}d \zeta \quad (z \in \mathbb D^*)
$$
where the orientation of the line integral on $\mathbb S$ is taken in the opposite direction for $\mathcal S$, we see that
$E \circ {\mathcal S}^*$ coincides with the bounded projection $\frac{1}{2}(I -\mathcal H)$ 
onto $E(\mathcal B_p(\mathbb D^*))$.
Then, the identification under the boundary extension operator $E$ yields
\begin{equation}\label{identify}
{\rm HD}_p(\mathbb D) \cong B_p(\mathbb S) \cong \mathcal B_p(\mathbb D) \oplus \mathcal B_p(\mathbb D^*).
\end{equation}
For $z \in \mathbb D$, $z^*=\lambda(z) \in \mathbb D^*$ 
denotes the reflection point with respect to $\mathbb S$. For $\Phi \in 
\mathcal B_p(\mathbb D^*)$, let $\Phi^*(z):=\Phi(z^*)$ be 
the anti-holomorphic function defined on $\mathbb D$,
and the set of all such functions $\Phi^*$ coincides with
the complex conjugate $\overline{\mathcal B_p(\mathbb D)}$.
Thus, $\mathcal B_p(\mathbb D^*)$ and $\overline{\mathcal B_p(\mathbb D)}$ are 
identified by the conformal reflection $\lambda$, and the
above identification \eqref{identify} is nothing but the decomposition 
$$
{\rm HD}_p(\mathbb D) \cong \mathcal B_p(\mathbb D) \oplus \overline{\mathcal B_p(\mathbb D)}
$$
of a harmonic function into holomorphic and anti-holomorphic parts.

\subsection{$p$-Weil--Petersson curves}

We define Weil--Petersson curves and parametrize them by
the Teich\-m\"ul\-ler space $T_p$. For any $\mu \in M_p(\mathbb D)$, let $F_\mu$ denote
the quasiconformal self-homeomorphism of $\widehat{\mathbb C}$ whose complex dilatation is $\mu$ on $\mathbb D$ and
$0$ on $\mathbb D^*$
with the normalization
fixing $1$, $i$, and $-i$.

\begin{definition}
For $\mu \in M_p(\mathbb D^*)$, the image $\Gamma$ of $\mathbb S$
under the quasiconformal homeomorphism $F_\mu$ is called a $p$-Weil--Petersson curve. 
\end{definition}
\smallskip

In fact, $F_\mu|_{\mathbb S}=F_\nu|_{\mathbb S}$ if and only if $\pi(\mu)=\pi(\nu)$, and thus
a $p$-Weil--Petersson curve $\Gamma$ is determined by $[\mu] \in T_p$. 

A $p$-Weil--Petersson curve is a chord-arc curve. There are a couple of explanations for this.
One is to show the inclusion $B_p(\mathbb S) \subset {\rm VMO}(\mathbb S)$ (see \cite[Proposition 2.2]{WM-0}).
Another is to show that $|\mu(z)|^2 \rho_{\mathbb D^*}(z)dxdy$ is a Carleson measure on $\mathbb D^*$
for $\mu \in M_p(\mathbb D^*)$.

The following geometric characterization of $2$-Weil--Petersson curve is given by Bishop \cite[Theorem 1.3]{Bi1}.
The generalization to any $p>1$ is a problem.

\begin{theorem}
A rectifiable closed Jordan curve $\Gamma$ is a $2$-Weil--Petersson curve if and only for any $z_0 \in \Gamma$,
the $n$-gon $\Gamma_{n}(z_0)$ made of evenly distributed $n$ points $z_0, z_1, \ldots, z_n=z_0$ on $\Gamma$
with respect to the arc length satisfies
$$
\sum_{n=1}^\infty 2^n\{\ell(\Gamma)-\ell(\Gamma_{n}(z_0))\}<\infty.
$$ 
\end{theorem}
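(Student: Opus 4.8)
The plan is to reduce the polygonal condition to Bishop's characterization of the $2$-Weil--Petersson class by summable dyadic Jones $\beta$-numbers, passing between the two through a single-scale length increment. I read $\Gamma_n(z_0)$ as the polygon inscribed on the $2^n$ arc-length-equidistributed points (the weight $2^n$ in the series makes this the natural dyadic normalization), so that refining from level $n$ to level $n+1$ bisects each subarc. A Weil--Petersson curve is chord-arc, as recorded above; hence the arc-length and conformal parametrizations are bi-Lipschitz comparable, every subarc $I_{n,j}$ has diameter $\asymp 2^{-n}\ell(\Gamma)$, and the comparison constants are uniform in $z_0$ --- this is what makes the stated condition basepoint-free.

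First I would telescope the defect. Writing $|I|$ for the arc length of a subarc $I$ and $c(I)$ for its chord, refinement only increases the inscribed length while keeping it below $\ell(\Gamma)$, and $\ell(\Gamma_m)\to\ell(\Gamma)$ as $m\to\infty$ since $\Gamma$ is rectifiable, so
$$\ell(\Gamma)-\ell(\Gamma_n(z_0))=\sum_{m\ge n}\bigl(\ell(\Gamma_{m+1}(z_0))-\ell(\Gamma_m(z_0))\bigr),$$
and each increment is the total gain from inserting the arc-midpoints $w_{m,j}$ as new vertices. For a single subarc the gain is
$$|c(I'_{m,j})|+|c(I''_{m,j})|-|c(I_{m,j})|\ \asymp\ \frac{\delta_{m,j}^2}{|I_{m,j}|}\ \asymp\ 2^{-m}\ell(\Gamma)\,\beta_{m,j}^2,\qquad \beta_{m,j}:=\frac{\delta_{m,j}}{|I_{m,j}|},$$
where $\delta_{m,j}=\operatorname{dist}(w_{m,j},\,c(I_{m,j}))$ is the midpoint displacement; summing over $j$ gives $\ell(\Gamma_{m+1})-\ell(\Gamma_m)\asymp 2^{-m}\ell(\Gamma)\sum_j\beta_{m,j}^2$. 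Substituting the telescope and exchanging the order of summation (Fubini for a nonnegative series),
$$\sum_{n\ge1}2^n\bigl(\ell(\Gamma)-\ell(\Gamma_n(z_0))\bigr)\ \asymp\ \ell(\Gamma)\sum_{m\ge1}2^{-m}\Bigl(\sum_{n=1}^{m}2^n\Bigr)\sum_j\beta_{m,j}^2\ \asymp\ \ell(\Gamma)\sum_{m,j}\beta_{m,j}^2,$$
since $\sum_{n=1}^m2^n\asymp2^m$. Hence the stated series converges precisely when the dyadic displacement sum $\sum_{m,j}\beta_{m,j}^2$ is finite, and it remains to identify this with the Jones $\beta$-sum and to invoke the equivalence ``$\Gamma$ is $2$-Weil--Petersson iff $\sum_{m,j}\beta_\Gamma(3I_{m,j})^2<\infty$'' from \cite{Bi1}.

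The hard part will be exactly this last comparison, carried out without assuming regularity in the ``if'' direction. The midpoint displacement bounds Jones' $\beta_\Gamma$ from below but not obviously from above --- a subarc may hug its chord yet bulge away from the arc-midpoint, and fine oscillation inside a thin strip inflates the arc-chord defect beyond the single-scale displacement. The standard remedy is a \emph{multiscale} comparison of the two nonnegative double series, summing one-point displacements over all dyadic descendants of $I_{m,j}$ and using that for a chord-arc curve the nested midpoints are well-separated with geometrically decaying subarc lengths. For the ``only if'' direction chord-arc is in hand from the start, so the constants are uniform; for the ``if'' direction I would first extract chord-arc from the finiteness of the low-level terms (the displacement sum forces a traveling-salesman bound, hence rectifiability together with the arc-chord estimate), after which the comparison applies and Bishop's equivalence upgrades the finite $\beta$-sum to membership in $T_2$.

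An alternative route avoids $\beta$-numbers altogether: one relates the increments $\ell(\Gamma_{m+1})-\ell(\Gamma_m)$ directly to second differences of the Riemann map $f\colon\mathbb D^*\to\Omega_2$ and recognizes $\sum_{m,j}\beta_{m,j}^2$ as a discretization of $\int_{\mathbb D^*}|(\log f')'|^2\,dxdy=\Vert\log f'\Vert_{\mathcal B_2}^2$, which is finite exactly when $[\mu]\in T_2$ by the pre-Schwarzian description of $T_2$ in Proposition \ref{pre-Bers}. The technical cost there is controlling the nonuniform conformal grid against the arc-length-equidistributed vertices, again through the chord-arc ($A_\infty$) comparability of the harmonic and arc-length measures; I expect this to be the genuine obstacle in either approach.
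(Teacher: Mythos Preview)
The paper does not prove this theorem. It is stated purely as a quotation of Bishop's result \cite[Theorem 1.3]{Bi1}, accompanied only by the remark that ``the generalization to any $p>1$ is a problem''; no argument is supplied. So there is no proof in the paper to compare your proposal against.

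That said, your sketch is broadly the skeleton of Bishop's own proof. Your reinterpretation of $\Gamma_n(z_0)$ as the dyadic $2^n$-gon is the correct reading (and matches Bishop's formulation; the paper's phrase ``$n$-gon \ldots made of evenly distributed $n$ points'' is imprecise, since with literally $n$ vertices the defect is of order $n^{-2}$ for a $C^2$ curve and the weighted series would diverge). The telescoping of the defect into single-level increments, the identification of each increment with a sum of midpoint-displacement squares $\sum_j \beta_{m,j}^2$, and the Fubini step collapsing the double sum to $\sum_{m,j}\beta_{m,j}^2$ are all exactly as in \cite{Bi1}. You also correctly isolate where the real work lies: the two-sided comparison between the midpoint $\beta_{m,j}$ and Jones' $\beta_\Gamma(3I_{m,j})$, and the a priori extraction of chord-arc regularity in the ``if'' direction so that the comparison constants are uniform. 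Both of these are handled in \cite{Bi1} and are not reproduced in the present paper; your proposal is honest in labeling them as the hard part rather than claiming them.
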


For $\mu_1 \in M_p(\mathbb D)$ and $\mu_2 \in M_p(\mathbb D^*)$,
let $G(\mu_1, \mu_2)$ be the normalized quasiconformal self-homeomorphism of $\widehat{\mathbb C}$
whose complex dilatation is $\mu_1$ on $\mathbb D$ and $\mu_2$ on $\mathbb D^*$. 
Let $\bar \mu_2$ be the complex dilatation in $M_p(\mathbb D)$ given by the reflection of $\mu_2$,
and $\bar \mu_2^{-1}$ the complex dilatation of the normalized quasiconformal self-homeomorphism $(H^{\bar \mu_2})^{-1}$ of $\mathbb D$.
Moreover, $\mu_1 \ast \bar \mu_2^{-1}$ denotes the complex dilatation of $H^{\mu_1} \circ H^{\bar \mu_2^{-1}}$.

\begin{proposition}\label{double}
For $\mu_1 \in M_p(\mathbb D)$ and $\mu_2 \in M_p(\mathbb D^*)$, a quasicircle $\Gamma=G(\mu_1, \mu_2)(\mathbb S)$
is a $p$-Weil--Petersson curve, and it coincides with $F_{\mu_1 \ast \bar \mu_2^{-1}}(\mathbb S)$.
\end{proposition}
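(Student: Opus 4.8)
The plan is to identify $\Gamma$ with $F_{\mu_1 \ast \bar\mu_2^{-1}}(\mathbb S)$ by way of the conformal welding: I would exhibit the normalized Riemann maps onto the two complementary domains of $\Gamma$, read off the induced welding homeomorphism, match it with that of $F_{\mu_1 \ast \bar\mu_2^{-1}}(\mathbb S)$, and then invoke the uniqueness of the normalized welding to conclude that the two curves coincide.

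Writing $G = G(\mu_1, \mu_2)$, $\Omega_1 = G(\mathbb D)$, $\Omega_2 = G(\mathbb D^*)$, and $\nu = \mu_1 \ast \bar\mu_2^{-1}$, I would first construct conformal maps onto $\Omega_1$ and $\Omega_2$. Since $G|_{\mathbb D}$ and $H^{\mu_1}$ carry the same complex dilatation $\mu_1$, their composition $F_1 := G|_{\mathbb D} \circ (H^{\mu_1})^{-1}$ has vanishing complex dilatation and is thus a conformal map $\mathbb D \to \Omega_1$. For the exterior, I would set $K := \lambda \circ H^{\bar\mu_2} \circ \lambda$, which is the normalized quasiconformal self-homeomorphism of $\mathbb D^*$ whose complex dilatation is the reflection of $\bar\mu_2$, namely $\mu_2$; then $G|_{\mathbb D^*}$ and $K$ share the dilatation $\mu_2$, so $F_2 := G|_{\mathbb D^*} \circ K^{-1}$ is conformal $\mathbb D^* \to \Omega_2$. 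Because $G$, $H^{\mu_1}$, $H^{\bar\mu_2}$, and $\lambda$ all fix $1, i, -i$, both $F_1$ and $F_2$ fix these three boundary points, so they are precisely the normalized maps that define the welding of $\Gamma$.

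Next I would compute the welding. On $\mathbb S$ one has $H^{\mu_1}|_{\mathbb S} = h^{\mu_1}$, while $\lambda|_{\mathbb S} = {\rm id}$ gives $K|_{\mathbb S} = h^{\bar\mu_2}$; hence $F_1|_{\mathbb S} = G|_{\mathbb S} \circ (h^{\mu_1})^{-1}$ and $F_2|_{\mathbb S} = G|_{\mathbb S} \circ (h^{\bar\mu_2})^{-1}$. Cancelling the common factor $G|_{\mathbb S}$ yields the welding homeomorphism $h_\Gamma = F_1^{-1} \circ F_2|_{\mathbb S} = h^{\mu_1} \circ (h^{\bar\mu_2})^{-1} = h^{\mu_1 \ast \bar\mu_2^{-1}}$, where the last equality is the definition of the group operation $\ast$ on $T$ together with $(h^{\bar\mu_2})^{-1} = h^{\bar\mu_2^{-1}}$. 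Running the identical computation with $(\mu_1, \mu_2)$ replaced by $(\nu, 0)$ shows that $F_\nu(\mathbb S)$ likewise has welding $h^\nu$. Since $\Gamma$ and $F_\nu(\mathbb S)$ are both normalized to pass through $1, i, -i$ and share the welding $h^\nu$, the uniqueness of the normalized conformal welding forces $\Gamma = F_\nu(\mathbb S)$.

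It would then remain to check that $\nu \in M_p(\mathbb D)$, so that $F_\nu(\mathbb S)$ is genuinely a $p$-Weil--Petersson curve. Here $\mu_1 \in M_p(\mathbb D)$ by hypothesis, and reflection preserves the hyperbolic $p$-norm and the supremum norm, so $\bar\mu_2 \in M_p(\mathbb D)$ whenever $\mu_2 \in M_p(\mathbb D^*)$; since $T_p$ is a topological group by Proposition \ref{group}, it is closed under inversion and under $\ast$, and therefore $\nu = \mu_1 \ast \bar\mu_2^{-1} \in M_p(\mathbb D)$. Consequently $\Gamma = F_\nu(\mathbb S)$ is a $p$-Weil--Petersson curve, which completes the argument. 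The step I expect to be the main obstacle is the bookkeeping for the exterior domain: verifying carefully that $\lambda \circ H^{\bar\mu_2} \circ \lambda$ is indeed the normalized $\mathbb D^*$-self-map with complex dilatation $\mu_2$ and that its boundary values reduce to $h^{\bar\mu_2}$ via $\lambda|_{\mathbb S} = {\rm id}$; once this reflection identity is secured, every remaining step is either a cancellation of complex dilatations or a direct appeal to the group structure of $T_p$.
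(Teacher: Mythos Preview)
Your argument is correct. The core construction you use---factoring $G|_{\mathbb D^*}$ through the reflected map $K=\lambda\circ H^{\bar\mu_2}\circ\lambda$---is exactly the one underlying the paper's proof, but you package it differently. The paper observes in one line that the single global map $G(\mu_1,\mu_2)\circ H^{\bar\mu_2^{-1}}$ (with $H^{\bar\mu_2^{-1}}$ tacitly extended to $\widehat{\mathbb C}$ by reflection) is conformal on $\mathbb D^*$, has dilatation $\mu_1\ast\bar\mu_2^{-1}$ on $\mathbb D$, and is normalized, hence equals $F_{\mu_1\ast\bar\mu_2^{-1}}$; applying this to $\mathbb S$ gives $\Gamma=F_{\mu_1\ast\bar\mu_2^{-1}}(\mathbb S)$ immediately, with no appeal to welding uniqueness. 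You instead split the construction into the two normalized Riemann maps $F_1,F_2$, compute $h_\Gamma=h^{\mu_1}\circ(h^{\bar\mu_2})^{-1}$, and invoke uniqueness of the normalized conformal welding. This is a legitimate detour and has the side benefit of making Proposition~\ref{compd} fall out along the way, but it is longer than necessary: once you have $F_2=G|_{\mathbb D^*}\circ K^{-1}$ conformal on $\mathbb D^*$, you could simply glue it to $G|_{\mathbb D}\circ H^{\bar\mu_2^{-1}}$ on $\mathbb D$ (they agree on $\mathbb S$) and recognize the result as $F_{\mu_1\ast\bar\mu_2^{-1}}$.

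One small point: you write ``$\nu=\mu_1\ast\bar\mu_2^{-1}\in M_p(\mathbb D)$'', but what Proposition~\ref{group} literally gives is $[\nu]\in T_p$, i.e.\ some Teichm\"uller-equivalent representative lies in $M_p(\mathbb D)$. That is all you need, since $F_\nu(\mathbb S)$ depends only on $[\nu]$, so the conclusion that $\Gamma$ is a $p$-Weil--Petersson curve stands.
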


\begin{proof}
By $G(\mu_1, \mu_2) \circ H^{\bar \mu_2^{-1}}=F_{\mu_1 \ast \bar \mu_2^{-1}}$, we have the statements. 
\end{proof}

\subsection{Variation of the transmission operators}
We consider the variation of the transmission operator $\Theta_\Gamma$ when $\Gamma$ varies.
For $[\mu] \in T_p$,  
let $\Gamma=F_\mu(\mathbb S)$ be 
a $p$-Weil--Petersson curve. Let $\Omega_1$ and $\Omega_2$ be the complementary domains of 
$\widehat{\mathbb C}$ divided by $\Gamma$, and $F_1:\mathbb D \to \Omega_1$, $F_2:\mathbb D^* \to \Omega_2$
the normalized Riemann mappings. In the sequence in \eqref{sequence} composing the transmission operator $\Theta_\Gamma$,
the quasisymmetric homeomorphism $h_\Gamma:\mathbb S \to \mathbb S$ is given as 
$F_1^{-1} \circ F_2|_{\mathbb S}$.
In the present circumstances, $h_\Gamma^{-1}$ is also identified with $[\mu]=\pi(\mu)$ in $T_p$.
We remark that this inverse does not cause any trouble in later arguments because $T_p$ is a topological group mentioned in Proposition \ref{group}.

We make the operators $\Theta_\Gamma$ act on the same space. Namely, ignoring the isomorphic relations in \eqref{sequence},
the dependence of $\Theta_\Gamma$ is represented by the composition operator $C_{h_\Gamma}$:
$$
{\rm HD}_p(\Omega_1) \cong {\rm HD}_p(\mathbb D) \cong B_p(\mathbb S) \overset{C_{h_\Gamma}}{\longrightarrow} 
B_p(\mathbb S) \cong {\rm HD}_p(\mathbb D^*) \cong {\rm HD}_p(\Omega_2).
$$

Concerning the operator norm of the composition operators, 
the following general claim is known (see \cite[Remark 3.3]{BS}).
We note that to apply this proposition to the transmission operator
$\Theta_\Gamma$, we do not have to assume that $\Gamma$ is a Weil--Petersson curve.
Even to the original setting where $\Gamma$ is a quasicircle,
this is applicable.

\begin{proposition}
The operator norm of $C_h:B_p(\mathbb S) \to B_p(\mathbb S)$ 
can be estimated only by the quasisymmetry constant of $h$ $($or 
the maximal dilatation of a quasiconformal extension of $h$$)$.
\end{proposition}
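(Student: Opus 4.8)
The plan is to revisit the proof of Theorem \ref{vod} and to check that every constant entering it is controlled solely by the maximal dilatation $K$ of a quasiconformal extension of $h$. First I would fix such an extension: by the Beurling--Ahlfors or Douady--Earle construction a quasisymmetric $h$ admits a quasiconformal extension $H:\mathbb D\to\mathbb D$ whose maximal dilatation $K$ is bounded in terms of the quasisymmetry constant of $h$, so the two quantities named in the statement are interchangeable and it suffices to estimate $\Vert C_h\Vert$ by $K$ alone.

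The reduction to the disk passes through the trace operator. Let $E:B^{2/p}_{p,p}(\mathbb D)\to B_p(\mathbb S)$ be the trace and fix a bounded linear right inverse $\mathcal E:B_p(\mathbb S)\to B^{2/p}_{p,p}(\mathbb D)$, for instance the Poisson extension; the norms $\Vert E\Vert$ and $\Vert\mathcal E\Vert$ are absolute, independent of $h$. Since $H$ restricts to $h$ on $\mathbb S$, the composition operator $C_H:F\mapsto F\circ H$ on the disk intertwines with $C_h$ through the trace, that is $E\circ C_H=C_h\circ E$. Applying this to $F=\mathcal E\phi$ and using $E\mathcal E=\mathrm{id}$ gives $C_h\phi=E(C_H(\mathcal E\phi))$, whence
\[
\Vert C_h\Vert\le \Vert E\Vert\,\Vert C_H\Vert_{B^{2/p}_{p,p}(\mathbb D)}\,\Vert\mathcal E\Vert .
\]
Thus the whole matter reduces to bounding the norm of $C_H$ on $B^{2/p}_{p,p}(\mathbb D)$ by a function of $K$.

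For $p>2$ I would invoke the real interpolation identity $B^{2/p}_{p,p}(\mathbb D)=(\mathrm{BMO}(\mathbb D),W^1_2(\mathbb D))_{2/p,p}$ recorded in the proof of Theorem \ref{vod}. As $(\cdot,\cdot)_{\theta,p}$ is an exact interpolation functor of exponent $\theta=2/p$, one has
\[
\Vert C_H\Vert_{B^{2/p}_{p,p}(\mathbb D)}\le \Vert C_H\Vert_{\mathrm{BMO}(\mathbb D)}^{\,1-2/p}\,\Vert C_H\Vert_{W^1_2(\mathbb D)}^{\,2/p}.
\]
The first factor depends only on $K$ by the quasiconformal invariance of $\mathrm{BMO}$ (see \cite[Theorem 12]{B2}), while the second is at most $K^{1/2}$ by the standard distortion estimate $\Vert F\circ H\Vert_{D_2}\le K^{1/2}\Vert F\Vert_{D_2}$ for the conformally natural Dirichlet integral. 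Combining this with the previous display yields a bound on $\Vert C_h\Vert$ in terms of $K$ alone. The case $p=2$ is the classical one, where $\Vert C_h\Vert\le K$ follows directly from the Douglas formula and the quasiconformal reflection; for $1<p<2$ the parameter $2/p$ exceeds $1$ and this interpolation is unavailable, so I would instead pass to the dual pairing of $B_p(\mathbb S)$ with $B_{p'}(\mathbb S)$, $1/p+1/p'=1$, under which the analysis reduces to the composition operator induced by $h^{-1}$ on $B_{p'}(\mathbb S)$ with $p'>2$; since the quasisymmetry constant of $h^{-1}$ is again controlled by $K$, the case just treated applies, exactly as in the cited works.

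The main obstacle is quantitative uniformity rather than mere boundedness. Two points require care: that the quasiconformal extension $H$ can be chosen with dilatation bounded by the quasisymmetry constant, which is supplied by Beurling--Ahlfors or Douady--Earle, and that the $\mathrm{BMO}$-composition norm $\Vert C_H\Vert_{\mathrm{BMO}(\mathbb D)}$ is genuinely a function of $K$ and of nothing else. The $W^1_2$ factor is the transparent part, being governed exactly by $K^{1/2}$, and the interpolation step contributes no further constant because the functor is exact of exponent $\theta$; hence once the $K$-dependence of the $\mathrm{BMO}$ norm is secured, the estimate closes uniformly over all $h$ sharing a given dilatation bound.
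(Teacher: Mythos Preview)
The paper does not give its own proof of this proposition; it records it as known and cites \cite[Remark 3.3]{BS}. Your plan of rerunning the argument for Theorem~\ref{vod} while tracking that every constant depends only on $K$ is the right idea, and for $p\ge 2$ your execution is sound: the trace and extension operators carry absolute constants, the real interpolation functor is exact of exponent $2/p$, the bound $\Vert C_H\Vert_{W^1_2}\le K^{1/2}$ is the standard change-of-variables estimate for the Dirichlet integral, and $\Vert C_H\Vert_{\mathrm{BMO}(\mathbb D)}$ depends only on $K$ by the quasiconformal invariance of $\mathrm{BMO}$ (this is what the paper's citation of \cite[Theorem~12]{B2} provides).

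The gap is your treatment of $1<p<2$. The duality you invoke does not exist in the form required. For the homogeneous space $B_p(\mathbb S)=\dot B^{1/p}_{p,p}(\mathbb S)$ the standard dual is $\dot B^{-1/p}_{p',p'}(\mathbb S)$, a distribution space of negative smoothness, not $B_{p'}(\mathbb S)=\dot B^{1/p'}_{p',p'}(\mathbb S)$. If instead you try the $L^2$ pairing or the Gagliardo double-integral pairing, the adjoint of $C_h$ is not $C_{h^{-1}}$: a change of variables produces the factor $|(h^{-1})'|$, and a general quasisymmetric homeomorphism need not be absolutely continuous, so this weight may fail to exist even as an $L^1$ function. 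Hence the reduction to $p'>2$ collapses as written. The range $1<p<2$ corresponds on the disk to smoothness $2/p>1$, where composition with a merely quasiconformal map is genuinely more delicate; the references \cite{BS,B2} treat this range by other devices (different interpolation endpoints and intrinsic Besov-space characterizations), not by the duality you describe, so the phrase ``exactly as in the cited works'' is not justified without actually consulting them.
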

\smallskip

Moreover, in the case $p=2$, the strong convergence $C_{h_n} \to C_{h}$ as $h_n \to h$ in $T$
is proved by \cite[Proposition 10.1]{Sh}. 

\begin{theorem}\label{strong0}
Suppose that $h_n$ converges to $h$ in $T$. 
Then, the composition operators $C_{h_n}$ acting on $B_2(\mathbb S)$
converge to $C_h$ strongly.
Namely,
$$
\Vert C_{h_n}(\phi)-C_h(\phi) \Vert_{B_2} \to 0 \quad (n \to \infty)
$$
for every $\phi \in B_2(\mathbb S)$.
\end{theorem}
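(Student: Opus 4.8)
The plan is to combine a uniform operator bound with convergence on a dense subspace, reducing first to the identity and then using that $B_2(\mathbb{S})$ is a Hilbert space, where weak convergence together with convergence of norms implies strong convergence. Since $T$ is a topological group (Proposition \ref{group}), put $g_n=h_n\ast h^{-1}$, so that $g_n\to\mathrm{id}$ in $T$ and $h_n=g_n\circ h$ on $\mathbb{S}$; then $C_{h_n}\phi=(\phi\circ g_n)\circ h=C_h(C_{g_n}\phi)$, so $C_{h_n}-C_h=C_h\circ(C_{g_n}-I)$. As $\|C_h\|<\infty$ by Theorem \ref{compo2}, it suffices to show $C_{g_n}\to I$ strongly. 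The operator-norm estimate recorded just above gives $M:=\sup_n\|C_{g_n}\|<\infty$, because $g_n\to\mathrm{id}$ in $T$ keeps the quasisymmetry constants of the $g_n$ uniformly bounded. By a standard $3\varepsilon$ argument using this bound and the density of smooth functions in $B_2(\mathbb{S})$, it is then enough to prove $\|C_{g_n}\phi-\phi\|_{B_2}\to 0$ for smooth $\phi$.

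I would establish the norm convergence $\|C_{g_n}\phi\|_{B_2}\to\|\phi\|_{B_2}$ from the Douglas formula behind Proposition \ref{Douglas}, which makes $\|\psi\|_{B_2}$ proportional to the Dirichlet norm $\|P\psi\|_{D_2}$ of the Poisson extension. Convergence $g_n\to\mathrm{id}$ in the Teichm\"uller metric provides quasiconformal extensions $G_n$ of $g_n$ to $\mathbb{D}$ with Beltrami coefficients $\mu_n$ satisfying $\|\mu_n\|_\infty\to 0$. Since $P\phi\circ G_n$ has the same boundary values as $P(\phi\circ g_n)$, the Dirichlet principle gives $\|P(\phi\circ g_n)\|_{D_2}\le\|P\phi\circ G_n\|_{D_2}$, while the change of variables $w=G_n(z)$ transforms $\|P\phi\circ G_n\|_{D_2}^2$ into $\|P\phi\|_{D_2}^2$ times a pointwise distortion factor that tends to $1$ uniformly as $\|\mu_n\|_\infty\to 0$. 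Hence $\limsup_n\|C_{g_n}\phi\|_{B_2}\le\|\phi\|_{B_2}$, and applying the same bound to $g_n^{-1}$, whose extensions also have dilatation tending to $1$, yields the reverse inequality.

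For weak convergence I would use that smooth $\phi$ is continuous, so $\phi\circ g_n\to\phi$ uniformly on $\mathbb{S}$ and hence in $L^2(\mathbb{S})$, as $g_n\to\mathrm{id}$ uniformly. Writing the $B_2$-inner product against a smooth $\psi$ as an $L^2$-pairing against a fixed function, this $L^2$-convergence gives $\langle C_{g_n}\phi,\psi\rangle\to\langle\phi,\psi\rangle$ for all smooth $\psi$, and then for all $\psi\in B_2(\mathbb{S})$ by the uniform bound $M$; thus $C_{g_n}\phi\rightharpoonup\phi$. Combining weak convergence with the norm convergence of the previous paragraph, the Hilbert space criterion yields $\|C_{g_n}\phi-\phi\|_{B_2}\to 0$ for smooth $\phi$, which through the $3\varepsilon$ argument extends to every $\phi\in B_2(\mathbb{S})$, and undoing the reduction proves the claim.

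The main obstacle will be the change-of-variables step: one must control the distortion factor produced by $G_n$ and show it tends to $1$ uniformly using only $\|\mu_n\|_\infty\to 0$, with no integrability hypothesis on $\mu_n$, so that the Dirichlet integrals converge. This is also where the restriction to $p=2$ is essential, since the passage from weak plus norm convergence to strong convergence relies on the Hilbert structure of $B_2(\mathbb{S})$ and has no direct analogue for $p\neq 2$.
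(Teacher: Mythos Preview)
The paper does not prove this theorem; it attributes the result to \cite[Proposition~10.1]{Sh}. Your argument is essentially correct and follows a standard Hilbert-space route---weak convergence plus norm convergence gives strong convergence---and the norm-convergence half is exactly in the spirit of the remark the paper makes right after Proposition~\ref{strong}. The quasi-invariance $\|\Phi\circ G\|_{D_2}\le K^{1/2}\|\Phi\|_{D_2}$ you need is precisely the estimate recorded in Section~\ref{2} for $K$-quasiconformal $G$, and with $K_n\to 1$ it yields the two-sided bound $K_n^{-1/2}\|\phi\|_{B_2}\le\|C_{g_n}\phi\|_{B_2}\le K_n^{1/2}\|\phi\|_{B_2}$ for \emph{every} $\phi\in B_2(\mathbb{S})$, not only smooth ones; so there is no genuine obstacle in the change-of-variables step you flag. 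Note incidentally that this quasi-invariance of the Dirichlet energy is itself special to $p=2$ (as the paper observes at the end of Section~\ref{3}), so both halves of your argument, not just the Hilbert-space criterion, rely on $p=2$.

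One inaccuracy worth fixing: you invoke Proposition~\ref{group} to claim that $T$ is a topological group, but that proposition concerns $T_p$. In fact the universal Teichm\"uller space $T$ is \emph{not} a topological group in the Teichm\"uller topology---left translations fail to be continuous. Your reduction only needs continuity of the right translation $g\mapsto g\circ h^{-1}$, which does hold (it is a biholomorphism in the Bers embedding, indeed an isometry of the Teichm\"uller metric), so $h_n\to h$ still gives $g_n=h_n\circ h^{-1}\to\mathrm{id}$; but you should replace the appeal to Proposition~\ref{group} with this more accurate justification.
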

\smallskip

However, in the general case $p>1$, 
we do not know whether this is valid or not.
Instead, the following claim is known under the stronger assumption.

\begin{proposition}\label{strong}
Suppose that $h_n$ converges to $h$ in $T_p$. 
Then, $C_{h_n}$ converges to $C_h$ strongly. 
\end{proposition}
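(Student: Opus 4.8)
The plan is to reduce the statement to the case $h=\mathrm{id}$ and then combine a uniform boundedness principle with a density argument. First, since $T_p$ is a topological group (Proposition \ref{group}), the hypothesis $h_n \to h$ in $T_p$ yields $k_n := h^{-1} \ast h_n \to \mathrm{id}$ in $T_p$. Because the composition operator satisfies $C_a \circ C_b = C_{b \circ a}$ and $h \circ k_n = h_n$, we have $C_{h_n} = C_{k_n} \circ C_h$, so that for every $\phi \in B_p(\mathbb S)$,
\begin{equation*}
C_{h_n}(\phi) - C_h(\phi) = (C_{k_n} - I)\big(C_h(\phi)\big).
\end{equation*}
As $C_h$ is an automorphism of $B_p(\mathbb S)$ by Theorem \ref{vod}, the element $\psi := C_h(\phi)$ ranges over all of $B_p(\mathbb S)$, so it suffices to prove $\|(C_{k_n}-I)\psi\|_{B_p} \to 0$ for every $\psi \in B_p(\mathbb S)$, where $k_n \to \mathrm{id}$ in $T_p$. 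I emphasize that this reduction is exactly where the topological group structure of $T_p$ is used, and it is precisely what is unavailable for the universal space $T$; this is the source of the difficulty in the open case of Theorem \ref{strong0} for $p \ne 2$.

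Next I would establish uniform boundedness. Since $k_n \to \mathrm{id}$, the maximal dilatations of the quasiconformal extensions of $k_n$ are uniformly bounded (indeed tend to $1$), so by the preceding proposition on operator norms we have $M := \sup_n \|C_{k_n}\|_{B_p \to B_p} < \infty$. Hence $\{C_{k_n}-I\}$ is equicontinuous, and it is enough to verify the convergence on a dense subspace: for general $\psi$ and $\psi_\varepsilon$ in the dense class with $\|\psi-\psi_\varepsilon\|_{B_p}<\varepsilon$,
\begin{equation*}
\|(C_{k_n}-I)\psi\|_{B_p} \le (M+1)\,\varepsilon + \|(C_{k_n}-I)\psi_\varepsilon\|_{B_p}.
\end{equation*}
I would take the dense class to be the smooth (or Lipschitz) functions, which are dense in $B_p(\mathbb S)=B^{1/p}_{p,p}(\mathbb S)$.

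It then remains to prove, for smooth $\psi$, that $H_n := \psi \circ k_n - \psi \to 0$ in $B_p(\mathbb S)$, and this is the hard part. Two facts are available. First, the uniform convergence $\|k_n - \mathrm{id}\|_\infty \to 0$, a consequence of $k_n \to \mathrm{id}$ in $T_p \subset T$ since normalized quasiconformal maps with vanishing dilatation tend to the identity; with the Lipschitz bound for $\psi$ this gives $\|H_n\|_{L^p(\mathbb S)} \to 0$. Second, by Mori's distortion theorem the $k_n$ obey a uniform Hölder estimate with exponent $\beta_n \to 1$, so that for a fixed $s \in (1/p, 1)$ the seminorms $\|H_n\|_{B^s_{p,p}(\mathbb S)}$ stay bounded for all large $n$. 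Interpolating $B^{1/p}_{p,p} = (L^p, B^s_{p,p})_{\theta,p}$ with $\theta = 1/(ps)$ then gives
\begin{equation*}
\|H_n\|_{B_p} \lesssim \|H_n\|_{L^p}^{1-\theta}\,\|H_n\|_{B^s_{p,p}}^{\theta} \to 0.
\end{equation*}
The main obstacle is exactly this estimate: the critical seminorm carries the singular kernel $|x-y|^{-2}$, so the near-diagonal contribution cannot be controlled by the uniform smallness of $H_n$ alone. The regularity that tames it — the near-isometric modulus of continuity of $k_n$, equivalently $K(k_n)\to 1$ — is produced only after the reduction to the identity, which is the precise mechanism by which the $T_p$-hypothesis, rather than mere $T$-convergence to a target of possibly large dilatation, enters the proof.
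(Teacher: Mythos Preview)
Your argument is correct and takes a different route from the paper's one-line proof, which simply cites the joint continuity of $(\phi,h)\mapsto C_h(\phi)$ on $B_p(\mathbb S)\times T_p$ established in \cite[Theorem~6.3]{WM-4}. You instead argue directly: reduce to $k_n\to\mathrm{id}$, invoke the uniform operator-norm bound together with density of smooth functions, and then control $H_n=\psi\circ k_n-\psi$ via the interpolation inequality $\|H_n\|_{B^{1/p}_{p,p}}\lesssim\|H_n\|_{L^p}^{1-\theta}\|H_n\|_{B^s_{p,p}}^{\theta}$ for a fixed $s\in(1/p,1)$; the first factor vanishes by uniform convergence of $k_n$ and the second stays bounded thanks to Mori's H\"older estimate with exponent $1/K(k_n)\to 1$. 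This is a clean, self-contained way to handle the singular kernel without the black box from \cite{WM-4}.

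One correction to your commentary: the reduction to the identity is \emph{not} where the $T_p$-hypothesis is essential. The Teich\-m\"ul\-ler distance on $T$ is right-invariant, so already under $h_n\to h$ in $T$ one has $l_n:=h_n\circ h^{-1}\to\mathrm{id}$ in $T$ with $K(l_n)\to 1$, and writing $C_{h_n}=C_h\circ C_{l_n}$ your entire scheme runs verbatim with $l_n$ in place of $k_n$. Taken at face value, then, your argument would settle the very statement the paper flags as open for $p\neq 2$ (strong convergence under convergence in $T$); so either there is a genuine subtlety neither of us is seeing, or your diagnosis of the role of the $T_p$-structure is misplaced.
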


\begin{proof}
This follows from the fact that
the map $B_p(\mathbb S) \times T_p \to B_p(\mathbb S)$ defined by
$(\phi,h) \mapsto C_h(\phi)$ is continuous. See \cite[Theorem 6.3]{WM-4}.
\end{proof}

If $C_{h_n}$ converges to $C_h$ strongly in general, then 
$\liminf_{n \to \infty}\Vert C_{h_n} \Vert \geq \Vert C_h \Vert$ is satisfied.
When $p=2$, if $h_n \to h$ in $T$, then $\lim_{n \to \infty}\Vert C_{h_n} \Vert = \Vert C_h \Vert$.
This is because for $\Phi \in {\rm HD}_2(\mathbb D) \cong B_2(\mathbb S)$ and a $K$-quasiconformal self-homeomorphism $H$ of $\mathbb D$,
we have $\Vert \Phi \circ H \Vert_{D_2} \leq K^{1/2} \Vert \Phi \Vert_{D_2}$.

\section{Generalization to $p$-Dirichlet harmonic functions on Riemann surfaces}\label{5}

We have dealt with the correspondence between the spaces of 
Dirichlet finite harmonic functions on planar domains separated by a quasicircle. 
In this section, we consider this problem for 
a quasicircle on a Riemann surface. We define the transmission operator also
in this setting and represent it using the operators we have defined in the case of planar domains.

\subsection{$p$-Dirichlet finite harmonic functions on a Riemann surface}
We can generalize the definition of Dirichlet finite harmonic function on a planar domain to a Riemann surface.

\begin{definition}
A harmonic function $\Phi$ on a Riemann surface $R$ with hyperbolic metric $\rho_R(z)|dz|$ is
{\em $p$-Dirichlet finite} for $p>1$ if
$$
\Vert \Phi \Vert_{D_p}
=\left(\int_{R}(\vert \nabla \Phi(z)\vert \rho_R^{-1}(z))^p\rho_R^2(z)dxdy\right)^{1/p}<\infty.
$$
\end{definition}

We note that $\vert \nabla \Phi(z)\vert \rho_R^{-1}(z)$ is a function on $R$ defined independently on the local coordinates $z$.
The set of all $p$-Dirichlet finite harmonic functions on $R$ is denoted by {${\rm HD}_p(R)$}.
This is a complex Banach space 
by ignoring the difference in constant functions.
If $p<p'$, then the inclusion ${\rm HD}_p(R) \hookrightarrow {\rm HD}_{p'}(R)$ is continuous.

We consider the Dirichlet principle on a Riemann surface $R$.
In particular, we apply this to a bordered Riemann surface.
For $p>1$, let $D_p(R)$ be the Banach space of weakly differentiable functions 
$\Phi$ on $R$
with $\Vert \Phi \Vert_{D_p}<\infty$.
Then, the $p$-Dirichlet principle can be formulated as
the topological direct sum decomposition
\begin{equation}\label{r-dirichlet}
D_p(R)={\rm HD}_p(R) \oplus \overline{C_0^\infty(R)}.
\end{equation}

In the case $p=2$, the classical method of orthogonal projection for 
Tonelli functions (see e.g. \cite[p.163, Corollary 2]{SN}) also work in this setting of distribution. Thus, \eqref{r-dirichlet}
is valid for $p=2$.
In particular, when $R_0$ is a compact bordered Riemann surface, we can extract
the harmonic part from a function in $D_2(R_0)$ having the same boundary extension to $\partial R_0$.

\begin{proposition}\label{theDirichlet}
Among all functions in $D_2(R_0)$ having the same boundary extension to $\partial R_0$,
there exists a unique function minimizing the Dirichlet norm, which is harmonic.
\end{proposition}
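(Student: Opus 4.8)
The plan is to derive the statement from the orthogonal direct sum decomposition \eqref{r-dirichlet} specialized to $p=2$. For $p=2$ the Dirichlet norm polarizes to an inner product $\langle \cdot, \cdot\rangle_{D_2}$, under which $D_2(R_0)$ (modulo constants) is a Hilbert space and the decomposition $D_2(R_0)={\rm HD}_2(R_0)\oplus\overline{C_0^\infty(R_0)}$ is orthogonal. The starting observation is that the competitor class --- all functions in $D_2(R_0)$ sharing a prescribed boundary extension $\phi$ on $\partial R_0$ --- is an affine subspace $\Phi+\overline{C_0^\infty(R_0)}$ for any fixed representative $\Phi$ with that boundary extension. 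Indeed, two functions in $D_2(R_0)$ have the same boundary extension exactly when their difference lies in the kernel of the boundary trace operator, and in the $p=2$ setting this kernel coincides with the closed subspace $\overline{C_0^\infty(R_0)}$ appearing in \eqref{r-dirichlet}.

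Granting this, I would decompose a representative as $\Phi=u+v$ with $u\in{\rm HD}_2(R_0)$ and $v\in\overline{C_0^\infty(R_0)}$. Since $v$ already lies in $\overline{C_0^\infty(R_0)}$, the competitor class $\Phi+\overline{C_0^\infty(R_0)}$ equals $u+\overline{C_0^\infty(R_0)}$, so every competitor is of the form $u+g$ with $g\in\overline{C_0^\infty(R_0)}$. Orthogonality then yields the Pythagorean identity
\[
\Vert u+g\Vert_{D_2}^2=\Vert u\Vert_{D_2}^2+\Vert g\Vert_{D_2}^2\ge \Vert u\Vert_{D_2}^2,
\]
with equality precisely when $\Vert g\Vert_{D_2}=0$. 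As $R_0$ is connected, such a $g$ is constant, and its vanishing boundary trace forces $g\equiv 0$. Hence $u$ is the unique minimizer, and it is harmonic by virtue of belonging to ${\rm HD}_2(R_0)$.

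The main obstacle is the identification used in the first step: that the kernel of the boundary extension operator on $D_2(R_0)$ is exactly $\overline{C_0^\infty(R_0)}$, i.e.\ that every $D_2$-function with zero boundary trace is approximable in the Dirichlet norm by functions compactly supported in the interior of $R_0$. For $p=2$ this is precisely the content underlying the classical orthogonal-projection construction for Tonelli functions on a Riemann surface, so rather than reproving it I would invoke that construction directly (cf.\ \cite[p.163, Corollary 2]{SN}), noting that on a compact bordered surface the decomposition \eqref{r-dirichlet} already encodes this kernel description. Once this is in place, the remaining argument is the elementary Hilbert-space minimization above, with harmonicity of the minimizer automatic.
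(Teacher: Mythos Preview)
Your proposal is correct and follows essentially the same approach as the paper: the paper does not give a separate proof of this proposition but simply notes that for $p=2$ the classical orthogonal projection method (citing \cite[p.163, Corollary 2]{SN}) yields the decomposition \eqref{r-dirichlet}, from which the proposition follows immediately. You have spelled out the Pythagorean minimization argument that the paper leaves implicit, and you correctly identify and handle the one nontrivial input (that the kernel of the trace coincides with $\overline{C_0^\infty(R_0)}$) by invoking the same reference.
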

\smallskip

Later in this section, we see that formulation \eqref{r-dirichlet} is valid
for a compact bordered Riemann surface $R_0$ also in the case $p>1$ though the harmonic function
is not necessarily the minimizer of the $p$-Dirichlet norm as stated in Proposition \ref{theDirichlet}.

\subsection{The transmission on a Riemann surface}
Let $\gamma$ be a quasicircle in a Riemann surface $R$ that divides $R$ into
subsurfaces $R_1$ and $R_2$. Here,
a closed Jordan curve $\gamma$ on a Riemann surface $R$ is a quasicircle by definition if
there exists an annular neighborhood $U$ of $\gamma$ and a conformal embedding $\sigma:U \to \mathbb C$
such that $\sigma(\gamma)$ is a quasicircle on $\mathbb C$.
This is well defined independently of the choice of $U$ and $\sigma$ (see \cite[Definition 2.1]{Shiga}).
We regard each $R_i$ $(i=1,2)$ as a bordered Riemann surface and also as
a complete hyperbolic surface having 
$\gamma$ as the boundary at infinity. The hyperbolic metric $\rho_{R_i}(z)|dz|$ is defined in this sense.
For simplicity, we assume that the bordered Riemann surface $R_i$ is compact.
The cases of non-compact surfaces with plural boundary components are also treated in \cite{SS21} and \cite{Shiga}, 
but we do not give such generalization in this paper.
In this situation, we consider the correspondence between
${\rm HD}_p(R_1)$ and ${\rm HD}_p(R_2)$.

The transmission operator $\Theta_\gamma:{\rm HD}_p(R_1) \to {\rm HD}_p(R_2)$ is defined as follows.
This technique of sending $\gamma$ to a plane curve $\Gamma \subset \mathbb C$ and applying the results for $\Gamma$
originates in Schippers and Staubach \cite{SS,SS20,SS21}.
Let $\Phi_1 \in {\rm HD}_p(R_1)$.
We choose an annular neighborhood $U$ of $\gamma$ and 
a conformal embedding $\sigma:U \to \mathbb C$ with $\sigma(\gamma)=\Gamma$.
Let $U_i=U \cap R_i$ and $V_i=\sigma(U_i) \subset \Omega_i$ $(i=1,2)$.
Let $\Psi_1=\sigma_*(\Phi_1|_{U_1})$, which is a harmonic function on $V_1$
whose $p$-Dirichlet norm defined as a function on $\Omega_1$ is finite by the comparison of
hyperbolic densities between $R_1$ and $\Omega_1$.
Then, we extend $\Psi_1$ to $\Omega_1$ as a function in $D_p(\Omega_1)$.
By Theorem \ref{dirichletp}, we obtain $\widetilde \Psi_1 \in {\rm HD}_p(\Omega_1)$
that has ``the same boundary values on $\Gamma$'' as $\Psi_1$.

We apply the transmission operator $\Theta_\Gamma$ to $\widetilde \Psi_1 \in {\rm HD}_p(\Omega_1)$.
By Corollary \ref{main1}, we obtain $\widetilde \Psi_2=\Theta_\Gamma(\widetilde \Psi_1) \in {\rm HD}_p(\Omega_2)$.
Let $\Psi_2$ be the restriction of $\widetilde \Psi_2$ to $V_2$.
We want to choose $\Phi_2 \in {\rm HD}_p(R_2)$ out of $\sigma^*\Psi_2$ so that their 
boundary extensions to $\gamma$ are the same.
We explain how this is possible in the next subsection.
Then, 
the correspondence $\Phi_1 \mapsto \Phi_2$ defines 
the transmission operator $\Theta_\gamma:{\rm HD}_p(R_1) \to {\rm HD}_p(R_2)$.

We note that $\Theta_\gamma$ is constructed from $\Theta_\Gamma$ with the aid of the conformal embedding $\sigma$ of
the neighborhood $U$ of $\gamma$ such that $\Gamma=\sigma(\gamma)$, but this is
defined independently of the choice of $U$ or $\sigma$. 
Moreover, this is defined even for a closed Jordan curve $\gamma$ not necessarily a quasicircle.
This is because on a dense subset of ${\rm HD}_p(R_1)$ consisting of those continuously extendable to $\gamma$,
$\Theta_\gamma$ is determined uniquely.

\subsection{Description of the transmission operator}
We extract the essential step of the arguments for constructing the transmission operator as the following theorem.
For $p=2$, this is given in \cite[Theorem 3.24]{SS21} in a more general case where the boundary consists of multiple curves.
A similar claim for a more general Riemann surface in the case $p=2$
is given in \cite{Shiga} by traditional arguments on open Riemann surfaces.

\begin{theorem}\label{extension}
Let $R_0$ be a compact bordered Riemann surface whose boundary $\partial R_0$ is a simple closed curve $\gamma$.
Let $\sigma$ be a conformal homeomorphism of a neighborhood $U_0$ of $\gamma$ in $R_0$ onto a neighborhood 
$V_0$ of $\mathbb S$ in $\mathbb D$.
Then, the boundary extension map 
$$
E_{\partial R_0}:{\rm HD}_p(R_0) \to B_p(\mathbb S) \cong {\rm HD}_p(\mathbb D)
$$ 
taking the boundary values on $\gamma$ and sending them to $\mathbb S$ by $\sigma$ is bijective. Moreover,
the operator $E_{\partial R_0}$ is bounded, and hence
this is a Banach isomorphism.
\end{theorem}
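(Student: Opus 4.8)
The plan is to transfer the analysis near $\gamma$ to the unit disk through the collar $U_0\cong V_0$ via $\sigma$, invoking the planar isomorphism of Theorem \ref{PE} and the planar trace theory of Theorem \ref{dirichletp}, while treating the compact core of $R_0$ by interior elliptic regularity. Since ${\rm HD}_p(R_0)$ and $B_p(\mathbb S)\cong{\rm HD}_p(\mathbb D)$ are Banach spaces, it suffices to show that $E_{\partial R_0}$ is a bounded linear bijection; the open mapping theorem then yields that it is a Banach isomorphism, which also gives the final assertion. Fix a thinner collar $U_0'$ compactly contained in $U_0$, with smooth interior boundary curve $\gamma'$, and let $K$ denote the compact core bounded by $\gamma'$, so that $R_0=U_0'\cup K$. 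On the collar the surface density $\rho_{R_0}$ and the pulled-back disk density $\rho_{\mathbb D}\circ\sigma$ are comparable near $\gamma$; hence $\Psi:=\sigma_*(\Phi|_{U_0})$ is $p$-Dirichlet finite on $V_0$ with $\Vert\Psi\Vert_{D_p(V_0)}\le C\Vert\Phi\Vert_{D_p(U_0)}$, and its non-tangential limits on $\mathbb S$ define $E_{\partial R_0}(\Phi)\in B_p(\mathbb S)$. For boundedness I would fill $\Psi$ into the inner disk by a cut-off to obtain a function in $D_p(\mathbb D)$ and apply the bounded trace operator $E\colon D_p(\mathbb D)\to B_p(\mathbb S)$ underlying Theorem \ref{dirichletp} (the Uspenski\u{\i} theorem), giving $\Vert E_{\partial R_0}(\Phi)\Vert_{B_p}\le C\Vert\Psi\Vert_{D_p(V_0)}\le C\Vert\Phi\Vert_{D_p(R_0)}$.

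For injectivity, suppose $E_{\partial R_0}(\Phi)$ is the zero class; after subtracting a constant the non-tangential limits of $\Psi$ on $\mathbb S$ vanish. Expanding $\Psi$ on the annulus $V_0$ in its Fourier--Laurent series shows that the vanishing trace forces the odd symmetry $\Psi(z)=-\Psi(1/\bar z)$, so $\Psi$ extends harmonically across $\mathbb S$, in particular continuously with value $0$; transporting back, $\Phi$ extends continuously to $\gamma$ with $\Phi|_\gamma=0$ and reflects across $\gamma$. Globally this says that the odd reflection $\widetilde\Phi$ of $\Phi$ across $\gamma$ in the Schottky double $\widehat R$ of $R_0$ is harmonic on each side, continuous, and vanishes on the real-analytic curve $\gamma$; by the Schwarz reflection principle $\widetilde\Phi$ is harmonic on the closed surface $\widehat R$, hence constant, and $\widetilde\Phi|_\gamma=0$ forces $\Phi\equiv 0$.

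For surjectivity, given $\phi\in B_p(\mathbb S)$, the harmonic function $\Phi$ on $R_0$ with boundary values $\phi$ on $\gamma$ exists by classical potential theory (its uniqueness is the injectivity just proved), so the only issue is the a priori bound
$$
\Vert\Phi\Vert_{D_p(R_0)}\le C\,\Vert\phi\Vert_{B_p},
$$
which simultaneously shows $\Phi\in{\rm HD}_p(R_0)$ and that $E_{\partial R_0}$ is bounded below. I would estimate the collar and the core separately. On $U_0'$, writing $\Psi=P(\phi)+(\Psi-P(\phi))$, the first term obeys $\Vert P(\phi)\Vert_{D_p(V_0')}\le\Vert P(\phi)\Vert_{D_p(\mathbb D)}\approx\Vert\phi\Vert_{B_p}$ by Theorem \ref{PE}, while the remainder is harmonic on $V_0'$ with vanishing trace on $\mathbb S$ and is therefore controlled near $\mathbb S$ by its values on $\gamma'$. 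On $K$, where $\Phi$ is harmonic on a neighborhood of a fixed compact bordered surface with smooth boundary, interior elliptic estimates bound $\Vert\Phi\Vert_{D_p(K)}$ by the oscillation of $\Phi$ on $\gamma'$. The two estimates are coupled through this interior oscillation, and to close the argument I would use the continuous inclusion $B_p(\mathbb S)\subset{\rm VMO}(\mathbb S)\subset{\rm BMO}(\mathbb S)$ together with the boundedness of the Poisson kernel on the fixed interior curve to obtain $\mathrm{osc}_{\gamma'}\Phi\le C\Vert\phi\Vert_{\mathrm{BMO}}\le C\Vert\phi\Vert_{B_p}$. Granting the displayed bound on a dense subspace of smooth boundary data, the range of $E_{\partial R_0}$ is closed and dense, hence all of $B_p(\mathbb S)$, and combined with the boundedness and injectivity above, $E_{\partial R_0}$ is the desired Banach isomorphism.

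I expect this coercivity estimate to be the main obstacle. Unlike the case $p=2$, where Proposition \ref{theDirichlet} furnishes the harmonic part as an energy minimizer, for $p\neq 2$ there is no variational shortcut, and one must control the interior $p$-Dirichlet energy of the global harmonic function purely in terms of its boundary Besov norm; the delicate point is decoupling the compact core from the collar and transferring the boundary oscillation inward with a loss controlled only by $\Vert\phi\Vert_{B_p}$.
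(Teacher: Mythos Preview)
Your outline is essentially correct. The boundedness of $E_{\partial R_0}$ via a cut-off extension and the Uspenski\u{\i} trace operator, and injectivity via odd reflection into the Schottky double, are both valid; the paper handles injectivity in a single clause and obtains the boundedness of $E_{\partial R_0}$ only indirectly, from the open mapping theorem, after first bounding the inverse $P_{\partial R_0}$. The real divergence is in the coercivity estimate $\Vert\Phi\Vert_{D_p}\lesssim\Vert\phi\Vert_{B_p}$, which you rightly flag as the obstacle.

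The paper (Lemma~\ref{new}) resolves this not by a direct estimate but by contradiction and normal families. Assuming $P_{\partial R_0}$ is unbounded, one takes $\psi_n$ with $\Vert\psi_n\Vert_{B_p}=1$ and $\Vert\Phi_n\Vert_{D_p}\to\infty$. The inclusion $B_p\subset{\rm BMO}\subset L_1/\mathbb C$---the same one you invoke---is used only to normalize $\Vert\psi_n\Vert_{L_1}$, which makes $\{\Phi_n\}$ a normal family on $R_0$ and, crucially, makes the reflected differences $\widehat\Phi_{0n}$ (your remainder $\Phi-\Psi$, extended by odd reflection across $\gamma$) a normal family on the doubled collar $\widehat U_0$. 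After passing to a subsequence with locally uniform limits, $\Vert\Phi_n\Vert_{D_p}$ is split into a compact-core piece, a doubled-collar piece, and the controlled term $\Vert\Psi_n|_{U_0}\Vert_{D_p}\lesssim\Vert\psi_n\Vert_{B_p}$; all three remain bounded (the collar piece because $\nabla\widehat\Phi_{0n}$ is uniformly bounded on the compact $\overline{U_0}\subset\widehat U_0$ and $\int_{U_0}\rho_{R_0}^{2-p}<\infty$), contradicting the assumption. The reflection device is exactly yours, but used qualitatively: one needs only that the reflected function is harmonic on $\widehat U_0$, not an explicit gradient bound in terms of $\Vert\phi\Vert_{B_p}$.

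Your direct route can in fact be closed along the lines you sketch: both $\Phi$ and $\Psi=\sigma^*P(\phi)$ are integrals of $\phi$ against probability kernels that are smooth and bounded on the interior curve $\gamma'$, so $\sup_{\gamma'}|\Phi-\Psi|\lesssim\Vert\phi-\overline\phi\Vert_{L_1}\lesssim\Vert\phi\Vert_{B_p}$; then the maximum principle on the collar and interior gradient estimates on $\widehat U_0$ give the missing bound. The normal-family argument simply avoids tracking these constants and is the cleaner way to dispose of the obstacle you identify.
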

\smallskip

The proof is owed to the following lemma. In the case $p=2$, this is obtained in \cite[Theorem 3.22]{SS20}.
The harmonic measure on $R_0$ is given as 
the line integral by $-\ast dg(\cdot,z)$ over $\partial R_0$,
where $g(\cdot,z)$ is the Green function on $R_0$ with pole $z$. In the proof of this lemma,
we take the double of $R_0$ with respect to $\partial R_0$ and apply the normal family argument on it.
This idea stems from \cite[Proposition 3.3]{Shiga},
where a similar claim is proved for a general bordered Riemann surface in the case $p=2$.

\begin{lemma}\label{new}
Let $U_0$ be a neighborhood of the boundary $\gamma_0=\partial R_0$ of a compact bordered Riemann surface $R_0$.
Let $\Psi$ be a harmonic function on $U_0$ whose $p$-Dirichlet norm with respect to $\rho_{R_0}$ is finite.
Then, the integral $\Phi=P_{\partial R_0} \circ E_{\partial R_0}(\Psi)$ of the boundary values of $\Psi$ on $\gamma_0$ by the harmonic measure on $R_0$
belongs to ${\rm HD}_p(R_0)$. Moreover, the operator $P_{\partial R_0}:B_p(\mathbb S) \to {\rm HD}_p(R_0)$
induced by this correspondence $\Psi \mapsto \Phi$ is bounded.
\end{lemma}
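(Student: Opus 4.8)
The plan is to reduce everything to a single quantitative estimate, namely $\Vert \Phi \Vert_{D_p(R_0)} \le C\,\Vert \phi \Vert_{B_p}$ with $C$ independent of the data, where $\phi = E_{\partial R_0}(\Psi) \in B_p(\mathbb S)$. This estimate simultaneously gives $\Phi \in {\rm HD}_p(R_0)$ and the boundedness of $P_{\partial R_0}$, while the harmonicity of $\Phi$ requires no argument: it is defined as the line integral of $\phi$ against the harmonic measure $-\ast dg(\cdot,z)$, hence is harmonic on $R_0$ by construction. As a preliminary reduction, I would first check that the trace $E_{\partial R_0}(\Psi)$ really lands in $B_p(\mathbb S)$: since $\Psi$ is harmonic on the collar with finite $\rho_{R_0}$-weighted $p$-Dirichlet norm, carrying it to a collar of $\mathbb S$ in $\mathbb D$ by $\sigma$, extending by a cutoff to a function in $D_p(\mathbb D)$, and extracting its harmonic part via Theorem~\ref{dirichletp} produces an element of ${\rm HD}_p(\mathbb D)$ with the same boundary values, so $\phi \in B_p(\mathbb S)$ by Theorem~\ref{PE}.

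I would then split $R_0 = A \cup K$, where $A = \sigma^{-1}(\{r<|z|<1\})$ is a boundary collar carried by $\sigma$ to an annular neighborhood of $\mathbb S$ in $\mathbb D$, and $K = R_0 \setminus A$ is a compact core. On $A$ I would compare with the disk model: because $\gamma$ is the boundary at infinity for the complete metric $\rho_{R_0}$ and $\sigma$ is conformal, $\rho_{R_0} \asymp \rho_{\mathbb D}\circ\sigma$ on $A$, so the weighted $p$-Dirichlet integral of $\Phi$ over $A$ is comparable to that of $\sigma_\ast(\Phi|_A)$ over $\sigma(A)$ computed with $\rho_{\mathbb D}$. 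Writing $u = P(\phi) \in {\rm HD}_p(\mathbb D)$, Theorem~\ref{PE} bounds $\Vert u \Vert_{D_p(\mathbb D)} \lesssim \Vert \phi \Vert_{B_p}$, so $u$ contributes an admissible term, and it remains to control the harmonic difference $w = \sigma_\ast(\Phi|_A) - u$, which has vanishing nontangential boundary values on $\mathbb S$.

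The key idea is to pass to the double. Because $w$ vanishes on $\gamma$, the Schwarz reflection $z \mapsto 1/\bar z$ (in the $\sigma$-chart, equivalently the anticonformal involution of the double $\widehat{R}_0$ of $R_0$ across $\partial R_0$) extends $w$ to a harmonic function $\widehat w$ on a genuine \emph{two-sided} neighborhood of the now-interior curve $\gamma$ in $\widehat{R}_0$. Interior elliptic estimates therefore apply \emph{up to} $\gamma$: $\nabla w$ is bounded near $\gamma$, and since $\rho_{R_0}(z) \asymp 1/d(z)$ there, the weighted integral $\int_A |\nabla w|^p \rho_{R_0}^{2-p}\,dxdy$ behaves like $\int_0^\varepsilon t^{p-2}\,dt$, which converges precisely because $p>1$. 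A normal-family compactness argument on $\widehat{R}_0$ then makes the constant uniform: one bounds $\Vert \nabla \widehat w \Vert$ over a neighborhood of $\gamma$ by $\sup|\widehat w|$ on a slightly larger neighborhood, and by the maximum principle and the boundedness of the harmonic-measure kernel on compact subsets the latter is controlled (modulo constants) by $\Vert \phi \Vert_{B_p}$. The same compactness argument applied to the core gives $\Vert \Phi \Vert_{D_p(K)} \le C\,\Vert \Phi \Vert_{L_p(A')}$ for an interior annulus $A' \subset A$, again bounded by $\Vert \phi \Vert_{B_p}$; combining the collar and core contributions yields the desired inequality, and feeding it into the bijectivity statement completes the route to Theorem~\ref{extension}.

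The main obstacle will be the estimate up to $\gamma$ itself. Ordinary interior estimates degenerate at $\partial R_0$ and the weight $\rho_{R_0}$ blows up there, so neither the global harmonic measure $-\ast dg(\cdot,z)$ nor the local Poisson model can be controlled naively in the collar. The reflection across $\gamma$ — available \emph{precisely} because subtracting the disk model $u$ forces the boundary values of $w$ to vanish — is what converts this boundary estimate into an interior one on $\widehat{R}_0$; the delicate point is to run the normal-family limit argument genuinely uniformly, with a constant depending only on $R_0$, $\sigma$, and the chosen collar, and here unique continuation for harmonic functions is what rules out a nonzero weak limit and produces the contradiction underlying the compactness bound.
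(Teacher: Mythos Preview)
Your proposal is correct and follows essentially the same route as the paper: subtract a comparison harmonic function with the same boundary values so that the difference vanishes on $\gamma_0$, reflect across $\gamma_0$ to the double to turn the boundary estimate into an interior one, use the resulting boundedness of the gradient together with $\int \rho_{R_0}^{2-p}\,dxdy \asymp \int (1-|z|)^{p-2}\,dxdy<\infty$ for $p>1$, and then run a normal-family contradiction for uniformity of the constant. The only cosmetic difference is that the paper subtracts the given $\Psi$ itself for the first claim (so $\Phi_0=\Phi-\Psi$ on $U_0$) rather than the disk Poisson integral $u=P(\phi)$ you use throughout, and its normal-family argument reaches the contradiction directly from the finiteness of the limit's $D_p$-norm rather than via unique continuation; neither change affects the substance.
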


\begin{proof}
In order to prove that $\Phi \in {\rm HD}_p(R_0)$,
it suffices to show that $\Phi|_{U_0}$ has finite $p$-Dirichlet integral.
To this end, we consider $\Phi_0=\Phi-\Psi$ on $U_0$. Since this vanishes on $\gamma_0$, we can extend $\Phi_0$
to the double $\widehat U_0$ of $U_0$ by the reflection with respect to $\gamma_0$.
The resulting function $\widehat \Phi_0$ is harmonic on $\widehat U_0$. In particular, $|\nabla \widehat \Phi_0|$ is continuous 
on $\widehat U_0$ and we may assume that this is bounded by $M$.
Then, the $p$-Dirichlet integral of $\Phi_0$ on $U_0$ is bounded by
$$
\int_{U_0}(\vert \nabla \Phi_0(z) \vert \rho_{R_0}^{-1}(z))^p\rho_{R_0}^2(z)dxdy 
\leq M^p \int_{U_0}\rho_{R_0}^{2-p}(z)dxdy \asymp \int_{r<|z|<1}\frac{dxdy}{(1-|z|^2)^{2-p}}<\infty.
$$
Since $\Psi$ has finite $p$-Dirichlet integral on $U_0$, so does $\Phi$.

Next, we prove that $P_{\partial R_0}$ is a bounded operator. Suppose to the contrary that $P_{\partial R_0}$ is not bounded.
Then, there exists a sequence $\psi_n \in B_p(\mathbb S)$ with $\Vert \psi_n \Vert_{B_p}=1$ such that
$\Vert \Phi_n \Vert_{D_p} \to \infty$ $(n \to \infty)$ for $\Phi_n=P_{\partial R_0}(\psi_n)$.
For $\Psi_n=\sigma^*(P_{\partial R_0}(\psi_n)|_{V_0})$, we also have
$$
\Vert \Psi_n|_{U_0} \Vert_{D_p} \leq \Vert P_{\partial R_0}(\psi_n) \Vert_{D_p} \asymp \Vert \psi_n \Vert_{B_p}=1.
$$
Since $B_p(\mathbb S) \subset {\rm BMO}(\mathbb S) \subset L_1(\mathbb S)/\mathbb C$ and the inclusion is continuous,
by adding constants to $\psi_n$, we may assume that $\Vert \psi_n \Vert_{L_1}$ are all bounded by some positive constant.
Then, $\{\Phi_n\}$ constitutes a normal family. Moreover, $\{\widehat \Phi_{0n}\}$ constructed
from $\Phi_n$ and $\Psi_n$ in the above argument also constitutes a normal family in $\widehat U_0$. 
By passing to a subsequence, we may assume that $\Phi_n$ converge uniformly to $\Phi_\infty$ in $R_0 \setminus U_0$,
and $\widehat \Phi_{0n}$ converge uniformly to $\widehat \Phi_{0\infty}$ in $\widehat U_0$. It follows that
$$
\limsup_{n \to \infty} \Vert \Phi_n \Vert_{D_p} \leq \Vert \Phi_\infty|_{R_0 \setminus U_0} \Vert_{D_p}+\Vert \Phi_{0\infty}|_{U_0} \Vert_{D_p}+
\limsup_{n \to \infty} \Vert \Psi_n|_{U_0} \Vert_{D_p}.
$$
However, the right side of the above inequality is bounded, which contradicts the assumption.
Thus, the proof of the boundedness is completed.
\end{proof}

\begin{proof}[Proof of Theorem \ref{extension}]
For any $\psi \in B_p(\mathbb S)$, let $\widetilde \Psi=P(\psi) \in {\rm HD}_p(\mathbb D)$ and
$\Psi=\sigma^*(\widetilde \Psi|_{V_0})$. 
We also set $\Phi=P_{\partial R_0}(\sigma^* \psi)=P_{\partial R_0} \circ E_{\partial R_0}(\Psi)$.
Since $\Phi$ has finite $p$-Dirichlet integral on $U_0$ and $\Phi$ and $\Psi$ have ``the same boundary values on $\gamma_0$'',
Lemma \ref{new} implies that $\Phi \in {\rm HD}_p(R_0)$. This shows that $E_{\partial R_0}$ is surjective and it is also injective.

The boundedness of $E_{\partial R_0}:\Phi \mapsto \psi$ can be verified by considering its inverse operator $P_{\partial R_0}$.
By Lemma \ref{new}, $P_{\partial R_0}$ is a bounded linear operator.
Then, the open mapping theorem implies that both are Banach isomorphisms.
\end{proof}

By the fact that $P_{\partial R_0}:B_p(\mathbb S) \to {\rm HD}_p(R_0)$ is a Banach isomorphism,
we see that the $p$-Dirichlet principle \eqref{r-dirichlet} is valid for a compact bordered Riemann surface.
We state it as a by-product of our arguments due to its own interest. It is also 
expected that this should be true for any Riemann surface $R$.

\begin{theorem}
For a compact bordered Riemann surface $R_0$, $D_p(R_0)$ admits the topological direct sum decomposition
\begin{equation}
D_p(R_0)={\rm HD}_p(R_0) \oplus \overline{C_0^\infty(R_0)}
\end{equation}
for any $p>1$.
\end{theorem}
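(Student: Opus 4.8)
The plan is to reproduce on $R_0$ the projection argument that yields the planar $p$-Dirichlet principle (Theorem \ref{dirichletp}), using the boundary operators $E_{\partial R_0}$ and $P_{\partial R_0}$ in place of the planar trace $E$ and the Poisson operator $P$. First I extend the boundary extension operator from ${\rm HD}_p(R_0)$ to the full Sobolev space, defining $E_{\partial R_0}:D_p(R_0)\to B_p(\mathbb S)$. For $\Phi\in D_p(R_0)$ I restrict $\Phi$ to the collar $U_0$, push it forward by $\sigma$ to a function in $D_p(V_0)$ on the neighborhood $V_0$ of $\mathbb S$ in $\mathbb D$, multiply by a cutoff equal to $1$ near $\mathbb S$ and supported in $V_0$, and extend by zero to obtain a function in $D_p(\mathbb D)$ with the same trace on $\mathbb S$; applying the planar Uspenski\u{\i} trace operator $E:D_p(\mathbb D)\to B_p(\mathbb S)$ produces $E_{\partial R_0}(\Phi)$. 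Boundedness follows from the comparison of hyperbolic densities on the compact collar $U_0$, exactly as in the construction of $\Theta_\gamma$, together with the boundedness of the planar trace; the result is independent of the cutoff since two choices differ by a function supported away from $\mathbb S$, whose trace vanishes. On ${\rm HD}_p(R_0)$ this operator coincides with the boundary extension of Theorem \ref{extension}.

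Next I record that $P_{\partial R_0}:B_p(\mathbb S)\to {\rm HD}_p(R_0)\subset D_p(R_0)$ is a bounded right inverse of $E_{\partial R_0}$: since $P_{\partial R_0}(\psi)$ is harmonic with boundary values $\psi$ transported to $\mathbb S$, one has $E_{\partial R_0}\circ P_{\partial R_0}={\rm id}_{B_p(\mathbb S)}$, which is precisely the isomorphism statement of Theorem \ref{extension}. Hence $Q:=P_{\partial R_0}\circ E_{\partial R_0}$ is a bounded idempotent on $D_p(R_0)$, because $Q^2=P_{\partial R_0}(E_{\partial R_0}P_{\partial R_0})E_{\partial R_0}=Q$. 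Its image is ${\rm Im}\,P_{\partial R_0}={\rm HD}_p(R_0)$, and since $P_{\partial R_0}$ is injective its kernel is ${\rm Ker}\,E_{\partial R_0}$. The bounded projection $Q$ therefore yields the topological direct sum $D_p(R_0)={\rm HD}_p(R_0)\oplus {\rm Ker}\,E_{\partial R_0}$.

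It remains to identify ${\rm Ker}\,E_{\partial R_0}$ with $\overline{C_0^\infty(R_0)}$. The inclusion $\overline{C_0^\infty(R_0)}\subseteq {\rm Ker}\,E_{\partial R_0}$ is immediate, as functions in $C_0^\infty(R_0)$ vanish near $\gamma$ and $E_{\partial R_0}$ is continuous. For the reverse, given $\Phi\in {\rm Ker}\,E_{\partial R_0}$, I localize: on the collar the pushed-forward function has zero trace on $\mathbb S$, so by the planar identification ${\rm Ker}\,E=\overline{C_0^\infty(\mathbb D)}$ from Theorem \ref{dirichletp} it is approximated in $D_p$ by functions vanishing in a neighborhood of $\mathbb S$, while on the compact part $R_0\setminus U_0$ the function $\Phi$ is approximated by mollification. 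Patching the two approximations with a partition of unity subordinate to the collar and the interior produces functions in $C_0^\infty(R_0)$ converging to $\Phi$ in the $D_p$ norm.

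I expect the main obstacle to lie in this last gluing step: one must arrange the partition-of-unity patching so that it introduces no uncontrolled Dirichlet energy on the overlap region, which requires the two approximants to agree closely with $\Phi$ there in the $D_p$ sense before multiplying by the cutoffs. The boundedness of the \emph{extended} trace $E_{\partial R_0}$ on all of $D_p(R_0)$, rather than merely on harmonic functions, is the other point needing care; it is handled by the collar extension together with the planar Uspenski\u{\i} trace theorem, and once both points are settled the decomposition follows formally from the projection $Q$.
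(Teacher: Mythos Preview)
Your proposal is correct and follows essentially the same approach as the paper: extend $E_{\partial R_0}$ to all of $D_p(R_0)$ via the collar push-forward and the planar trace, then use $P_{\partial R_0}$ as a bounded right inverse to obtain the projection $Q=P_{\partial R_0}\circ E_{\partial R_0}$ and the splitting $D_p(R_0)={\rm Im}\,P_{\partial R_0}\oplus{\rm Ker}\,E_{\partial R_0}$. The paper's proof is in fact terser than yours---it verifies only the boundedness of the extended trace and asserts that the decomposition ``yields the desired'' one, leaving the identification ${\rm Ker}\,E_{\partial R_0}=\overline{C_0^\infty(R_0)}$ implicit; your partition-of-unity argument supplies exactly this missing detail.
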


\begin{proof}
We extend $E_{\partial R_0}$ to $D_p(R_0)$ as a bounded linear operator. 
Then, it follows that $D_p(R_0)={\rm Im}\,P_{\partial R_0} \oplus {\rm Ker}\,E_{\partial R_0}$,
which yields the desired decomposition.

For any $\Phi \in D_p(R_0)$,
we see that $\sigma_*(\Phi|_{U_0})$ is a function on $V_0$
whose $p$-Dirichlet norm with respect to $\rho_{\mathbb D}$ is finite by the comparison of
hyperbolic densities of $R_0$ and $\mathbb D$ under $\sigma$. We extend $\sigma_*(\Phi|_{U_0})$ to  
$\widetilde \Phi \in D_p(\mathbb D)$ so that their $p$-Dirichlet norms are comparable. Then,
Theorem \ref{dirichletp} yields $\widetilde \Psi \in {\rm HD}_p(\mathbb D)$
that has the same boundary extension to $\mathbb S$ as $\widetilde \Phi$. This boundary function $\psi$ on $\mathbb S$ is 
defined as $E_{\partial R_0}(\Phi)$.
By
$$
\Vert \psi \Vert_{B_p} \asymp \Vert \widetilde \Psi \Vert_{D_p} \lesssim \Vert \widetilde \Phi \Vert_{D_p} 
\lesssim \Vert \Phi \Vert_{D_p},
$$
we have the result. 
\end{proof}

The transmission operator $\Theta_\gamma:{\rm HD}_p(R_1) \to {\rm HD}_p(R_2)$ can be described
as in the planar case
once we fix the quasicircle $\Gamma=\sigma(\gamma)$.
For $\Psi_i \in {\rm HD}_p(R_i)$ $(i=1,2)$, we 
regard its boundary function on $\partial R_i=\gamma$ as a function on $\Gamma$ through $\sigma$.
Namely, it is the image of 
the boundary extension map $E_{\partial R_i}:{\rm HD}_p(R_i) \to B_p(\mathbb S)$ $(i=1,2)$ in Theorem \ref{extension}.
Then, the transmission is given
in terms of the composition operator $C_{h_\Gamma}$ of $B_p(\mathbb S)$ as follows.

\begin{lemma}\label{step}
The transmission operator $\Theta_\gamma:{\rm HD}_p(R_1) \to {\rm HD}_p(R_2)$
is represented as $P_{\partial R_2} \circ C_{h_\Gamma} \circ E_{\partial R_1}$,
where $h_\Gamma$ is the quasisymmetric homeomorphism of $\mathbb S$ associated with $\Gamma=\sigma(\gamma)$.
\end{lemma}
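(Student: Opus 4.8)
The plan is to unwind the construction of $\Theta_\gamma$ given in the previous subsection and to match each of its steps with one of the three operators $E_{\partial R_1}$, $C_{h_\Gamma}$, and $P_{\partial R_2}$. Recall that $\Theta_\gamma(\Phi_1)$ is built by transporting $\Phi_1 \in {\rm HD}_p(R_1)$ through $\sigma$ to a harmonic function $\Psi_1 = \sigma_*(\Phi_1|_{U_1})$ on $V_1 \subset \Omega_1$, extending it to $D_p(\Omega_1)$ and invoking the $p$-Dirichlet principle (Theorem \ref{dirichletp}) to obtain $\widetilde{\Psi}_1 \in {\rm HD}_p(\Omega_1)$ with the same boundary values on $\Gamma$, applying the planar transmission operator $\Theta_\Gamma$ to produce $\widetilde{\Psi}_2 \in {\rm HD}_p(\Omega_2)$, and finally selecting $\Phi_2 \in {\rm HD}_p(R_2)$ whose boundary extension to $\gamma$ matches that of $\sigma^*(\widetilde{\Psi}_2|_{V_2})$. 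I would verify the claimed factorization by showing that these four stages read off exactly as $E_{\partial R_1}$, then $C_{h_\Gamma}$, then $P_{\partial R_2}$.

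First I would treat the input side. Since $\widetilde{\Psi}_1$ has the same boundary values on $\Gamma$ as $\Psi_1$, the element $E_1 \circ (F_1)^*(\widetilde{\Psi}_1) \in B_p(\mathbb S)$ is precisely the boundary function of $\Phi_1$ transported to $\gamma$ by $\sigma$, carried to $\Gamma$, and then to $\mathbb S$ through the normalized Riemann map $F_1$. By the definition of the boundary extension operator in Theorem \ref{extension}, this element is exactly $E_{\partial R_1}(\Phi_1)$; the identification uses that $\sigma$ and $F_1$ are conformal, so nontangential (equivalently, harmonic-measure) boundary values are preserved under transport.

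Next I would read off the middle factor from the sequence \eqref{sequence}. Writing $\Theta_\Gamma = (F_2)_* \circ P_2 \circ C_{h_\Gamma} \circ E_1 \circ (F_1)^*$ and using $E_2 \circ P_2 = {\rm id}$ (Theorem \ref{PE} and its $\mathbb D^*$ analogue), the boundary function of $\widetilde{\Psi}_2 = \Theta_\Gamma(\widetilde{\Psi}_1)$ transported to $\mathbb S$ by $F_2$ equals $E_2 \circ (F_2)^*(\widetilde{\Psi}_2) = C_{h_\Gamma}\bigl(E_1 \circ (F_1)^* \widetilde{\Psi}_1\bigr) = C_{h_\Gamma}\bigl(E_{\partial R_1}(\Phi_1)\bigr)$. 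Thus the transmitted boundary data is obtained from $E_{\partial R_1}(\Phi_1)$ by applying $C_{h_\Gamma}$. For the output side, the function $\Phi_2$ chosen in the construction is by definition the unique element of ${\rm HD}_p(R_2)$ whose boundary extension $E_{\partial R_2}(\Phi_2)$ equals this transmitted boundary function; since $P_{\partial R_2} = E_{\partial R_2}^{-1}$ by Theorem \ref{extension} and Lemma \ref{new}, we get $\Phi_2 = P_{\partial R_2}\bigl(C_{h_\Gamma}(E_{\partial R_1}(\Phi_1))\bigr)$, which is the asserted representation.

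The main obstacle to make fully rigorous is the consistency of boundary values across the three conformal pictures, on $R_i$, on $\Gamma \subset \mathbb C$, and on $\mathbb S$: one must confirm that the boundary function produced at each stage is literally the same element of $B_p(\mathbb S)$, not merely an element with the same norm. This hinges on the uniqueness statement in Theorem \ref{extension} and on the almost-everywhere preservation of boundary values under conformal transport by $\sigma$, $F_1$, and $F_2$. Once this single identification of boundary data is secured, the lemma follows immediately by composing the three factors identified above.
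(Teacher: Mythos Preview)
Your proof is correct and follows exactly the approach implicit in the paper: the lemma is stated there without proof, as the preceding paragraph already identifies $E_{\partial R_i}$ with the composition of $\sigma$ and the Riemann maps $F_i$, so the factorization is treated as an immediate unwinding of the definition of $\Theta_\gamma$ together with the planar decomposition \eqref{sequence}. Your write-up simply makes this unwinding explicit and carefully tracks the boundary-value identifications, which is the only point requiring care.
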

\smallskip

The boundedness of $\Theta_\gamma$ is proved in \cite[Theorem 3.29]{SS20} and \cite[Theorem 3.44]{SS21} for $p=2$.
Moreover, in account of Corollary \ref{main1}, we also see that
the boundedness of $\Theta_\gamma$ conversely characterize a quasicircle.
Thus, we have the following conclusion, 
which is proved in \cite[Theorem 1.1]{Shiga} for $p=2$ and for a general Riemann surface $R$.

\begin{theorem}\label{shiga}
Let $\gamma$ be a closed Jordan curve in a compact Riemann surface $R$.
If $\gamma$ is a quasicircle, then 
the transmission operator $\Theta_\gamma:{\rm HD}_p(R_1) \to {\rm HD}_p(R_2)$ is a Banach isomorphism.
Conversely, if $\Theta_\gamma$ gives a Banach isomorphism between ${\rm HD}_p(R_1)$ and
${\rm HD}_p(R_2)$, then $\gamma$ is a quasicircle.
\end{theorem}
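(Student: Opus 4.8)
The plan is to reduce the whole statement to the planar theory already assembled, using the factorization of $\Theta_\gamma$ through the unit circle. The two ingredients that make this possible are Theorem \ref{extension}, which guarantees that the boundary extension maps $E_{\partial R_1}$ and $E_{\partial R_2}$ are Banach isomorphisms \emph{irrespective} of any regularity of $\gamma$, and Lemma \ref{step}, which writes $\Theta_\gamma = P_{\partial R_2} \circ C_{h_\Gamma} \circ E_{\partial R_1}$. Since $P_{\partial R_2} = E_{\partial R_2}^{-1}$ is likewise a Banach isomorphism by Theorem \ref{extension}, the entire regularity content of $\Theta_\gamma$ is carried by the single composition operator $C_{h_\Gamma}$ on $B_p(\mathbb S)$, and the criterion for $\gamma$ to be a quasicircle will be read off from $C_{h_\Gamma}$ alone.

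For the forward implication, suppose $\gamma$ is a quasicircle. By the definition of a quasicircle on a Riemann surface, $\Gamma = \sigma(\gamma)$ is a quasicircle in $\mathbb C$, so its welding homeomorphism $h_\Gamma = F_1^{-1} \circ F_2|_{\mathbb S}$ is quasisymmetric. Theorem \ref{vod} then yields that $C_{h_\Gamma}$ is a Banach automorphism of $B_p(\mathbb S)$. Combining this with Lemma \ref{step} and the fact that $E_{\partial R_1}$ and $P_{\partial R_2}$ are Banach isomorphisms (Theorem \ref{extension}), I would conclude that $\Theta_\gamma = P_{\partial R_2} \circ C_{h_\Gamma} \circ E_{\partial R_1}$ is a composition of Banach isomorphisms, hence itself a Banach isomorphism.

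For the converse, suppose $\Theta_\gamma$ is a Banach isomorphism. Using that $E_{\partial R_1}$ and $P_{\partial R_2}$ are Banach isomorphisms, I would solve the factorization of Lemma \ref{step} for the middle factor, obtaining $C_{h_\Gamma} = E_{\partial R_2} \circ \Theta_\gamma \circ P_{\partial R_1}$, which is therefore a Banach automorphism of $B_p(\mathbb S)$. By Theorem \ref{vod}, $h_\Gamma$ is quasisymmetric. Feeding this back into the planar sequence \eqref{sequence}, in which every map other than $C_{h_\Gamma}$ is already a Banach isomorphism (by Theorem \ref{PE} together with the Riemann-mapping pull-back and push-forward), one finds that the planar transmission operator $\Theta_\Gamma : {\rm HD}_p(\Omega_1) \to {\rm HD}_p(\Omega_2)$ is a Banach isomorphism. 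The converse part of Corollary \ref{main1} then forces $\Gamma$ to be a quasicircle, and since the notion of a quasicircle on $R$ is defined precisely by $\sigma(\gamma) = \Gamma$ being a quasicircle in $\mathbb C$, and is independent of the choice of $U$ and $\sigma$, it follows that $\gamma$ is a quasicircle.

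The bulk of the work has already been discharged in the preceding results, so the remaining argument is essentially a bookkeeping of isomorphisms. The genuine obstacle is the one settled in Theorem \ref{extension} and Lemma \ref{new}: establishing that $E_{\partial R_i}$ is a Banach isomorphism for a compact bordered Riemann surface \emph{without} assuming anything about the regularity of $\gamma$. This is exactly what allows the composition operator $C_{h_\Gamma}$ to be isolated and the planar quasicircle criterion (Corollary \ref{main1}, via Theorem \ref{vod}) to be transported to the surface. Once this isolation is available, both implications follow symmetrically.
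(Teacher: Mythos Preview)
Your proposal is correct and follows essentially the same route as the paper, which simply observes that the theorem is a consequence of Lemma \ref{step}, Theorem \ref{extension}, and Corollary \ref{main1} combined. One small redundancy: once Theorem \ref{vod} tells you $h_\Gamma$ is quasisymmetric, conformal welding already gives that $\Gamma$ is a quasicircle, so the detour through $\Theta_\Gamma$ and the converse of Corollary \ref{main1} is unnecessary (though not incorrect).
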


\section{Variation of the space of Dirichlet finite harmonic functions}

In this section, we deform a Riemann surface $R$ quasiconformally and
examine how the spaces of Dirichlet finite harmonic functions are affected by this deformation.
In particular, we are interested in the case where $R$ is divided by a Weil--Petersson curve $\gamma$,
and consider the variation of the transmission of Dirichlet finite harmonic functions across $\gamma$.

\subsection{Quasiconformal deformation of Riemann surfaces}
Let $M(R)$ be the space of all Beltrami coefficients (differentials) $\mu=\mu(z)d\bar z/dz$ on $R$.
For $p>1$, we define
$M_p(R)$ as the subset of Beltrami coefficients
that are $p$-integrable with respect to the hyperbolic metric $\rho_{R}(z)|dz|$ on $R$;
$$
\Vert \nu \Vert_p=\left(\int_{R} |\nu(z)|^p\rho_{R}^2(z)dxdy \right)^{1/p}<\infty.
$$
The topology on $M_p(R)$ is defined by the norm $\Vert \nu \Vert_p+\Vert \nu \Vert_\infty$.
For any $\nu \in M(R)$, the quasiconformal homeomorphism of $R$ with complex dilatation $\nu$ is denoted by $g^\nu$ and
its image by $R^\nu=g^\nu(R)$. 

Let $R_0$ be a compact bordered Riemann surface whose boundary $\partial R_0$ is a simple closed curve $\gamma$.
We fix a conformal homeomorphism of a neighborhood $U_0$ of $\gamma$ in $R_0$ to a neighborhood $V_0$ of $\mathbb S$ in $\mathbb D$.
For any $\nu \in M(R_0)$, we take the Beltrami coefficient $\sigma_*(\nu|_{U_0})$ on $V_0$ by 
pushing $\nu|_{U_0}$ forward,
and extend it to $\mathbb D$ by $0$ on $\mathbb D \setminus V_0$; the resulting one is denoted by $\tilde \nu$.
We define this correspondence $\nu \in M(R_0) \mapsto \tilde \nu \in M(\mathbb D)$ 
as a map $Q:M(R_0) \to M(\mathbb D)$. Obviously, $Q$ is continuous in the topology of $\Vert \cdot \Vert_\infty$, but we see more.

\begin{lemma}\label{L}
For any $\nu \in M_p(R_0)$, $Q(\nu)=\tilde \nu$ belongs to $M_p(\mathbb D)$. Moreover,
$Q:M_p(R_0) \to M_p(\mathbb D)$ is continuous.
\end{lemma}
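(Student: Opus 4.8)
The plan is to reduce both assertions to a single pointwise comparison of hyperbolic densities on the collar $U_0$ and then exploit the linearity of $Q$. First I note that, since $\sigma$ is conformal, the pushforward $\sigma_*$ acts linearly on Beltrami differentials and preserves their modulus: $|\tilde\nu(\sigma(z))| = |\nu(z)|$ for $z \in U_0$, while $\tilde\nu \equiv 0$ off $V_0$. Hence $Q$ is linear and $\Vert \tilde\nu \Vert_\infty \le \Vert \nu \Vert_\infty < 1$, so $\tilde\nu$ is a genuine Beltrami coefficient on $\mathbb D$. To control $\Vert \tilde\nu \Vert_p$, I change variables $w = \sigma(z)$ in the defining integral. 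Using $dA_w = |\sigma'(z)|^2\,dA_z$ and $\rho_{\mathbb D}(\sigma(z))|\sigma'(z)| = (\sigma^*\rho_{\mathbb D})(z)$, I obtain
\begin{equation*}
\int_{\mathbb D}|\tilde\nu|^p\rho_{\mathbb D}^2\,dA_w = \int_{V_0}|\tilde\nu|^p\rho_{\mathbb D}^2\,dA_w = \int_{U_0}|\nu|^p(\sigma^*\rho_{\mathbb D})^2\,dA_z,
\end{equation*}
the first equality holding because $\tilde\nu$ vanishes on $\mathbb D\setminus V_0$. Thus everything comes down to comparing the pulled-back density $\sigma^*\rho_{\mathbb D}$ with the intrinsic density $\rho_{R_0}$ on $U_0$.

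The key step, and the only nontrivial one, is the bound $\sigma^*\rho_{\mathbb D} \le C\,\rho_{R_0}$ on $U_0$ for some constant $C \ge 1$. This is precisely the comparison of hyperbolic densities of $R_0$ and $\mathbb D$ under $\sigma$ already invoked in Section~\ref{5}. Since $\gamma$ is the ideal boundary of the complete hyperbolic surface $R_0$, a collar of $\gamma$ is a hyperbolic funnel; in the conformal coordinate $w = \sigma(z)$, for which $\gamma$ corresponds to $\mathbb S$, the funnel normal form gives $\rho_{R_0} \asymp (1-|w|)^{-1}$ as $|w| \to 1$, which matches $\rho_{\mathbb D}(w) = 2(1-|w|^2)^{-1} \asymp (1-|w|)^{-1}$. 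Near the inner edge of the collar $\rho_{\mathbb D}$ stays bounded while $\rho_{R_0}$ is bounded below by a positive constant, so the ratio $\sigma^*\rho_{\mathbb D}/\rho_{R_0}$ is controlled there as well; combining the two regions yields the bound on all of $U_0$ (in fact the two-sided comparison $\sigma^*\rho_{\mathbb D}\asymp\rho_{R_0}$, of which only the upper bound is needed here). I expect the funnel asymptotics to be the main technical point, although it is standard for complete hyperbolic metrics and is exactly the comparison used earlier in the paper.

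Granting this, the displayed identity gives
\begin{equation*}
\Vert \tilde\nu \Vert_{p}^p = \int_{U_0}|\nu|^p(\sigma^*\rho_{\mathbb D})^2\,dA_z \le C^2\int_{U_0}|\nu|^p\rho_{R_0}^2\,dA_z \le C^2\Vert \nu \Vert_p^p,
\end{equation*}
so $\tilde\nu \in M_p(\mathbb D)$ with $\Vert \tilde\nu \Vert_p \le C\Vert \nu \Vert_p$. For continuity I use that $Q$ is linear: applying the same bound to $\nu_n - \nu$ shows $\Vert Q(\nu_n)-Q(\nu) \Vert_p \le C\Vert \nu_n-\nu \Vert_p$, while $\Vert Q(\nu_n)-Q(\nu) \Vert_\infty \le \Vert \nu_n-\nu \Vert_\infty$ by modulus preservation. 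Hence $Q$ is Lipschitz with respect to the norm $\Vert \cdot \Vert_p + \Vert \cdot \Vert_\infty$ defining the topologies of $M_p(R_0)$ and $M_p(\mathbb D)$; equivalently, $Q$ is the restriction to $M_p(R_0)$ of a bounded linear operator between the ambient Banach spaces of $p$-integrable bounded Beltrami differentials, and is in particular continuous.
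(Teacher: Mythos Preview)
Your proof is correct and follows essentially the same approach as the paper: both reduce the $p$-integrability of $\tilde\nu$ to the comparison $\sigma^*\rho_{\mathbb D}\asymp\rho_{R_0}$ on the collar $U_0$, and both obtain continuity by applying the resulting norm estimate to differences $\nu_n-\nu$. You supply more detail on the change of variables and on the funnel asymptotics underlying the density comparison, and you make the linearity of $Q$ explicit, but these are elaborations of the same argument the paper gives in compressed form.
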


\begin{proof}
By the comparison of hyperbolic densities,
we have
\begin{equation*}
\int_{V_0}|\tilde \nu(z)|^p\rho_{\mathbb D}^2(z)dxdy  \asymp
\int_{U_0}|\nu(z)|^p\rho_{{R_0}}^2(z)dxdy \leq \int_{{R_0}}|\nu(z)|^p\rho_{{R_0}}^2(z)dxdy.
\end{equation*}
Hence, $\tilde \nu$ belongs to $M_p(\mathbb D)$. 
Suppose that a sequence $\nu_n$ converges to $\nu$ in $M_p({R_0})$. 
This means that $\Vert \nu-\nu_n \Vert_p \to 0$ and
$\Vert \nu-\nu_n \Vert_\infty \to 0$ as $n \to \infty$. Then, the continuity of
$Q$ on $M(R_0)$ implies that $\Vert \tilde \nu-\tilde \nu_n \Vert_\infty \to 0$.
For the convergence on the $L_p$-norm, the above estimate can be applied.
This proves that $Q$ is continuous.
\end{proof}

We deform a compact bordered Riemann surface with one boundary component 
quasiconformally and observe its effect on the space of Dirichlet finite harmonic functions.
Let $R_0$ be a compact bordered Riemann surface whose boundary $\partial R_0$ is a simple closed curve $\gamma$.
For any $\nu \in M_p({R_0})$, we consider the quasiconformal deformation $g^{\nu}:R_0 \to R_0^\nu$.
Let $\tilde \nu=Q(\nu) \in M_p(\mathbb D)$.
The normalized quasiconformal self-homeomorphism $H^{\tilde \nu}$ of $\mathbb D$
whose complex dilatation is $\tilde \nu$ induces a quasisymmetric homeomorphism
$h^{\tilde \nu}:\mathbb S \to \mathbb S$ by its extension to $\mathbb S$.

In Theorem \ref{extension}, we have seen that 
the boundary extension map $E_{\partial R_0}:{\rm HD}_p({R_0}) \to B_p(\mathbb S)$ 
is a Banach isomorphism. Hence, its quasiconformal deformation is also a Banach isomorphism.

\begin{proposition}\label{qc} 
For any $\nu \in M_p(R_0)$, let $g^{\nu}:R_0 \to R_0^\nu$ be the quasiconformal homeo\-morphism and
$h^{\tilde \nu}:\mathbb S \to \mathbb S$ the associated quasisymmetric homeomorphism.
Then, 
$$
P_{\partial R_0^\nu} \circ (h^{\tilde \nu})_* \circ E_{\partial {R_0}}:{\rm HD}_p(R_0) \to {\rm HD}_p(R_0^\nu)
$$ 
is a Banach isomorphism. 
\end{proposition}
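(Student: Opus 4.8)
The plan is to realize the operator as a composition of three maps, each of which is already known to be a Banach isomorphism, and then invoke the fact that a composition of Banach isomorphisms is again a Banach isomorphism. The three factors are the boundary extension $E_{\partial R_0}:{\rm HD}_p(R_0)\to B_p(\mathbb S)$, the composition operator $(h^{\tilde\nu})_*:B_p(\mathbb S)\to B_p(\mathbb S)$ induced by $h^{\tilde\nu}$, and the reconstruction operator $P_{\partial R_0^\nu}:B_p(\mathbb S)\to{\rm HD}_p(R_0^\nu)$. The first factor is a Banach isomorphism by Theorem \ref{extension}, and the middle factor is a Banach automorphism by Theorem \ref{vod}, once we observe that $h^{\tilde\nu}$ is quasisymmetric; this holds because $h^{\tilde\nu}=H^{\tilde\nu}|_{\mathbb S}$ is the boundary restriction of the quasiconformal self-homeomorphism $H^{\tilde\nu}$ of $\mathbb D$.

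The genuine point to verify is that Theorem \ref{extension} applies to the deformed surface $R_0^\nu$, which requires fixing a conformal identification of a neighborhood of $\gamma^\nu=\partial R_0^\nu$ with a neighborhood of $\mathbb S$ in $\mathbb D$. I would take
$$
\sigma^\nu:=H^{\tilde\nu}\circ\sigma\circ(g^\nu|_{U_0})^{-1}
$$
on $U_0^\nu=g^\nu(U_0)$ and check that it is conformal. By Lemma \ref{L}, $\tilde\nu=Q(\nu)$ lies in $M_p(\mathbb D)$ and coincides with $\sigma_*(\nu|_{U_0})$ on $V_0$. Applying the composition rule for complex dilatations to the conformal map $\sigma$ then gives that $H^{\tilde\nu}\circ\sigma$ has complex dilatation $\nu$ on $U_0$, the very same as $g^\nu$. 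Since two quasiconformal maps sharing a Beltrami coefficient differ by a conformal map, $\sigma^\nu$ is conformal and sends $U_0^\nu$ onto the neighborhood $H^{\tilde\nu}(V_0)$ of $\mathbb S$. With $\sigma^\nu$ fixed, Theorem \ref{extension} applies to $R_0^\nu$, so $E_{\partial R_0^\nu}$ and its inverse $P_{\partial R_0^\nu}$ are Banach isomorphisms.

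Finally I would record why $h^{\tilde\nu}$ is the correct transition factor rather than an arbitrary insertion: for $\zeta\in\gamma$, the boundary point $\sigma(\zeta)\in\mathbb S$ attached to $\zeta$ corresponds, after the deformation, to
$$
\sigma^\nu(g^\nu(\zeta))=H^{\tilde\nu}(\sigma(\zeta))=h^{\tilde\nu}(\sigma(\zeta)),
$$
so the two boundary parametrizations of $R_0$ and $R_0^\nu$ differ on $\mathbb S$ exactly by $h^{\tilde\nu}$. This identifies $(h^{\tilde\nu})_*$ as precisely the change of parametrization intervening between $E_{\partial R_0}$ and $P_{\partial R_0^\nu}$, so that the prescribed composition is the natural deformation map ${\rm HD}_p(R_0)\to{\rm HD}_p(R_0^\nu)$. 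The main obstacle is the dilatation cancellation establishing the conformality of $\sigma^\nu$, which is where the specific choices $\tilde\nu=Q(\nu)$ and $h^{\tilde\nu}$ enter; once that is in place, the remainder is the formal observation that the operator is a composition of three Banach isomorphisms.
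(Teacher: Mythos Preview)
Your proof is correct and follows the same approach as the paper, which treats the proposition as an immediate consequence of Theorem \ref{extension} (the paper does not give a separate proof, only the sentence ``Hence, its quasiconformal deformation is also a Banach isomorphism'' preceding the statement). You supply more detail than the paper does: in particular, your explicit construction of the conformal chart $\sigma^\nu=H^{\tilde\nu}\circ\sigma\circ(g^\nu|_{U_0})^{-1}$ and the dilatation computation verifying its conformality make precise what the paper leaves implicit, and your final paragraph explaining why $h^{\tilde\nu}$ is the correct transition on $\mathbb S$ is a useful sanity check that the composed operator really is the natural one.
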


\subsection{Deformation of Weil--Petersson curves}
Just like a quasicircle on a Riemann surface $R$, we define a $p$-Weil--Petersson curve on $R$ and
consider its deformation induced by certain quasiconformal homeomorphisms of $R$.

\begin{definition}
We say that a closed Jordan curve $\gamma$ on a Riemann surface $R$ is
a {\em $p$-Weil--Petersson curve} if there exist an annular neighborhood $U$ of $\gamma$ in
$R$ and a conformal embedding $\sigma:U \to \widehat{\mathbb C}$ such that the image $\sigma(\gamma)$ is 
a $p$-Weil-Petersson curve in $\widehat{\mathbb C}$.
\end{definition}
\smallskip

This is well defined independently of the choice of $U$ and $\sigma$.
We can verify this as follows.
Suppose that there are another neighborhood $U'$ and another conformal embedding $\sigma':U' \to \widehat{\mathbb C}$.
If $\sigma(\gamma)$ is a $p$-Weil-Petersson curve, then
$\sigma(\gamma)=G(\mu_1,\mu_2)(\mathbb S)$ for some $\mu_1 \in M_p(\mathbb D)$ and $\mu_2 \in M_p(\mathbb D^*)$.
We define $\widehat G=\sigma' \circ \sigma^{-1}\circ G(\mu_1,\mu_2)$ 
on the neighborhood $G(\mu_1,\mu_2)^{-1}(\sigma(U \cap U'))$ of $\mathbb S$.
The complex dilatation of $\widehat G$ on this neighborhood of $\mathbb S$ is just the restriction of
$\mu_1$ and $\mu_2$. We extend $\widehat G$ quasiconformally to $\widehat {\mathbb C}$ in a suitable way
by taking a smaller neighborhood if necessary (see \cite[Chap.II, Theorem 8.1]{LV}).
We denote the resulting quasiconformal self-homeomorphism of $\widehat {\mathbb C}$ 
by $G(\mu_1',\mu_2')$. Since $\mu_1=\mu_1'$ and $\mu_2=\mu_2'$ on the neighborhood of $\mathbb S$,
we see that $\mu_1' \in M_p(\mathbb D)$ and $\mu_2' \in M_p(\mathbb D^*)$. Moreover,
$G(\mu_1',\mu_2')(\mathbb S)=\sigma'(\gamma)$.
Hence, $\sigma'(\gamma)$ is a $p$-Weil--Petersson curve by Proposition \ref{double}.

As before, let $R$ be a compact Riemann surface and $\gamma$ a $p$-Weil--Petersson curve in $R$
that divides $R$ into $R_1$ and $R_2$.
Suppose that there are an annular neighborhood $U$ and a conformal embedding
$\sigma:U \to \widehat{\mathbb C}$ such that $\sigma(\gamma)=\mathbb S$. 
This assumption is not essential and imposed for the sake of simplicity of notation.
We may start with the usual setting where $\sigma(\gamma)$ is merely a $p$-Weil--Petersson curve.

Let $M_p(R_i)$ be the space of $p$-integrable Beltrami differentials on $R_i$ and
$T_p(R_i)$ the $p$-integrable Teich\-m\"ul\-ler space for $R_i$ given by the set of Teichm\"uller
equivalence classes of $M_p(R_i)$ $(i=1,2)$. This integrable Teich\-m\"ul\-ler space for 
a Riemann surface in general is introduced in \cite{Yan} for $p \geq 2$, and also in \cite{RSS}
for a compact bordered Riemann surface. 
The extension to the case $p>1$ is also possible; at least the metric space $T_p(R)$ can be constructed in the same way.
The ordinary Teichm\"uller space $T(R)$ for $R$ is the set of Teichm\"uller
equivalence classes of $M(R)$.

Let $g(\nu_1, \nu_2):R \to R(\nu_1, \nu_2)$ be the quasiconformal deformation of $R$
determined by $([\nu_1],[\nu_2]) \in T_p(R_1) \times T_p(R_2)$.
Here, $R(\nu_1, \nu_2)$ is the Riemann surface determined by this deformation.
We call $g(\nu_1, \nu_2)$ 
a $p$-Weil--Petersson quasiconformal homeomorphism of $R$ with respect to $\gamma$.
Then, a closed Jordan curve $g(\nu_1, \nu_2)(\gamma)$ in $R(\nu_1, \nu_2)$ is well defined by $([\nu_1],[\nu_2])$,
and it divides $R(\nu_1, \nu_2)$ into $R_1^{\nu_1}$ and $R_2^{\nu_2}$.
By Proposition \ref{double}, we see that $g(\nu_1, \nu_2)(\gamma)$ is also a $p$-Weil--Petersson curve on $R(\nu_1, \nu_2)$.

\subsection{Strong convergence of the transmission operators}

We continue the setting in the previous subsection, where a compact Riemann surface $R$
is divided into $R_1$ and $R_2$ by a $p$-Weil--Petersson curve $\gamma$ such that a conformal 
embedding $\sigma:U \to \widehat{\mathbb C}$ maps $\gamma$ onto $\mathbb S$.
Under the $p$-Weil--Petersson quasiconformal deformation $g(\nu_1, \nu_2):R \to R(\nu_1, \nu_2)$
defined by $([\nu_1],[\nu_2]) \in T_p(R_1) \times T_p(R_2)$,
we investigate the transmission operator 
$$
\Theta_{g(\nu_1, \nu_2)(\gamma)}:{\rm HD}_p(R_1^{\nu_1}) \to {\rm HD}_p(R_2^{\nu_2}).
$$

By restricting $\nu_1$ and $\nu_2$ to $U$, pushing them forward to $V$ by $\sigma$, 
and extending to the outside of $V$ by $0$,
we obtain $\tilde \nu_1 \in M_p(\mathbb D)$ and $\tilde \nu_2 \in M_p(\mathbb D^*)$.
These Beltrami coefficients 
define the pair $([\tilde \nu_1],[\tilde \nu_2])$ in $T_p(\mathbb D) \times T_p(\mathbb D^*)$.
By Lemma \ref{L},
this correspondence 
$$
\widetilde Q:T_p(R_1) \times T_p(R_2) \to T_p(\mathbb D) \times T_p(\mathbb D^*)
$$
is continuous. 

We vary $([\nu_1],[\nu_2])$ in $T_p(R_1) \times T_p(R_2)$,
and consider the convergence of a sequence of the transmission operators $\Theta_{g(\nu_1,\nu_2)(\gamma)}$.
We set up the situation in the following two claims.
The first one comes from the planar case.

\begin{proposition}\label{compd}
For $\mu_1 \in M_p(\mathbb D)$ and $\mu_2 \in M_p(\mathbb D^*)$, 
the quasisymmetric homeomorphism $h_\Gamma$ of $\mathbb S$ associated with 
the $p$-Weil--Petersson curve $\Gamma=G(\mu_1,\mu_2)(\mathbb S)$
is given as $h_\Gamma=h^{\mu_1} \circ h^{\bar \mu_2^{-1}}$.
\end{proposition}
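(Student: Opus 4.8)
The plan is to reduce everything to Proposition~\ref{double}, which already expresses $\Gamma$ as the image of $\mathbb S$ under a single normalized quasiconformal map $F_\nu$ with $\nu = \mu_1 \ast \bar \mu_2^{-1} \in M_p(\mathbb D)$. Since $F_\nu$ has vanishing complex dilatation on $\mathbb D^*$ and is normalized to fix $1$, $i$, $-i$, its restriction $F_2 := F_\nu|_{\mathbb D^*}$ is already the normalized Riemann mapping onto $\Omega_2 = F_\nu(\mathbb D^*)$. The task then becomes to identify the normalized Riemann mapping $F_1$ onto $\Omega_1 = F_\nu(\mathbb D)$ and to evaluate the welding $h_\Gamma = F_1^{-1} \circ F_2|_{\mathbb S}$.

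To produce $F_1$, I would invoke the standard principle that two quasiconformal homeomorphisms of a domain sharing the same Beltrami coefficient differ by a conformal factor. On $\mathbb D$ both $F_\nu|_{\mathbb D}$ and $H^\nu$ carry the Beltrami coefficient $\nu$, so $F_1 := F_\nu|_{\mathbb D} \circ (H^\nu)^{-1}$ is $1$-quasiconformal, hence conformal, from $\mathbb D$ onto $\Omega_1$. Because $H^\nu$ fixes $1$, $i$, $-i$ and $F_\nu$ fixes $1$, $i$, $-i$, the composite $F_1$ fixes these three boundary points as well, so by uniqueness of the three-point normalized conformal map $F_1$ is precisely the normalized Riemann mapping onto $\Omega_1$. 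Since $\Gamma$ is a quasicircle, the Carath\'eodory theorem guarantees that all the conformal maps involved extend to boundary homeomorphisms, so the boundary restrictions below are legitimate, and $F_\nu|_{\mathbb S}$ is unambiguous as $F_\nu$ is a homeomorphism of $\widehat{\mathbb C}$.

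With these identifications,
$$
h_\Gamma = F_1^{-1} \circ F_2|_{\mathbb S} = H^\nu \circ (F_\nu|_{\mathbb D})^{-1} \circ F_\nu|_{\mathbb S}.
$$
On $\mathbb S$ the conformal map $(F_\nu|_{\mathbb D})^{-1}$ inverts $F_\nu$, so $(F_\nu|_{\mathbb D})^{-1} \circ F_\nu|_{\mathbb S}$ is the identity, leaving $h_\Gamma = H^\nu|_{\mathbb S} = h^\nu = h^{\mu_1 \ast \bar \mu_2^{-1}}$. Finally, by the very definition of the group operation $\ast$ in $T_p$, namely $\pi(\mu_1 \ast \bar \mu_2^{-1}) = h^{\mu_1} \circ h^{\bar \mu_2^{-1}}$, and since $H^{\bar \mu_2^{-1}}$ preserves $\mathbb S$ so that restriction to $\mathbb S$ respects the composition $H^{\mu_1} \circ H^{\bar \mu_2^{-1}}$, I obtain $h_\Gamma = h^{\mu_1} \circ h^{\bar \mu_2^{-1}}$, as claimed.

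The routine ingredients (the Beltrami composition principle, Carath\'eodory extension, and uniqueness of the normalized Riemann map) are standard. The one point demanding care is the bookkeeping of normalizations: verifying that the factorizations $F_1 = F_\nu|_{\mathbb D} \circ (H^\nu)^{-1}$ and $F_2 = F_\nu|_{\mathbb D^*}$ reproduce exactly the three-point normalized Riemann mappings used to define $h_\Gamma$, so that the welding lands on the normalized representative rather than on an $\mathrm{Aut}(\mathbb D)$-ambiguous one. This is where I expect the main, if modest, obstacle to lie.
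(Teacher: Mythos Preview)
Your proof is correct and follows essentially the same route as the paper: both invoke Proposition~\ref{double} to write $\Gamma = F_\nu(\mathbb S)$ with $\nu = \mu_1 \ast \bar\mu_2^{-1}$, from which the welding $h_\Gamma = h^\nu = h^{\mu_1} \circ h^{\bar\mu_2^{-1}}$ falls out once the normalized Riemann mappings are identified. Your version simply supplies the bookkeeping (the factorizations $F_2 = F_\nu|_{\mathbb D^*}$ and $F_1 = F_\nu|_{\mathbb D} \circ (H^\nu)^{-1}$, and the verification of the three-point normalization) that the paper's two-line proof leaves implicit.
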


\begin{proof}
By Proposition \ref{double}, we have 
$\Gamma=F_{\mu_1 \ast \bar \mu_2^{-1}}(\mathbb S)=F_{\mu_1} \circ H^{\bar \mu_2^{-1}}(\mathbb S)$.
From this, the statement follows.
\end{proof}

\begin{lemma}\label{setup}
For $\nu_1 \in M_p(R_1)$ and $\nu_2 \in M_p(R_2)$, let $\Gamma=G(\tilde \nu_1,\tilde \nu_2)(\mathbb S)$
be the $p$-Weil--Petersson curve defined by $([\tilde \nu_1],[\tilde \nu_2])=\widetilde Q([\nu_1],[\nu_2])$.
Then, the transmission operator $\Theta_{g(\nu_1,\nu_2)(\gamma)}$ is represented as
$$
\Theta_{g(\nu_1,\nu_2)(\gamma)}
=P_{\partial R_2^{\nu_2}} \circ (h^{{\bar{\tilde \nu}}_2})_* \circ
(h^{\tilde \nu_1})_*^{-1} \circ E_{\partial R_1^{\nu_1}}
=P_{\partial R_2^{\nu_2}} \circ C_{h_\Gamma} \circ E_{\partial R_1^{\nu_1}}.
$$
\end{lemma}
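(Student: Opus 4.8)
The plan is to reduce the statement to Lemma \ref{step} applied on the deformed surface $R(\nu_1,\nu_2)$, and then to invoke Proposition \ref{compd} to split the resulting composition operator. The crux is to exhibit the correct conformal chart near the deformed curve $g(\nu_1,\nu_2)(\gamma)$ so that its image is exactly $\Gamma=G(\tilde\nu_1,\tilde\nu_2)(\mathbb S)$.

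First I would construct a conformal chart on the neighborhood $g(\nu_1,\nu_2)(U)$ of $g(\nu_1,\nu_2)(\gamma)$ in $R(\nu_1,\nu_2)$ by setting
\[
\sigma'=G(\tilde\nu_1,\tilde\nu_2)\circ\sigma\circ g(\nu_1,\nu_2)^{-1}.
\]
The essential point is that $\sigma'$ is holomorphic for the complex structure of $R(\nu_1,\nu_2)$. To verify this, note that the quasiconformal map $g(\nu_1,\nu_2)\circ\sigma^{-1}\colon V\to g(\nu_1,\nu_2)(U)$ carries complex dilatation $\tilde\nu_1$ on $V_1$ and $\tilde\nu_2$ on $V_2$: the deformation $g(\nu_1,\nu_2)$ has dilatation $\nu_i$ on $U_i$, and expressing it in the source coordinate supplied by the conformal map $\sigma$ transforms the Beltrami coefficient by the push-forward $\sigma_*$, which is precisely $\tilde\nu_i$ on $V_i$ (the extension by zero outside $V$ being irrelevant here). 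Since $G(\tilde\nu_1,\tilde\nu_2)$ carries the very same dilatation $\tilde\nu_1$ on $\mathbb D\supset V_1$ and $\tilde\nu_2$ on $\mathbb D^*\supset V_2$, the two maps $g(\nu_1,\nu_2)\circ\sigma^{-1}$ and $G(\tilde\nu_1,\tilde\nu_2)|_V$ share the same complex dilatation on $V$; hence their composition $\sigma'=G(\tilde\nu_1,\tilde\nu_2)\circ(g(\nu_1,\nu_2)\circ\sigma^{-1})^{-1}$ is conformal on $g(\nu_1,\nu_2)(U)$. It follows that $\sigma'(g(\nu_1,\nu_2)(\gamma))=G(\tilde\nu_1,\tilde\nu_2)(\mathbb S)=\Gamma$, which by Proposition \ref{double} is a $p$-Weil--Petersson curve.

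Next I would apply Lemma \ref{step} to the deformed surface $R(\nu_1,\nu_2)$, which $g(\nu_1,\nu_2)(\gamma)$ divides into $R_1^{\nu_1}$ and $R_2^{\nu_2}$, using the chart $\sigma'$ with image curve $\Gamma$. This at once gives
\[
\Theta_{g(\nu_1,\nu_2)(\gamma)}=P_{\partial R_2^{\nu_2}}\circ C_{h_\Gamma}\circ E_{\partial R_1^{\nu_1}},
\]
where the boundary operators are understood relative to $\sigma'$ via Theorem \ref{extension}; this is the second displayed equality. To obtain the first, I would rewrite $C_{h_\Gamma}$ using Proposition \ref{compd}, namely $h_\Gamma=h^{\tilde\nu_1}\circ h^{\bar{\tilde\nu}_2^{-1}}$. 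Combining the elementary identities $C_a\circ C_b=C_{b\circ a}$, $(h)_*=C_{h^{-1}}$, and $h^{\bar{\tilde\nu}_2^{-1}}=(h^{\bar{\tilde\nu}_2})^{-1}$ yields
\[
C_{h_\Gamma}=C_{h^{\bar{\tilde\nu}_2^{-1}}}\circ C_{h^{\tilde\nu_1}}=(h^{\bar{\tilde\nu}_2})_*\circ(h^{\tilde\nu_1})_*^{-1}.
\]

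The main obstacle is the first step: correctly tracking the Beltrami coefficients through $\sigma$ and the extension by zero to confirm that $g(\nu_1,\nu_2)\circ\sigma^{-1}$ and $G(\tilde\nu_1,\tilde\nu_2)$ induce identical dilatations on $V$, so that $\sigma'$ is genuinely conformal. Lemma \ref{L} ensures $\tilde\nu_1\in M_p(\mathbb D)$ and $\tilde\nu_2\in M_p(\mathbb D^*)$, placing the relevant maps in the right classes; once the conformality of $\sigma'$ is secured, the remainder is a routine reduction to Lemma \ref{step} and Proposition \ref{compd}.
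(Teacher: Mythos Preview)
Your argument is correct, but it is organized differently from the paper's proof, and in fact proves the two displayed equalities in the opposite order.

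The paper's route is to work on the \emph{undeformed} surface $R$, where the chart $\sigma$ already sends $\gamma$ to $\mathbb S$; there Lemma \ref{step} gives $\Theta_\gamma=P_{\partial R_2}\circ E_{\partial R_1}$ with trivial composition operator. The paper then invokes Proposition \ref{qc} on each side to pass to $R_i^{\nu_i}$, obtaining the first displayed formula by conjugating the undeformed transmission by the deformation isomorphisms $P_{\partial R_i^{\nu_i}}\circ(h^{\tilde\nu_i})_*\circ E_{\partial R_i}$; the second equality then follows from Proposition \ref{compd}. Your approach bypasses Proposition \ref{qc} entirely: you build the two-sided conformal chart $\sigma'=G(\tilde\nu_1,\tilde\nu_2)\circ\sigma\circ g(\nu_1,\nu_2)^{-1}$ on the deformed surface directly, apply Lemma \ref{step} there to obtain the second equality at once, and then use Proposition \ref{compd} to split $C_{h_\Gamma}$ into the first form. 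What you gain is a self-contained construction that makes the chart on $R(\nu_1,\nu_2)$ explicit (and implicitly shows why the operators $E_{\partial R_i^{\nu_i}}$ and $P_{\partial R_i^{\nu_i}}$ agree with those in Proposition \ref{qc}, since $F_i^{-1}\circ\sigma'$ restricted to the $R_i^{\nu_i}$ side coincides with $H^{\tilde\nu_i}\circ\sigma\circ(g^{\nu_i})^{-1}$ up to the reflection for $i=2$). What the paper's route gains is modularity: it separates the deformation step (Proposition \ref{qc}) from the transmission step, which is convenient when one wants to iterate or vary the construction.
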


\begin{proof}
Proposition \ref{qc} gives
\begin{align}
P_{\partial R_1^{\nu_1}} \circ (h^{\tilde \nu_1})_* \circ E_{\partial {R_1}}&:{\rm HD}_p(R_1) \to {\rm HD}_p(R_1^{\nu_1});\\
P_{\partial R_2^{\nu_2}} \circ (h^{{\bar{\tilde \nu}}_2})_* \circ E_{\partial {R_2}}&:{\rm HD}_p(R_2) \to {\rm HD}_p(R_2^{\nu_2}),
\end{align}
and Lemma \ref{step} in particular implies that
$\Theta_{\mathbb S}=P_{\partial R_2} \circ E_{\partial R_1}$.
Hence, by these representations, we have
$$
\Theta_{g(\nu_1,\nu_2)(\gamma)}=P_{\partial R_2^{\nu_2}} \circ (h^{{\bar{\tilde \nu}}_2})_* \circ
(h^{\tilde \nu_1})_*^{-1} \circ E_{\partial R_1^{\nu_1}}.
$$
Since 
$C_{h_\Gamma}=(h^{\tilde \nu_1} \circ h^{{\bar{\tilde \nu}}_2^{-1}})^*=(h^{{\bar{\tilde \nu}}_2})_* \circ
(h^{\tilde \nu_1})_*^{-1}$ 
by Proposition \ref{compd},
we have the representation as in the statement.
\end{proof}

The main result can be formulated as follows. 
Under the representation of the transmission operators as in Lemma \ref{setup}, this is immediately verified by 
Proposition \ref{strong} with
the continuity of $\widetilde Q$. 

\begin{theorem}
Let $R$ be a compact Riemann surface and $\gamma$ a $p$-Weil--Petersson curve
which divides $R$ into $R_1$ and $R_2$.
Suppose that a sequence of pairs $([(\nu_1)_n],[(\nu_2)_n])$ converges to $([(\nu_1)_0],[(\nu_2)_0])$
in $T_p(R_1) \times T_p(R_2)$. 
Then, the transmission operators $\Theta_{\gamma_n}:{\rm HD}_p(R_1^{(\nu_1)_n}) \to {\rm HD}_p(R_2^{(\nu_2)_n})$ 
for the Weil--Petersson curves $\gamma_n=g((\nu_1)_n, (\nu_2)_n)(\gamma)$ 
on the Riemann surfaces $R((\nu_1)_n, (\nu_2)_n)$ $(n \geq 0)$ converge strongly to
$\Theta_{\gamma_0}$ in the sense that the corresponding composition operators $C_{h_{\Gamma_n}}$ acting on $B_p(\mathbb S)$ converge strongly to $C_{h_{\Gamma_0}}$,
where $\Gamma_n$ are the Weil-Petersson curves $G((\tilde \nu_1)_n,(\tilde \nu_2)_n)(\mathbb S)$ 
in $\widehat{\mathbb C}$
for $([(\tilde \nu_1)_n],[(\tilde \nu_2)_n])=\widetilde Q([(\nu_1)_n],[(\nu_2)_n])$.
\end{theorem}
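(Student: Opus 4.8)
The plan is to reduce the asserted strong convergence to a single application of Proposition~\ref{strong}, after expressing every transmission operator purely as a composition operator on $B_p(\mathbb{S})$. First I would invoke Lemma~\ref{setup}, which---under the identifications ${\rm HD}_p(R_1^{(\nu_1)_n}) \cong B_p(\mathbb{S}) \cong {\rm HD}_p(R_2^{(\nu_2)_n})$---realizes each $\Theta_{\gamma_n}$ as $P_{\partial R_2^{(\nu_2)_n}} \circ C_{h_{\Gamma_n}} \circ E_{\partial R_1^{(\nu_1)_n}}$, so that the dependence on $n$ is concentrated entirely in the composition operator $C_{h_{\Gamma_n}}$. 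By Proposition~\ref{compd}, the relevant quasisymmetric homeomorphism is $h_{\Gamma_n}=h^{(\tilde\nu_1)_n}\circ h^{\overline{(\tilde\nu_2)}_n^{-1}}$, associated with $\Gamma_n=G((\tilde\nu_1)_n,(\tilde\nu_2)_n)(\mathbb{S})$. Since the theorem defines strong convergence of $\Theta_{\gamma_n}$ precisely as the strong convergence of $C_{h_{\Gamma_n}}$, the whole statement is thereby reduced to showing $C_{h_{\Gamma_n}}\to C_{h_{\Gamma_0}}$ strongly on $B_p(\mathbb{S})$.

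Next I would trace the convergence of the data. By hypothesis $([(\nu_1)_n],[(\nu_2)_n])\to([(\nu_1)_0],[(\nu_2)_0])$ in $T_p(R_1)\times T_p(R_2)$, and since $\widetilde Q$ is continuous by Lemma~\ref{L}, this gives $([(\tilde\nu_1)_n],[(\tilde\nu_2)_n])\to([(\tilde\nu_1)_0],[(\tilde\nu_2)_0])$ in $T_p(\mathbb{D})\times T_p(\mathbb{D}^*)$. It then remains to verify that the assignment $([\mu_1],[\mu_2])\mapsto [\mu_1]\ast[\bar\mu_2^{-1}]=h^{\mu_1}\circ h^{\bar\mu_2^{-1}}$ is continuous from $T_p(\mathbb{D})\times T_p(\mathbb{D}^*)$ into $T_p$. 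The reflection $\mu_2\mapsto\bar\mu_2$ is an isometry of $M_p(\mathbb{D}^*)$ onto $M_p(\mathbb{D})$ (the hyperbolic densities match under reflection across $\mathbb{S}$), hence descends to a homeomorphism $T_p(\mathbb{D}^*)\to T_p$; and inversion and the group operation $\ast$ are continuous because $T_p$ is a topological group by Proposition~\ref{group}. Composing these continuous maps yields $h_{\Gamma_n}\to h_{\Gamma_0}$ in $T_p$.

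Finally I would apply Proposition~\ref{strong} directly: the convergence $h_{\Gamma_n}\to h_{\Gamma_0}$ in $T_p$ gives $C_{h_{\Gamma_n}}\to C_{h_{\Gamma_0}}$ strongly on $B_p(\mathbb{S})$, which is exactly the claimed strong convergence of $\Theta_{\gamma_n}$ to $\Theta_{\gamma_0}$.

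Since the analytic substance is already packaged in Proposition~\ref{strong} (the joint continuity of $(\phi,h)\mapsto C_h(\phi)$ from \cite{WM-4}) and in Lemma~\ref{L}, the only genuinely new point is the convergence $h_{\Gamma_n}\to h_{\Gamma_0}$ in the $T_p$-topology, so the main---and rather mild---obstacle lies in the second step: confirming that the reflection together with the group operations of $T_p$ preserves $T_p$-convergence. I expect this to hinge crucially on the topological-group structure from Proposition~\ref{group}, and I would take care that the convergence is established in $T_p$ and not merely in the universal Teich\-m\"ul\-ler space $T$, since the analogue of Proposition~\ref{strong} over $T$ is known only for $p=2$ (Theorem~\ref{strong0}) and remains open for general $p$.
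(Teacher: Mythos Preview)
Your proposal is correct and follows the same route as the paper: represent $\Theta_{\gamma_n}$ via $C_{h_{\Gamma_n}}$ using Lemma~\ref{setup}, pass the convergence through the continuous map $\widetilde Q$ (Lemma~\ref{L}), and apply Proposition~\ref{strong}. You have simply unpacked the one step the paper leaves implicit---that $([(\tilde\nu_1)_n],[(\tilde\nu_2)_n])\to([(\tilde\nu_1)_0],[(\tilde\nu_2)_0])$ yields $h_{\Gamma_n}\to h_{\Gamma_0}$ in $T_p$ via the reflection isometry and the topological-group structure of Proposition~\ref{group}.
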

\smallskip

In the case $p=2$, by using Theorem \ref{strong0} instead of Proposition \ref{strong}, we obtain the stronger assertion
than the above theorem for a quasicircle $\gamma$ and quasiconformal deformation of $R$.

\begin{theorem}
Let $R$ be a compact Riemann surface and $\gamma$ a quasicircle
which divides $R$ into $R_1$ and $R_2$.
Suppose that a sequence $[\nu_n]$ converges to $[\nu_0]$
in $T(R)$. 
Then, the transmission operators $\Theta_{\gamma_n}:{\rm HD}_2(R_1^{\nu_n}) \to {\rm HD}_2(R_2^{\nu_n})$ 
for the quasicircles $\gamma_n=g^{\nu_n}(\gamma)$ 
on the Riemann surfaces $R^{\nu_n}$ $(n \geq 0)$ converge strongly to
$\Theta_{\gamma_0}$ in the sense that the corresponding composition operators $C_{h_{\Gamma_n}}$ acting on $B_2(\mathbb S)$ converge strongly to $C_{h_{\Gamma_0}}$,
where $\Gamma_n$ are the quasicircles given as the image of $\mathbb S$ under the quasiconformal self-homeomorphism of $\widehat{\mathbb C}$ whose complex dilatation is $\sigma_*(\nu_n|_U)$ on $V$. 
\end{theorem}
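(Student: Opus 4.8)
The plan is to run the argument of the preceding theorem almost verbatim, replacing Proposition \ref{strong} by the stronger Theorem \ref{strong0} and replacing the integrable Teich\-m\"ul\-ler spaces by the ordinary ones. First I would apply Lemma \ref{step} to each deformed surface $R^{\nu_n}$ with its quasicircle $\gamma_n=g^{\nu_n}(\gamma)$, which yields the factorization
$$
\Theta_{\gamma_n}=P_{\partial R_2^{\nu_n}} \circ C_{h_{\Gamma_n}} \circ E_{\partial R_1^{\nu_n}},
$$
where $\Gamma_n$ is the planar $2$-quasicircle obtained as the image of $\mathbb S$ under the normalized quasiconformal self-homeomorphism of $\widehat{\mathbb C}$ with complex dilatation $\sigma_*(\nu_n|_U)$ on $V$ and $0$ elsewhere. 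By the notion of convergence adopted in the statement, it then suffices to prove that the composition operators $C_{h_{\Gamma_n}}$ converge strongly to $C_{h_{\Gamma_0}}$ on $B_2(\mathbb S)$.

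By Theorem \ref{strong0}, this strong convergence holds as soon as $h_{\Gamma_n} \to h_{\Gamma_0}$ in the universal Teich\-m\"ul\-ler space $T$. This is precisely where the simplification over the preceding theorem occurs: Theorem \ref{strong0} requires convergence only in $T$, not in $T_2$, which is exactly why $\gamma$ may be an arbitrary quasicircle and $[\nu_n]$ may range over the full space $T(R)$ rather than $T_2(R)$. To produce this convergence I would track the Beltrami coefficients. The cut-and-extend assignment $\nu \mapsto \tilde\nu=\sigma_*(\nu|_U)$ (extended by $0$) is continuous from $M(R)$ into $M(\mathbb D) \times M(\mathbb D^*)$, by restriction of $\tilde\nu$ to the two sides of $\mathbb S$; this is the elementary half of Lemma \ref{L}, needing only the sup-norm and none of the $p$-integrability estimates. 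Writing $\tilde\nu_n=(\mu_{1,n},\mu_{2,n})$, the planar computation behind Proposition \ref{compd} and Proposition \ref{double} expresses $h_{\Gamma_n}$ as a composition of the quasisymmetric homeomorphisms induced by $\mu_{1,n}$ and $\mu_{2,n}$ together with the group operations of $T$. Since $T$ is a topological group and the Teich\-m\"ul\-ler projections $M(\mathbb D) \to T$ and $M(\mathbb D^*) \to T$ are continuous in the sup-norm, the map $\tilde\nu \mapsto h_{\Gamma}$ is continuous into $T$.

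It remains to feed convergent representatives into this continuous chain. Because $R$ is compact, $T(R)$ is finite-dimensional and the Teich\-m\"ul\-ler projection $M(R) \to T(R)$ admits continuous local sections; choosing representatives accordingly, the hypothesis $[\nu_n] \to [\nu_0]$ in $T(R)$ furnishes $\nu_n \to \nu_0$ in the sup-norm on $M(R)$, hence $\tilde\nu_n \to \tilde\nu_0$ in the sup-norm, and therefore $h_{\Gamma_n} \to h_{\Gamma_0}$ in $T$ by the previous paragraph. Applying Theorem \ref{strong0} then gives the strong convergence $C_{h_{\Gamma_n}} \to C_{h_{\Gamma_0}}$ on $B_2(\mathbb S)$, which together with the factorization of the first paragraph completes the proof.

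I expect the single genuine obstacle to be the appeal to Theorem \ref{strong0}: everything else is a transcription of the $T_p$ argument with the sup-norm in place of the $p$-norm, but the strong (rather than norm) convergence of composition operators on $B_2(\mathbb S)$ under mere convergence of the welding homeomorphisms in $T$ is the delicate analytic input, and the proof is confined to $p=2$ precisely because no analogue of Theorem \ref{strong0} is known for general $p$. A secondary point requiring care is the passage from convergence of classes in $T(R)$ to sup-norm convergence of representatives, so that the weldings $h_{\Gamma_n}$ attached to the chosen curves $\gamma_n=g^{\nu_n}(\gamma)$ are indeed the ones that converge.
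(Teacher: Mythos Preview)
Your proposal is correct and follows essentially the same approach as the paper, which gives only a one-sentence proof: replace Proposition \ref{strong} by Theorem \ref{strong0} in the argument for the preceding theorem. You have spelled out the details that the paper leaves implicit, in particular the passage from convergence in $T(R)$ to sup-norm convergence of representatives via continuous local sections, and the sup-norm (rather than $p$-norm) continuity of the cut-and-extend map $Q$; these are exactly the adaptations needed, and your identification of Theorem \ref{strong0} as the decisive analytic input confined to $p=2$ matches the paper's reasoning.
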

\smallskip

In this case, the convergence $\lim_{n \to \infty} \Vert \Theta_{\gamma_n} \Vert=\Vert \Theta_{\gamma_0} \Vert$ is
shown in \cite[Theorem 5.7]{Shiga}.

\end{document}